\documentclass[12pt]{article}
\oddsidemargin 0 mm
\topmargin -10 mm
\headheight 0 mm
\headsep 0 mm 
\textheight 246.2 mm
\textwidth 159.2 mm
\footskip 9 mm
\setlength{\parindent}{0pt}
\setlength{\parskip}{5pt plus 2pt minus 1pt}
\pagestyle{plain}
\usepackage{color}
\usepackage{amssymb}
\usepackage{amsthm}
\usepackage{amsmath}
\usepackage[all]{xy}
\usepackage{graphicx}
\usepackage{enumerate}

\newcommand{\mb}[1]{\mathbf{ #1}}
\newcommand{\mc}[1]{\mathcal{ #1}}



\newcommand{\alga}{\mathbf A}

\DeclareMathOperator{\DM}{DM}
\DeclareMathOperator{\BDM}{\mathbf{DM}}

\newtheorem{theorem}{Theorem}[section]
\newtheorem{definition}[theorem]{Definition}
\newtheorem{lemma}[theorem]{Lemma}
\newtheorem{proposition}[theorem]{Proposition}
\newtheorem{remark}[theorem]{Remark}
\newtheorem{example}[theorem]{Example}
\newtheorem{corollary}[theorem]{Corollary}
\title{Algebraic properties of paraorthomodular posets}
\author{Ivan~Chajda, Davide~Fazio, Helmut~L\"anger, Antonio~Ledda and \\
Jan~Paseka}
\date{}
\begin{document}

\maketitle

\begin{abstract}Paraorthomodular posets are bounded partially ordered set with an antitone involution induced by quantum structures arising from the logico-algebraic approach to quantum mechanics. The aim of the present work is starting a systematic inquiry into paraorthomodular posets theory both from an algebraic and order-theoretic perspective. On the one hand, we show that paraorthomodular posets are amenable of an algebraic treatment by means of a smooth representation in terms of bounded directoids with antitone involution. On the other, we investigate their order-theoretical features in terms of forbidden configurations. Moreover, sufficient and necessary conditions characterizing bounded posets with an antitone involution whose Dedekind-MacNeille completion is paraorthomodular are provided.
\end{abstract}

{\bf AMS Subject Classification:} 06A11, 06C15, 06B23, 06B75, 08B99, 03G12, 03G25.

{\bf Keywords:} poset with an antitone involution, orthomodular lattice, orthomodular poset, paraorthomodular lattice, paraorthomodular poset, orthoalgebra, effect algebra, commutative directoid, D-continuous poset, Dedekind-MacNeille completion.

\section{Introduction}
It is well known (see e.g.\ \cite{GLP16}) that any physical theory determines a class of event-state systems $(\mc E,\mc S)$, where $\mc E$ contains the events that may occur relatively to a given system and $\mc S$ contains the states that such a physical system may assume. In quantum theory, it is customary to identify $\mc E$ with the set $\Pi (\mc H)$ of projection operators of a Hilbert space $\mc H$, which is in bijective correspondence with the set $\mc C(\mc H)$ of closed subspaces of $\mc H$. Moreover, if we order $\mc C(\mc H)$ by set inclusion, then it can be made into the universe of an \emph{orthomodular lattice} (OML). Orthomodular lattices were introduced in 1936 by G. Birkhoff and J. von Neumann \cite{BVN01}, and independently by  K.~Husimi \cite{Hus}, as a suitable algebraic tool for investigating the logical structure underlying physical theories which, like quantum mechanics, do not conform to classical logic.
However, later on it was recognized that this framework needed to be revised. In fact, joins (i.e.\ disjunctions in the corresponding logic) need not exist provided the elements in question are not orthogonal. For this reason, \emph{orthomodular posets} were defined with the aim of faithfully formalising event structures of quantum mechanical systems (see e.g.\ \cite{Be85, Ka83} for a detailed account). Since the early forties, a large amount of investigations on the algebraic and order theoretical properties of orthomodular posets and, later, \emph{orthoalgebras} (see below and e.g.\ \cite{FGR92}), has become the core subject of investigations on \emph{sharp quantum structures} \cite{DGG}. However, in spite of the importance and depth of results in this field, some doubts concerning the relevance of OML's  for the foundation of quantum mechanics arose when it was discovered that the lattices of projection operators on Hilbert spaces do not generate the whole variety of orthomodular lattices (see e.g.\ \cite{DGG}). This shows that there are equational properties of event-state systems that cannot be captured by the proposed mathematical abstraction. Hence, alternative approaches recently appeared. Let us mention e.g.\ \emph{weakly orthomodular} and \emph{dually weakly orthomodular} lattices \cite{CL01}, and \emph{pseudo-orthomodular} posets \cite{CLRepo, CLP21}. 

A different abstract counterpart of the set of quantum events has been suggested within the  \emph{unsharp} approach to quantum theory by effect algebras \cite{FoBe,GG01} and quantum MV algebras \cite{G01}. These structures ensure the algebraic treatment of quantum effects, namely bounded self-adjoint linear operators on a Hilbert space satisfying the Born's rule. Effect algebras framework is indeed very general: any orthoalgebra, and so any orthomodular poset (lattice) is an example thereof. However, since the \emph{canonical} order (CO) induced by the Born's rule (see \cite[Chapter 4]{DGG}) on the set $\mc E(\mc H)$ of effects on a Hilbert space $\mc H$ does not ensure that meets and joins of elements always exist (see e.g.\ \cite[Exercise 17, p.124]{dvurec}), effect algebras do not lend themselves to easy logico-algebraic  investigations. Therefore, in \cite{GLP16,GLP17} \emph{paraorthomodular lattices}, i.e.\ regular bounded lattices with an antitone involution $'$ satisfying the paraorthomodular condition 
\[
x\leq y\quad\text{and}\quad x'\land y=0\quad \Rightarrow\quad x=y
\]
were introduced. They abstract a natural generalization of the lattice ordering on closed subspaces of a Hilbert space to the whole class of effects by means of the Olson \emph{spectral} order $\leq_{s}$ \cite{Ol01}. This ordering properly extends (CO), since e.g.\ the condition $$E\leq_{s} F\ \mbox{iff}\ F-E\geq_{s} \mathbb{O}$$ need not be satisfied (see \cite{Ol01}). 
Although their ``concrete'' representatives enjoy several properties which are not shared by effect algebras, paraorthomodular lattices are nevertheless capable of capturing an essential cross-cutting trait of quantum structures. Indeed, as it will be clear in the sequel, if the notion of paraorthomodular lattice is relaxed by not requiring that the underlying bounded poset with antitone involution is either a lattice or regular, then the derived concept of \emph{paraorthomodular poset} frames quantum structures into a common unifying picture. Any effect algebra determines a bounded poset with antitone involution, which is paraorthomodular in the above sense.

The aim of the present work is starting a systematic inquiry into paraorthomodular posets from a dual perspective. On the one hand, we show that paraorthomodular posets are amenable of an algebraic tratment by means of a smooth representation in terms of bounded directoids with antitone involution. On the other, we investigate their order-theoretical features in terms of forbidden configurations. Moreover, sufficient and necessary conditions characterizing bounded posets with an antitone involution whose Dedekind-MacNeille completion is paraorthomodular are provided. These results generalize well known achievements in the framework of completions of quantum structures \cite{Finch, Fi70a}.

The paper is organized as follows. In Section \ref{sec:1}, after dispatching all the notions needed in the sequel, we characterize paraorthomodular posets in terms of forbidden configurations. Moreover, we show that any effect algebra induces a bounded poset with antitone involution which is paraorthomodular. In Section \ref{sec:3}, we introduce the class of paraorthomodular directoids as the algebraic counterpart of paraorthomodular posets.  It will turn out that paraorthomodular directoids  form a proper super quasi-variety of paraorthomodular lattices. Furthermore, we show that sharp quantum structures, in particular orthoalgebras, can be represented as a proper subvariety of paraorthomodular directoids. This result extends the application of techniques developed in \cite{CK}. Finally, in Section \ref{sec:4} we investigate bounded posets with an antitone involution whose Dedekind-MacNeille completion are paraorthomodular lattices. 
\section{Paraorthomodular posets}\label{sec:1}

In this section we firstly recall several known but useful concepts.

A {\em paraorthomodular lattice} is a bounded lattice $\mathbf L=(L,\vee,\wedge,{}',0,1)$ with an antitone involution $'$ satisfying for any $x,y\in L$ the condition
\begin{enumerate}
\item[(P)] $x\leq y$ and $x'\wedge y=0$ imply $x=y$.
\end{enumerate}
Of course, every orthomodular lattice is paraorthomodular. In fact, if the antitone involution $'$ is a complementation, i.e.\ if $\mathbf L$ is an ortholattice, then $\mathbf L$ is orthomodular if and only if it satisfies condition (P). The matter is that in general $'$ need not be a complementation.

Let $\mathbf P=(P,\leq)$ be a poset, $a,b\in P$ and $A,B\subseteq P$. We define the lower and upper cone of $A$ as follows:
\begin{align*}
L(A) & :=\{x\in P\mid x\leq A\}, \\
U(A) & :=\{x\in P\mid x\geq A\}.
\end{align*}
Here $x\leq A$ means $x\leq y$ for all $y\in A$ and, similarly, $x\geq A$ means $x\geq y$ for all $y\in A$. The expression $A\leq B$ means $x\leq y$ for all $x\in A$ and $y\in B$. Instead of $L(\{a\})$, $L(\{a,b\})$ and $L(U(A))$ we simply write $L(a)$, $L(a,b)$ and $LU(A)$, respectively. Analogously, we proceed in similar cases.

The {\em poset} $\mathbf P$ is called {\em distributive} if it satisfies the LU-identity
\[
L(U(x,y),z)\approx LU(L(x,z),L(y,z))
\]
and {\em modular} if
\[
L(U(x,y),z)=LU(x,L(y,z))
\]
for all $x,y,z\in P$ with $x\leq z$.

Now we define our main concept.

\begin{definition}
A {\em paraorthomodular poset} is a bounded poset $\mathbf P=(P,\leq,{}',0,1)$ with an antitone involution $'$ satisfying for all $x,y\in P$ the condition
\begin{enumerate}
\item[{\rm(P*)}] $x\leq y$ and $L(x',y)=\{0\}$ imply $x=y$.
\end{enumerate}
\end{definition}
In the sequel, we will denote by $\mc P$ the class of paraorthomodular posets.	\\
Apparently, if $\mathbf P$ is a lattice then (P) is equivalent to (P*), thus both concepts coincide. On the other hand, there exist paraorthomodular posets which are not lattices.

It is easy to see that for a poset $(P,\leq,0)$ with bottom element $0$ and for $a,b\in P$ the assertions $L(a,b)=\{0\}$ and $a\wedge b=0$ are equivalent and that (P*) is equivalent to
\[
x<y\text{ implies }L(x',y)\neq\{0\}.
\]
Moreover, it is clear that if a poset $\mathbf P$ with an antitone involution is constructed from a bounded poset with an antitone involution by adding a new bottom element and a new top element, then $\mathbf P$ is paraorthomodular.

\begin{example}
\
\begin{enumerate}[{\rm(a)}]
\item The poset visualized in Fig.~1

\vspace*{-2mm}

\begin{center}
\setlength{\unitlength}{7mm}
\begin{picture}(6,8)
\put(3,1){\circle*{.3}}
\put(1,3){\circle*{.3}}
\put(3,3){\circle*{.3}}
\put(5,3){\circle*{.3}}
\put(1,5){\circle*{.3}}
\put(3,5){\circle*{.3}}
\put(5,5){\circle*{.3}}
\put(3,7){\circle*{.3}}
\put(3,1){\line(-1,1)2}
\put(3,1){\line(0,1)6}
\put(3,1){\line(1,1)2}
\put(1,3){\line(0,1)2}
\put(1,3){\line(1,1)2}
\put(1,3){\line(2,1)4}
\put(3,3){\line(-1,1)2}
\put(3,3){\line(1,1)2}
\put(5,3){\line(-2,1)4}
\put(5,3){\line(-1,1)2}
\put(5,3){\line(0,1)2}
\put(3,7){\line(-1,-1)2}
\put(3,7){\line(1,-1)2}
\put(2.85,.25){$0$} 
\put(.3,2.85){$a$}
\put(3.4,2.85){$b$}
\put(5.4,2.85){$c$}
\put(.3,4.85){$c'$}
\put(3.4,4.85){$b'$}
\put(5.4,4.85){$a'$}
\put(2.3,7.4){$1=0'$}
\put(2.2,-.75){{\rm Fig.\ 1}}
\end{picture}
\end{center}

\vspace*{4mm}

is paraorthomodular and modular, but not distributive since
\[
L(U(a,b),c)=L(c,a',b',c',1)=\{0,c\}\neq\{0\}=LU(0,0)=LU(L(a,c),L(b,c)).
\]
\item The poset visualized in Fig.~2

\vspace*{-2mm}

\begin{center}
\setlength{\unitlength}{7mm}
\begin{picture}(4,8)
\put(2,1){\circle*{.3}}
\put(1,3){\circle*{.3}}
\put(3,3){\circle*{.3}}
\put(1,5){\circle*{.3}}
\put(3,5){\circle*{.3}}
\put(2,7){\circle*{.3}}
\put(2,1){\line(-1,2)1}
\put(2,1){\line(1,2)1}
\put(1,3){\line(0,1)2}
\put(1,3){\line(1,1)2}
\put(3,3){\line(-1,1)2}
\put(3,3){\line(0,1)2}
\put(2,7){\line(-1,-2)1}
\put(2,7){\line(1,-2)1}
\put(1.85,.25){$0$}
\put(.3,2.85){$a$}
\put(3.4,2.85){$b$}
\put(.3,4.85){$b'$}
\put(3.4,4.85){$a'$}
\put(1.3,7.4){$1=0'$}
\put(1.2,-.75){{\rm Fig.\ 2}}
\end{picture}
\end{center}

\vspace*{4mm}

is paraorthomodular and distributive.
\item The poset visualized in Fig.~3

\vspace*{-2mm}

\begin{center}
\setlength{\unitlength}{7mm}
\begin{picture}(8,8)
\put(4,1){\circle*{.3}}
\put(1,3){\circle*{.3}}
\put(3,3){\circle*{.3}}
\put(5,3){\circle*{.3}}
\put(7,3){\circle*{.3}}
\put(1,5){\circle*{.3}}
\put(3,5){\circle*{.3}}
\put(5,5){\circle*{.3}}
\put(7,5){\circle*{.3}}
\put(4,7){\circle*{.3}}
\put(4,1){\line(-3,2)3}
\put(4,1){\line(-1,2)1}
\put(4,1){\line(1,2)1}
\put(4,1){\line(3,2)3}
\put(1,3){\line(0,1)2}
\put(1,3){\line(1,1)2}
\put(1,3){\line(2,1)4}
\put(3,3){\line(-1,1)2}
\put(3,3){\line(2,1)4}
\put(5,3){\line(-2,1)4}
\put(5,3){\line(1,1)2}
\put(7,3){\line(-2,1)4}
\put(7,3){\line(-1,1)2}
\put(7,3){\line(0,1)2}
\put(4,7){\line(-3,-2)3}
\put(4,7){\line(-1,-2)1}
\put(4,7){\line(1,-2)1}
\put(4,7){\line(3,-2)3}
\put(3.85,.25){$0$}
\put(.3,2.85){$a$}
\put(2.3,2.85){$b$}
\put(5.4,2.85){$c$}
\put(7.4,2.85){$d$}
\put(.3,4.85){$d'$}
\put(2.3,4.85){$c'$}
\put(5.4,4.85){$b'$}
\put(7.4,4.85){$a'$}
\put(3.3,7.4){$1=0'$}
\put(3.2,-.75){{\rm Fig.\ 3}}
\end{picture}
\end{center}

\vspace*{4mm}

is paraorthomodular, but not modular since $a\leq c'$, but
\begin{align*}
L(U(a,a'),c') & =L(1,c')=L(c')=\{0,a,d,c'\}\neq\{0,a,d\}=L(b',c',1)=LU(a,0,d)= \\
              & =LU(a,L(a',c')).
\end{align*}
\end{enumerate}
\end{example}

A forbidden configurations characterization of paraorthomodular posets comes next. 

The concept of a strong subposet was introduced by the first author and J.~Rach\r unek in \cite{CR}. Now we extend it for bounded posets with involution as follows.

\begin{definition}
A {\em strong subset} of a bounded poset $\mathbf P=(P,\leq,{}',0,1)$ with an antitone involution is a subset $A$ of $P$ containing $0$ and $1$ such that $A'\subseteq A$ and for all $x,y\in A$ the following hold:
\begin{itemize}
\item $U_{\mathbf P}L_{\mathbf A}(x,y)=U_{\mathbf P}L_{\mathbf P}(x,y)$,
\item $L_{\mathbf P}U_{\mathbf A}(x,y)=L_{\mathbf P}U_{\mathbf P}(x,y)$.
\end{itemize}
\end{definition}

That this concept is not curious is witnessed by the following result. It shows that strong subsets of a bounded poset with an antitone involution play a similar role as subsets do for lattices.

\begin{lemma}\label{lem:eux-forb conf}
Let $\mathbf P=(P,\leq,{}',0,1)$ be a bounded poset with an antitone involution $'$. If  $\mathbf M$ is a bounded subposet of $\mathbf P$ closed under\/ $'$ and $\mathbf M$ is a lattice, then $\mathbf M$ is a strong subset of $\mathbf P$.
\end{lemma}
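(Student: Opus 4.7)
The plan is to check the four defining conditions of a strong subset in turn. The inclusions $\{0,1\}\subseteq M$ and $M'\subseteq M$ are immediate from the hypotheses that $\mathbf M$ is a bounded subposet and is closed under $'$, so the real content is only in the two LU-cone identities.

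Fix $x,y\in M$ and set $m:=x\wedge y$ and $j:=x\vee y$, computed in the lattice $\mathbf M$. The key observation I would record is that, because $\mathbf M$ is a lattice sitting as a subposet inside $\mathbf P$, the elements $m$ and $j$ are not merely the $\mathbf M$-meet and $\mathbf M$-join but realize the infimum and supremum of $\{x,y\}$ in the ambient $\mathbf P$ as well; this is the only place where the lattice hypothesis is used. From it one gets $L_{\mathbf P}(x,y)=L_{\mathbf P}(m)$ and $U_{\mathbf P}(x,y)=U_{\mathbf P}(j)$, so the elementary identity $UL(a)=U(a)$, valid for any single element of any poset, yields
\[
U_{\mathbf P}L_{\mathbf P}(x,y)=U_{\mathbf P}L_{\mathbf P}(m)=U_{\mathbf P}(m).
\]

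For the companion computation, I would note that $m\in L_{\mathbf M}(x,y)$ and that every $z\in L_{\mathbf M}(x,y)$ is a common lower bound of $\{x,y\}$ in $\mathbf P$, hence satisfies $z\le m$. Consequently $m$ is the greatest element of $L_{\mathbf M}(x,y)$ and $U_{\mathbf P}L_{\mathbf M}(x,y)=U_{\mathbf P}(m)$. Combining these gives $U_{\mathbf P}L_{\mathbf M}(x,y)=U_{\mathbf P}L_{\mathbf P}(x,y)$. The second identity $L_{\mathbf P}U_{\mathbf M}(x,y)=L_{\mathbf P}U_{\mathbf P}(x,y)$ is obtained by the perfectly symmetric argument with $j$ in place of $m$, using the dual identity $LU(a)=L(a)$; closure of $M$ under $'$ guarantees that the dual hypothesis on $\mathbf M$ is as clean as the original one.

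The main delicate point I anticipate is interpretative rather than technical: the argument really rests on reading ``$\mathbf M$ is a lattice'' as ``the meet and join in $\mathbf M$ coincide with the (then existing) infimum and supremum in $\mathbf P$''. Without this coincidence, extra lower bounds of $\{x,y\}$ sitting in $P\setminus M$ could exceed $x\wedge y$ and spoil the inclusion $U_{\mathbf P}(m)\subseteq U_{\mathbf P}L_{\mathbf P}(x,y)$. Once this reading is fixed, no further obstacle appears, and the whole proof is essentially two applications of $UL(a)=U(a)$, one for each cone identity.
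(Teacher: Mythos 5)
Your proof is correct and follows essentially the same route as the paper's: both rest on the observation that the meet and join computed in $\mathbf M$ coincide with the infimum and supremum of $\{x,y\}$ in $\mathbf P$, and then reduce each cone identity to the single-element facts $U_{\mathbf P}L_{\mathbf P}(a)=U_{\mathbf P}(a)$ and $L_{\mathbf P}U_{\mathbf P}(a)=L_{\mathbf P}(a)$. You are in fact more explicit than the paper (which asserts the coincidence of meets and joins with a bare ``clearly'') in flagging that this coincidence is exactly where the hypothesis must be read as ``$\mathbf M$ is a sublattice of $\mathbf P$'' rather than merely ``a subposet that happens to be a lattice''.
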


\begin{proof}
Clearly, for any $x,y\in M$,
\begin{align*}
 x\lor^{\mathbf M}y & =x\lor^{\mathbf P}y, \\
x\land^{\mathbf M}y & =x\land^{\mathbf P}y.
\end{align*}
Therefore,
\begin{align*}
L_{\mathbf P}U_{\mathbf M}(x,y) & =L_{\mathbf P}U_{\mathbf M}(x\lor y)=L_{\mathbf  P}(x\lor y)=L_{\mathbf P}U_{\mathbf P}(x\lor y)=L_{\mathbf P}U_{\mathbf P}(x,y), \\
U_{\mathbf P}L_{\mathbf M}(x,y) & =U_{\mathbf P}L_{\mathbf M}(x\land y)=U_{\mathbf P}(x\land y)=U_{\mathbf  P}L_{\mathbf  P}(x\land y)=U_{\mathbf  P}L_{\mathbf  P}(x,y).
\end{align*}
\end{proof}

\begin{theorem}
Let $\mathbf  P=(P,\leq,{}',0,1)$ be a bounded poset with an antitone involution. Then $\mathbf  P$ is not paraorthomodular if and only if it contains a strong subposet $\mathbf  M$ ortho-isomorphic to $\mathbf B_6$ which is depicted in Fig.~4

\vspace*{-2mm}

\begin{center}
\setlength{\unitlength}{7mm}
\begin{picture}(4,8)
\put(2,1){\circle*{.3}}
\put(1,3){\circle*{.3}}
\put(3,3){\circle*{.3}}
\put(1,5){\circle*{.3}}
\put(3,5){\circle*{.3}}
\put(2,7){\circle*{.3}}
\put(2,1){\line(-1,2)1}
\put(2,1){\line(1,2)1}
\put(1,3){\line(0,1)2}
\put(3,3){\line(0,1)2}
\put(2,7){\line(-1,-2)1}
\put(2,7){\line(1,-2)1}
\put(1.85,.25){$0$}
\put(.3,2.85){$b'$}
\put(3.4,2.85){$a$}
\put(.3,4.85){$a'$}
\put(3.4,4.85){$b$}
\put(1.3,7.4){$1=0'$}
\put(1.2,-.75){{\rm Fig.\ 4}}
\end{picture}
\end{center}

\vspace*{4mm}


\end{theorem}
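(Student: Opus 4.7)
The plan is to prove both implications by unwinding the definitions, with the construction used in the $\Rightarrow$ direction being the natural inverse of the witness extracted in the $\Leftarrow$ direction.

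\emph{Sufficiency $(\Leftarrow)$.} Assume $\mathbf M$ is a strong subposet of $\mathbf P$ ortho-isomorphic to $\mathbf B_6$, with labels as in Fig.~4. Then $a<b$ in $\mathbf P$, and $L_{\mathbf M}(a',b)=\{0\}$. Since $U_{\mathbf P}(0)=P$, the strong subset identity $U_{\mathbf P}L_{\mathbf M}(a',b)=U_{\mathbf P}L_{\mathbf P}(a',b)$ forces $U_{\mathbf P}L_{\mathbf P}(a',b)=P$; in particular $0$ must be an upper bound of $L_{\mathbf P}(a',b)$, so $L_{\mathbf P}(a',b)=\{0\}$. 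Thus (P*) fails at $(a,b)$.

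\emph{Necessity $(\Rightarrow)$.} Suppose $\mathbf P$ is not paraorthomodular and pick $x<y$ in $P$ with $L_{\mathbf P}(x',y)=\{0\}$. Define
\[
M:=\{0,\,x,\,y',\,y,\,x',\,1\}
\]
with the intended ortho-isomorphism $x\mapsto a$, $y\mapsto b$, $x'\mapsto a'$, $y'\mapsto b'$. Three things remain to check:
\begin{enumerate}[{\rm(i)}]
\item $|M|=6$ and the order induced on $M$ consists exactly of the two chains $0<x<y<1$ and $0<y'<x'<1$;
\item $M$ is closed under $'$ (immediate from involutivity);
\item the two strong subset identities hold for every pair $(u,v)\in M\times M$.
\end{enumerate}

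Part (i) is a case analysis: were any of the listed elements to coincide, or were any of the four pairs $(x,x')$, $(x,y')$, $(y,x')$, $(y,y')$ to be comparable, one could exhibit a nonzero element of $L_{\mathbf P}(x',y)$, contradicting the hypothesis; antitonicity of $'$ converts each such contradiction into a statement about $x<y$ or the induced $y'<x'$.

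For Part (iii), the identities are automatic on comparable pairs because $L(u,v)=L(\min\{u,v\})$ and $UL(u)=U(u)$, together with the dual facts on $U$. For each of the four incomparable pairs the crux is to show $L_{\mathbf P}(u,v)=\{0\}$, and this always reduces to $L_{\mathbf P}(x',y)=\{0\}$: if $z\leq u$ and $z\leq v$, then the chain relations $x<y$ and $y'<x'$ propagate to yield $z\leq x'$ and $z\leq y$. The companion equality $U_{\mathbf P}(u,v)=\{1\}$ then follows for free from the involutive identity $U_{\mathbf P}(a,b)=\bigl(L_{\mathbf P}(a',b')\bigr)'$, and the two strong subset conditions drop out.

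The only real obstacle is the bookkeeping in (i) together with the first half of (iii); once one observes that both rest on the single principle ``any common lower bound of an incomparable pair in $M$ lies in $L_{\mathbf P}(x',y)$,'' all cases collapse to routine one-line verifications.
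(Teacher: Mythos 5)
Your proof is correct and follows essentially the same route as the paper's: both directions rest on the six-element witness $\{0,a,b,a',b',1\}$ and on applying the strong-subset identities to the incomparable pairs. The only difference is one of detail: you verify the strong-subset conditions and the distinctness/incomparability of the six elements by hand, whereas the paper delegates the former to its lemma that any bounded, involution-closed subposet which is a lattice is a strong subset, and leaves the latter checks implicit.
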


\begin{proof}
Concerning the right-to-left direction, observe that $a\leq^{\mathbf P}b$ and
\[
L_{\mathbf P} (a',b)=L_{\mathbf P}U_{\mathbf P}L_{\mathbf P}(a',b)=L_{\mathbf P}U_{\mathbf P}L_{\mathbf M}(a',b)=L_{\mathbf P}U_{\mathbf P}(0)=\{0\},
\]
but $b\neq a$. As regards the left-to-right direction, suppose that $\mathbf P$ is not paraorthomodular. Then there exist $a,b\in P$ such that $a\leq b$ and $L(a',b)=\{0\}$, but $b\neq a$. Consider the subposet $\mathbf M:=(\{a,b,a',b',0,1\},\leq,{}',0,1)$ with the order inherited from $\mathbf P$. Note that $\mathbf M$ is a bounded sublattice of $\mathbf P$ with an antitone involution and, by Lemma~\ref{lem:eux-forb conf}, it is also a strong subset of $\mathbf P$.
\end{proof}

In the last part of this section, we show that paraorthomodular posets framework is indeed capable of setting quantum structures arising in the sharp and unsharp approach to quantum theory into a common unified picture.

Recall that an {\em orthomodular poset} is a bounded poset $(P,\leq,{}',0,1)$ with an antitone involution $'$ satisfying the following conditions for all $x,y\in P$:
\begin{enumerate}
\item[(1)] The unary operation $'$ is a complementation,
\item[(2)] $x\leq y'$ implies that $x\vee y$ exists,
\item[(3)] $x\leq y$ implies $y=x\vee(y\wedge x')$ ({\em orthomodular law}).
\end{enumerate}
Using (2) one can easily see that the expressions in (3) are well-defined, i.e.\ $y\wedge x'$ as well as $x\vee(y\wedge x')$ exist.

\begin{lemma}\label{lem: ompparaorthomod}
Every orthomodular poset is paraorthomodular.
\end{lemma}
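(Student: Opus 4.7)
The plan is to unwind the definitions directly: starting from the hypotheses $x \leq y$ and $L(x',y) = \{0\}$, I want to conclude $x = y$ using the orthomodular law of axiom~(3). The key observation is that the element $y \wedge x'$ appearing in the orthomodular law is, by its very construction, a lower bound of both $y$ and $x'$, hence must lie in $L(x',y)$. The bottom-element hypothesis then forces it to be $0$, and the orthomodular law collapses to $y = x$.

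More concretely, the steps I would carry out are the following. First, invoke axiom~(2) (as the authors point out just before the lemma) to guarantee that $y \wedge x'$ and $x \vee (y \wedge x')$ exist, so that the orthomodular law (3) is indeed applicable and gives
\[
y = x \vee (y \wedge x').
\]
Second, observe that $y \wedge x' \leq y$ and $y \wedge x' \leq x'$, so $y \wedge x' \in L(x',y) = \{0\}$, whence $y \wedge x' = 0$. Substituting back into the displayed identity yields $y = x \vee 0 = x$, which is exactly condition (P*).

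There is essentially no obstacle here beyond double-checking that $y \wedge x'$ really is defined in the orthomodular poset setting. Since $x \leq y$ gives $y' \leq x'$, axiom (2) applied to the pair $(y', x')$ (with $y' \leq (x')' = x$... actually $y' \leq x'$ and we want $x \vee y'$ or similar) ensures that the relevant join exists, and then its complement gives the meet $y \wedge x'$; this is precisely the well-definedness comment the authors make after listing axioms (1)--(3), so I would simply cite that remark rather than reprove it. The whole argument is a short, two-line deduction once the orthomodular law is unpacked.
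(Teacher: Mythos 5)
Your proof is correct and is essentially the same as the paper's: both establish that $x'\wedge y$ exists (via the remark following axioms (1)--(3)), note that it lies in $L(x',y)=\{0\}$ and hence equals $0$, and then apply the orthomodular law to get $x=x\vee 0=x\vee(y\wedge x')=y$. Nothing further is needed.
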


\begin{proof}
If $(P,\leq,{}',0,1)$ is an orthomodular poset, $a,b\in P$, $a\leq b$ and $L(a',b)=\{0\}$ then $a'\wedge b$ exists and $a'\wedge b=0$ and using the orthomodular law we conclude
\[
a=a\vee0=a\vee(b\wedge a')=b.
\]
\end{proof}

Another interesting class of paraorthomodular posets are the so-called pseudo-or\-tho\-mod\-u\-lar ones, see \cite{CLRepo}.

A {\em pseudo-orthomodular poset} (\cite{CLRepo}) is a bounded poset $(P,\leq,{}',0,1)$ with an antitone involution $'$ which is a complementation satisfying one of the following equivalent conditions:
\begin{align*}
L(U(L(x,y),y'),y) & \approx L(x,y), \\
U(L(U(x,y),y'),y) & \approx U(x,y).
\end{align*}

\begin{proposition}\label{propseudo} 
Let $\mathbf P$ be a pseudo-orthomodular poset. Then $\mathbf P$ is paraorthomodular.
\end{proposition}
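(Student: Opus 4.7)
The plan is to reduce condition (P*) to the pseudo-orthomodular identity via the antitone involution duality. Assume $x,y\in P$ with $x\leq y$ and $L(x',y)=\{0\}$; the goal will be to show $L(y)=L(x)$, which in a poset forces $x=y$.

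First I will observe the duality $L(x',y)=\{0\}$ if and only if $U(x,y')=\{1\}$. Indeed, since $'$ is an antitone involution, the map $z\mapsto z'$ is an order-reversing bijection of $P$ sending $0$ to $1$, and $z\leq x'$ together with $z\leq y$ is equivalent to $z'\geq x$ together with $z'\geq y'$. Hence $z\mapsto z'$ restricts to a bijection from $L(x',y)$ onto $U(x,y')$, so the former equals $\{0\}$ precisely when the latter equals $\{1\}$.

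Next I will specialise the pseudo-orthomodular identity $L(U(L(x,y),y'),y)\approx L(x,y)$. Since $x\leq y$, $L(x,y)=L(x)$; moreover, the routine identity $UL(x)=U(x)$ yields $U(L(x),y')=U(x)\cap U(y')=U(x,y')$. Thus the identity specialises to $L(U(x,y'),y)=L(x)$. Combining this with the duality step $U(x,y')=\{1\}$, I get $L(x)=L(U(x,y'),y)=L(1,y)=L(y)$, and hence $x=y$.

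I do not anticipate any serious obstacle. The essential point is to recognise that the hypothesis $L(x',y)=\{0\}$ translates, via the antitone involution, into $U(x,y')=\{1\}$, which is precisely the ingredient needed to collapse the pseudo-orthomodular identity to $L(y)=L(x)$; the remaining manipulations with $L$- and $U$-cones are routine.
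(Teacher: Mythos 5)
Your proof is correct and takes essentially the same approach as the paper: both arguments simply specialise the defining pseudo-orthomodular identity under the hypotheses $x\leq y$ and $L(x',y)=\{0\}$ to collapse the cones of $x$ and $y$. The only difference is that the paper works with the $U$-form $U(L(U(x,y),y'),y)\approx U(x,y)$ and uses the hypothesis directly to get $U(a)=U(b)$, whereas you use the $L$-form and therefore need the (correct) extra observation that the antitone involution turns $L(x',y)=\{0\}$ into $U(x,y')=\{1\}$ before concluding $L(x)=L(y)$.
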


\begin{proof}
If $\mathbf P=(P,\leq,{}',0,1)$, $a,b\in P$, $a\leq b$ and $L(a',b)=\{0\}$ then
\[
U(a)=U(0,a)=U(L(b,a'),a)=U(L(U(b),a'),a)=U(L(U(b,a),a'),a)=U(b,a)=U(b)
\]
and hence $a=b$.
\end{proof}

We continue with the definition of an effect algebra (see \cite{FoBe} and \cite{dpos}). For an extensive account, the reader is referred to Dvure\v censkij and Pulmannov\'a's monograph \cite{dvurec}.

\begin{definition}\label{defea}
An {\em effect algebra} is a partial algebra $\mathbf E=(E,\oplus,0,1)$ of type $(2,0,0)$ satisfying conditions {\rm(E1)} -- {\rm(E4)} for all $x,y,z\in E$:
\begin{enumerate}[{\rm(E1)}]
\item If $x\oplus y$ exists, so does $y\oplus x$ and $x\oplus y=y\oplus x$,
\item if $x\oplus y$ and $(x\oplus y)\oplus z$ exist, so do $y\oplus z$ and $x\oplus(y\oplus z)$ and $(x\oplus y)\oplus z=x\oplus(y\oplus z)$,
\item there exists a unique $x'\in E$ such that $x\oplus x'$ is defined and $x\oplus x'=1$,
\item If $x\oplus1$ exists then $x=0$.
\end{enumerate}
\end{definition}

Since $'$ is a unary operation on $E$ it can be regarded as a further fundamental operation. Hence in the following we will write $\mathbf E=(E,\oplus,{}',0,1)$ instead of $\mathbf E=(E,\oplus,0,1)$.

Let $\mathbf E=(E,\oplus,{}',0,1)$ be an effect algebra and $a,b\in E$. The following facts are well-known:
\begin{enumerate}[(F1)]
\item By defining $a\leq b$ if there exists some $c\in E$ such that $a\oplus c$ exists and $a\oplus c=b$, $(E,\leq,{}',0,1)$ becomes a bounded poset with an antitone involution. We call $\leq$ the {\em induced order} of $\mathbf E$. Recall that the element $c$ is unique, if it exists. Then $c$ is equal to $(a\oplus b')'$ and it is denoted by $b\ominus a$.
\item $a\oplus b$ exists if and only if $a\leq b'$.
\item $a\oplus0$ and $0\oplus a$ exist and $a\oplus0=0\oplus a=a$.
\item $(a')'=a$.
\end{enumerate}

\begin{proposition}\label{prop} 
Every effect algebra $\mathbf P=(P,\oplus,0,1)$ is a paraorthomodular poset.
\end{proposition}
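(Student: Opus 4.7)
The plan is to reduce (P*) directly to the partial-sum structure of $\mathbf E$. By fact (F1) the induced order already gives a bounded poset $(P,\leq,{}',0,1)$ with antitone involution, so it suffices to verify condition (P*).

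So I would take $a,b\in P$ with $a\leq b$ and $L(a',b)=\{0\}$ and try to produce the ``difference'' element $c=b\ominus a$ inside the lower cone $L(a',b)$. By the definition of the induced order, $a\leq b$ yields some $c\in P$ with $a\oplus c$ defined and $a\oplus c=b$. Two observations then do all the work. First, because $a\oplus c$ exists, (F2) gives $a\leq c'$, which by the antitony of $'$ (F4) is equivalent to $c\leq a'$. Second, $c\leq b$: indeed by (E1) $c\oplus a$ is defined and equals $a\oplus c=b$, so $c\leq b$ by the very definition of the induced order.

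These two inequalities place $c\in L(a',b)=\{0\}$, hence $c=0$, and then (F3) gives $b=a\oplus 0=a$, as required.

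The proof is short and the only thing one has to be slightly careful about is distinguishing ``$a\oplus c$ exists'' from ``$c\leq a'$'' and then recovering $c\leq b$ from commutativity (E1); neither step is an obstacle, so I do not expect any genuine difficulty here. The argument also makes clear that $L(a',b)=\{0\}$ is really forcing the unique complement $b\ominus a$ to vanish, which is the effect-algebraic analogue of the orthomodular identity used in Lemma~\ref{lem: ompparaorthomod}.
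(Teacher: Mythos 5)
Your argument is correct and is essentially the paper's own proof: both show that the difference element $c=b\ominus a$ satisfies $c\leq a'$ and $c\leq b$, hence lies in $L(a',b)=\{0\}$, forcing $c=0$ and $a=b$. The only cosmetic difference is that the paper phrases this via the inclusion $L(b\ominus a)=L(a',b\ominus a)\subseteq L(a',b)$ rather than directly saying $b\ominus a\in L(a',b)$.
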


\begin{proof} It is enough to check condition (P*). Let $x,y\in P$ such that $x\leq y$ and $L(x',y)=\{0\}$. Then $L(y\ominus x)=L(x',y\ominus x)\subseteq L(x',y)$. Hence $y\ominus x=0$, i.e.\, $x=y$.
\end{proof}

\section{A representation by commutative directoids}\label{sec:3}

The aim of this section is to show that to every paraorthomodular poset $\mathbf P$ one can assign (in a non-unique way) a certain algebra $\mathbb D(\mathbf P)$ with everywhere defined operations such that the original poset $\mathbf P$ can be completely recovered from $\mathbb D(\mathbf P)$. Moreover, properties of $\mathbf P$ can often be expressed by means of identities and quasi-identities satisfied by $\mathbb D(\mathbf P)$.

The following concept was introduced by J.~Je\v zek and R.~Quackenbush, see \cite{JQ} and also \cite{CL}.

\begin{definition}
A {\em commutative directoid} is a groupoid $(D,\sqcap)$ satisfying the following identities
\begin{align*}
                   x\sqcap x & \approx x\text{ {\rm(}idempotence{\rm)}}, \\
                   x\sqcap y & \approx y\sqcap x\text{ {\rm(}commutativity{\rm)}}, \\
(x\sqcap(y\sqcap z))\sqcap z & \approx x\sqcap(y\sqcap z)\text{ {\rm(}weak associativity{\rm)}}.
\end{align*}
\end{definition}

Let $(P,\leq)$ be a downward directed poset (for example, a poset with $0$). Define $x\sqcap y:=x\wedge y$ if $x\wedge y$ exists and let $x\sqcap y=y\sqcap x$ be an arbitrary element of $L(x,y)$ otherwise ($x,y\in P$). It is elementary that $(P,\sqcap)$ is a commutative directoid. It will be called a {\em commutative directoid assigned} to $(P,\leq)$. Conversely, let $(D,\sqcap)$ be a commutative directoid. For all $x,y\in D$ define
\[
x\leq y\text{ if and only if }x\sqcap y=x.
\]
Then $(D,\leq)$ is a downward directed poset, the so-called {\em poset induced} by $(D,\sqcap)$. If $\mathbf {D}=(D,\sqcap)$ is obtained from a poset $\mathbf P$ as above, i.e.\ it is assigned to $\mathbf D$, we will express this fact also by $\mathbf D=\mathbb D(\mathbf P)$. Conversely, the fact that $\mathbf P$ is induced by $\mathbf D$ will be expressed also by $\mathbf P=\mathbb P(\mathbf D)$. If $\mathcal C$ and $\mathcal D$ is a class of posets and directoids, respectively, we will say that $\mathcal C$ is induced by $\mathcal D$ ($\mathcal D$ is assigned to $\mathcal C$) if
\begin{align*}
\mathcal C & =\{\mathbf P\mid\text{there exists some }\mathbf D\in\mathcal D\text{ with }\mathbb P(\mathbf D)=\mathbf P\}\text{ and} \\
\mathcal D & =\{\mathbf D\mid\text{there exists some }\mathbf P\in\mathcal C\text{ with }\mathbb D(\mathbf P)=\mathbf D\}.
\end{align*}
Although the assignment $(P,\leq)\mapsto(P,\sqcap)$ is not unique, the poset induced by $(P,\sqcap)$ coincides with the original one. Hence, every commutative directoid assigned to a poset contains the whole information concerning this poset.

Now let $\mathbf P=(P,\leq,{}')$ be a poset with an antitone involution and $\mathbb D(\mathbf P)$ an assigned commutative directoid. Then we can define
\[
x\sqcup y:=(x'\sqcap y')'
\]
for all $x,y\in P$. It is easy to show (by De Morgan's laws) that $(P,\sqcup)$ is a commutative directoid, too. Moreover,
\[
x\leq y\text{ if and only if }x\sqcup y=y.
\]
Recall from \cite S (see also \cite{CL}) that the algebra $(P,\sqcup,\sqcap)$ is called a {\em $\lambda$-lattice}.

\subsection{Paraorthomodular directoids}
If $\mathbf P=(P,\leq,{}',0,1)$ is an orthomodular poset then the assigned commutative directoids $\mathbb D(\mathbf P)=(P,\sqcap,{}',0,1)$ can be characterized by identities. Hence the class of these commutative directoids forms a variety, see \cite{CK}. A similar result holds for orthoposets. Analogously, to the class of complemented posets (satisfying additional properties) a variety of commutative directoids can be assigned, see \cite{CKL}. This motivates us to derive a similar result for the class of paraorthomodular posets. First we recall the following results from \cite{CK} and \cite{CKL}.

\begin{proposition}\label{prop1}
Let $\mathbf P=(P,\leq,{}')$ be a downward directed poset with a unary operation $'$ and $\mathbb D(\mathbf P)=(P,\sqcap,{}')$ an assigned commutative directoid. Then $'$ is an antitone involution on $(P,\leq)$ if and only if $\mathbb D(\mathbf P)$ satisfies the following identities:
\begin{enumerate}
\item[{\rm(4)}] $x''\approx x$,
\item[{\rm(5)}] $(x\sqcap y)'\sqcap y'\approx y'$.
\end{enumerate}
\end{proposition}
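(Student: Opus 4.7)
The plan is to prove the biconditional by establishing each direction separately, using only the defining features of a commutative directoid assigned to a poset, namely that $x\sqcap y\in L(x,y)$ and that $x\le y$ iff $x\sqcap y=x$.

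For the forward direction, assume $'$ is an antitone involution on $(P,\le)$. Identity (4) is then immediate from the involutive property $x''=x$. For (5), I would observe that by construction $x\sqcap y\in L(x,y)$, so in particular $x\sqcap y\le y$. Antitonicity of $'$ yields $y'\le(x\sqcap y)'$, which in terms of the directoid translates to $y'\sqcap(x\sqcap y)'=y'$; commutativity then gives $(x\sqcap y)'\sqcap y'=y'$, as required.

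For the reverse direction, assume (4) and (5) hold. Identity (4) says precisely that $'$ is an involution. To prove antitonicity, suppose $x\le y$, i.e.\ $x\sqcap y=x$. Substituting into (5) gives
\[
x'\sqcap y'=(x\sqcap y)'\sqcap y'=y',
\]
and by commutativity $y'\sqcap x'=y'$, which is exactly $y'\le x'$ in the induced order. Hence $'$ is antitone.

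There is no serious obstacle here; the only subtle point is to remember that the assignment guarantees $x\sqcap y\in L(x,y)$ (so $x\sqcap y\le y$ for \emph{any} choice of assignment), which is the geometric fact that drives the forward implication. In the backward direction, one must be careful not to assume any monotonicity of $\sqcap$ beyond what (5) encodes, and to use only idempotence, commutativity, and the definition of $\le$ in terms of $\sqcap$.
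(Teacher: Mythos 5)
Your proof is correct and follows essentially the same route as the paper's: both directions rest on the facts that $x\sqcap y\le y$ for any assigned directoid and that $u\le v$ iff $u\sqcap v=u$, with the paper's backward step $b'=(a\sqcap b)'\sqcap b'=a'\sqcap b'\le a'$ being only a cosmetic variant of your appeal to commutativity. No gaps.
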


For the reader's convenience, we repeat the short proof.

\begin{proof}
Let $a,b\in P$. The operation $'$ is an involution if and only if (4) holds. If $'$ is antitone then from $a\sqcap b\leq b$ we obtain $b'\leq(a\sqcap b)'$, thus
\[
(a\sqcap b)'\sqcap b'=b',
\]
i.e.\ (5) holds. If, conversely, (5) holds and $a\leq b$ then
\[
b'=(a\sqcap b)'\sqcap b'=a'\sqcap b'\leq a',
\]
i.e.\ $'$ is antitone.
\end{proof}

The following remark is (almost) immediate.

\begin{remark} Let $\mb D:=(D,\sqcap,{}')$ be a commutative directoid with a unary operation $'$. Then $\mb D$ satisfies conditions \rm{(4)} and \rm{(5)} from Proposition \ref{prop1} if and only if it satisfies \[((x\sqcap y)'\sqcap y')'\approx y.\] 
\end{remark}

It is also possible to describe the lower cone $L(a,b)$ in terms of an assigned commutative directoid as follows.

\begin{lemma}\label{lem1}
Let $\mathbf P=(P,\leq)$ be a downward directed poset, $\mathbb D(\mathbf P)=(P,\sqcap)$ an assigned commutative directoid and $a,b\in P$. Then $L(a,b)=\{(x\sqcap a)\sqcap(x\sqcap b)\mid x\in P\}$.
\end{lemma}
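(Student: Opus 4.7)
The plan is to verify the set equality $L(a,b) = \{(x\sqcap a)\sqcap(x\sqcap b) \mid x\in P\}$ by a pair of direct inclusions, relying only on two facts: (i) for any $p,q\in P$, one has $p\sqcap q \in L(p,q)$ (immediate from the construction of the assigned directoid, whether $p\wedge q$ exists or not); and (ii) for an assigned commutative directoid the induced order coincides with the original, so $u\leq v$ if and only if $u\sqcap v=u$.

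For the inclusion $\supseteq$, fix an arbitrary $x\in P$. By (i), $x\sqcap a\leq a$ and $x\sqcap b\leq b$. Applying (i) once more to the pair $x\sqcap a,\,x\sqcap b$ gives
\[
(x\sqcap a)\sqcap(x\sqcap b)\;\leq\; x\sqcap a\;\leq\; a,\qquad (x\sqcap a)\sqcap(x\sqcap b)\;\leq\; x\sqcap b\;\leq\; b,
\]
so $(x\sqcap a)\sqcap(x\sqcap b)\in L(a,b)$.

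For the reverse inclusion $\subseteq$, take any $c\in L(a,b)$ and choose the witness $x:=c$. Since $c\leq a$ and $c\leq b$, fact (ii) yields $c\sqcap a=c$ and $c\sqcap b=c$. Combined with the idempotence of $\sqcap$, this gives
\[
(c\sqcap a)\sqcap(c\sqcap b)\;=\;c\sqcap c\;=\;c,
\]
exhibiting $c$ as an element of the right-hand side.

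There is no genuine obstacle here; the only subtlety is to refrain from treating $\sqcap$ as a meet operation, since for an assigned commutative directoid $x\sqcap y$ need only be \emph{some} element of $L(x,y)$ when $x\wedge y$ does not exist. The argument above uses only the two generic properties (i) and (ii), both of which hold uniformly regardless of how the ambiguous choices in the assignment $(P,\leq)\mapsto(P,\sqcap)$ were made, so the identity is independent of the particular assigned directoid.
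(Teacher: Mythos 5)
Your proof is correct and follows essentially the same route as the paper's: the paper likewise writes $c=c\sqcap c=(c\sqcap a)\sqcap(c\sqcap b)$ for $c\in L(a,b)$ (you merely make the use of $c\sqcap a=c$, $c\sqcap b=c$ explicit) and establishes the reverse inclusion by the same chain of inequalities $(x\sqcap a)\sqcap(x\sqcap b)\leq x\sqcap a\leq a$ and $(x\sqcap a)\sqcap(x\sqcap b)\leq x\sqcap b\leq b$.
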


\begin{proof}
If $c\in L(a,b)$ then $c=c\sqcap c=(c\sqcap a)\sqcap(c\sqcap b)$. Conversely, if $d\in P$ then
\begin{align*}
(d\sqcap a)\sqcap(d\sqcap b)\leq d\sqcap a\leq a, \\
(d\sqcap a)\sqcap(d\sqcap b)\leq d\sqcap b\leq b,
\end{align*}
i.e.\ $(d\sqcap a)\sqcap(d\sqcap b)\in L(a,b)$.
\end{proof}

Now we are ready to prove our first result.

\begin{theorem}\label{thm:pardirectelementclass}
Let $\mathbf P=(P,\leq,{}',0,1)$ be a bounded poset with a unary operation $'$ and $\mathbb D(\mathbf P)=(P,\sqcap,{}',0,1)$ an assigned commutative directoid. Then $\mathbf P$ is paraorthomodular if and only if $\mathbb D(\mathbf P)$ satisfies identities {\rm(4)} and {\rm(5)} and for all $x,y\in P$ the following condition:
\begin{enumerate}
\item[{\rm(6)}] $((x\sqcap y)'\sqcap z)\sqcap(x\sqcap z)=0$ for all $z\in P$ implies $x\leq y$.
\end{enumerate}
\end{theorem}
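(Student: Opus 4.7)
The plan is to use Proposition~\ref{prop1} to dispose of identities (4) and (5), which together are equivalent to $'$ being an antitone involution on $(P,\leq)$; this is exactly the part of the paraorthomodular definition that does not concern condition (P*). What remains is to establish, under (4) and (5), that (P*) is equivalent to condition (6). The bridge between them is Lemma~\ref{lem1}, which describes the lower cone of any two elements $u,v\in P$ as $L(u,v)=\{(z\sqcap u)\sqcap(z\sqcap v)\mid z\in P\}$.

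For the forward implication, assume $\mathbf P$ is paraorthomodular and suppose the hypothesis of (6): $((x\sqcap y)'\sqcap z)\sqcap(x\sqcap z)=0$ for every $z\in P$. By commutativity of $\sqcap$ and Lemma~\ref{lem1} applied to the pair $((x\sqcap y)',x)$, this equality says precisely $L((x\sqcap y)',x)=\{0\}$. Since the directoid inequalities give $x\sqcap y\leq x$, I can apply (P*) to the pair $(x\sqcap y,\,x)$ and conclude $x\sqcap y=x$, i.e.\ $x\leq y$, which is the conclusion of (6).

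For the converse, assume (6) and let $a,b\in P$ satisfy $a\leq b$ and $L(a',b)=\{0\}$. The relation $a\leq b$ translates in the induced order of the directoid into $b\sqcap a=a$, whence $(b\sqcap a)'=a'$. Substituting $x:=b$, $y:=a$ into the hypothesis of (6), the clause to verify becomes $(a'\sqcap z)\sqcap(b\sqcap z)=0$ for all $z\in P$, which by Lemma~\ref{lem1} is just a restatement of $L(a',b)=\{0\}$. Hence (6) yields $b\leq a$, and combined with $a\leq b$ this gives $a=b$, which is condition (P*).

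The main obstacle—really the only step that needs a spark—is recognising that (6) is just (P*) after the substitution $(x,y)\mapsto(b,a)$ plus absorption $b\sqcap a=a$; once the substitution is identified, both implications collapse into a single application of Lemma~\ref{lem1}. One small bookkeeping point I would double-check is that (4) and (5) are genuinely needed before (6) is even meaningful (so that $(x\sqcap y)'$ behaves as an involution, allowing the passage between $a$ and $a''$ and ensuring antitonicity is not implicitly invoked elsewhere); Proposition~\ref{prop1} handles this cleanly.
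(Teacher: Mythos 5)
Your proof is correct and follows essentially the same route as the paper's: Proposition~\ref{prop1} handles identities (4) and (5), and Lemma~\ref{lem1} translates the universally quantified equation in (6) into the statement $L((x\sqcap y)',x)=\{0\}$, after which both implications are exactly the paper's applications of (P*) to the pair $(x\sqcap y,x)$ and of (6) with the substitution $(x,y)\mapsto(b,a)$. No gaps.
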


\begin{remark}
Observe that {\rm(6)} is equivalent to
\begin{enumerate}
\item[{\rm(6')}] $x\leq y$ and $(x'\sqcap z)\sqcap(y\sqcap z)=0$ for all $z\in P$ imply $x=y$.
\end{enumerate}
\end{remark}

\begin{proof}
Let $a,b,c\in P$. By Proposition~\ref{prop1}, $'$ is an antitone involution if and only if $\mathbb D(\mathbf P)$ satisfies identities (4) and (5). If $\mathbf P$ satisfies (P*) and $((a\sqcap b)'\sqcap x)\sqcap(a\sqcap x)=0$ for all $x\in P$ then, by Lemma~\ref{lem1}, $L((a\sqcap b)',a)=\{0\}$ which together with $a\sqcap b\leq a$ and (P*) implies $a=a\sqcap b\leq b$, i.e.\ (6) holds. If, conversely, (6) holds, $a\leq b$ and $L(a',b)=\{0\}$ then, by Lemma~\ref{lem1},
\[
((b\sqcap a)'\sqcap x)\sqcap(b\sqcap x)=(a'\sqcap x)\sqcap(b\sqcap x)=0
\]
for all $x\in P$ and hence $b\leq a$ which together with $a\leq b$ implies $a=b$, i.e.\ (P*) holds.
\end{proof}

If a bounded directoid $\mathbf D$ with an antitone involution is such that $\mathbb P(\mathbf D)$ is a paraorthomodular poset, then $\mathbf D$ will be called a \emph{paraorthomodular directoid}. Note that any paraorthomodular poset $\mathbf P$ can be organized into a paraorthomodular directoid. Let us denote by $\mathcal D^{\mathcal P}$ the class of {\em all} paraorthomodular directoids. Clearly, $\mathcal D^{\mathcal P}$ is assigned to $\mathcal P$. Now, since $\mathcal D^{\mathcal P}$ is an elementary class (by Theorem~\ref{thm:pardirectelementclass}), one might wonder if $\mathcal D^{\mathcal P}$ is also a universal class. The next results show that, indeed, there are sufficient conditions for answering the above question in the positive. Let $\mathcal C$ be a class of directoids. Let us denote by $\mathcal C_0$ the class of bounded involution posets induced by $\mathcal C$.

Consider a universal formula $\eta$ in the language $(\leq,0,1)$. Let us define its translation $t$ in the language $(\sqcap,0,1)$ as follows: For any literal $\phi$ that occurs in $\eta$
\begin{enumerate}
\item[(7)] $t(\phi)=
\begin{cases}
{x\sqcap y\approx x}\text{, if }\phi\text{ is }x\leq y; \\
\lnot(x\sqcap y\approx x)\text{, otherwise.}
\end{cases}
$
\end{enumerate}
Let us remark that $t$ can be obviously extended to a translation from the language $(\le,{}',0,1)$ in the language $(\sqcap,{}',0,1)$.

\begin{lemma}\label{lm:n->t(n)}
Let $\eta$ be a universal formula in the language $(\le,{}',0,1)$. Then, for any involutive directoid $\mathbf D$, $\mathbf D\models t(\eta)$ if and only if $\mathbb P(\mathbf D)\models\eta$.
\end{lemma}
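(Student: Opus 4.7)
The plan is to proceed by induction on the structure of the universal formula $\eta$, reducing everything to the key fact that the induced order is defined exactly by $x \leq y \Leftrightarrow x \sqcap y = x$.

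First I would make precise how $t$ extends from literals to arbitrary universal formulas: since $\eta$ has the form $\forall \bar{x}\, \varphi(\bar{x})$ with $\varphi$ quantifier-free, we may assume $\varphi$ is a Boolean combination of literals, and $t$ commutes with the Boolean connectives (it is already defined on literals by (7), so $t(\varphi_1 \land \varphi_2) := t(\varphi_1) \land t(\varphi_2)$, $t(\lnot \varphi) := \lnot t(\varphi)$, etc.). We should also note that the translation of a negated literal $\lnot(x \leq y)$ is $\lnot(x \sqcap y \approx x)$, which agrees with (7). Finally, $t(\forall \bar{x}\, \varphi) := \forall \bar{x}\, t(\varphi)$.

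Next I would handle the base case. Fix an involutive directoid $\mathbf D = (D, \sqcap, {}', 0, 1)$ and let $\mathbb P(\mathbf D) = (D, \leq, {}', 0, 1)$ be the induced bounded poset with antitone involution; recall these share the same universe and the same $'$, $0$, $1$. By the very definition of the induced order, for any valuation $\nu \colon \mathrm{Var} \to D$, the atomic formula $x \leq y$ holds in $\mathbb P(\mathbf D)$ under $\nu$ iff $\nu(x) \sqcap \nu(y) = \nu(x)$, i.e.\ iff $t(x \leq y) = (x \sqcap y \approx x)$ holds in $\mathbf D$ under $\nu$. Since $'$, $0$, and $1$ are interpreted identically in both structures, the same equivalence holds for atomic formulas involving $'$, $0$, $1$ (e.g.\ $x' \leq y$, $x \leq 0$, etc.).

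Then I would run an easy induction on the construction of the quantifier-free matrix $\varphi$. The boolean connectives are absolute between $\mathbf D$ and $\mathbb P(\mathbf D)$ for any fixed assignment, so from the base case we get
\[
(\mathbb P(\mathbf D), \nu) \models \varphi \quad \Longleftrightarrow \quad (\mathbf D, \nu) \models t(\varphi)
\]
for every assignment $\nu$. Since the two structures have the same universe, universally quantifying over $\bar{x}$ on both sides preserves the biconditional, yielding $\mathbf D \models t(\eta)$ iff $\mathbb P(\mathbf D) \models \eta$.

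The only real subtlety is the very first step, that one may present $\eta$ in a form in which the translation on literals extends unambiguously to the whole formula; once one commits to a prenex/NNF presentation and stipulates that $t$ commutes with $\land$, $\lor$, $\lnot$, and $\forall$, the rest is bookkeeping. No non-trivial property of directoids beyond the definition of the induced order is needed, which is why the statement holds at this level of generality.
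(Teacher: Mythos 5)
Your proposal is correct and follows essentially the same route as the paper: both arguments reduce everything to the atomic equivalence $x\leq y\Leftrightarrow x\sqcap y=x$ together with the fact that $\mathbf D$ and $\mathbb P(\mathbf D)$ share the same universe and the same interpretations of $'$, $0$, $1$, so that Boolean connectives and the universal quantifier are absolute. The paper fixes a disjunctive normal form and checks failure literal by literal, whereas you phrase it as a structural induction on the quantifier-free matrix; this is only a cosmetic difference.
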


\begin{proof}
For general reasons, we can safely assume that $\eta$ is in disjunctive normal form. In other words, $\eta$ corresponds to
\[
\forall\vec x\bigvee_{i\in I}\bigwedge_{j\in J}\phi_{i,j},
\]
where $\phi_{i,j}$ is either of the form $x\leq y$, or of the form $\lnot(x\leq y)$, with $x,y\in\{z,z',0,1\}$. Consider
\[
t(\forall\vec x\bigvee_{i\in I}\bigwedge_{j\in J}\phi_{i,j})=\forall\vec x\bigvee_{i\in I}\bigwedge_{j\in J}t(\phi_{i,j}).
\]
It is evident that $t(\eta)$ is also in disjunctive normal form. Let us suppose that
\[
\mathbf D\not\models t(\eta)[\vec x,\vec a]\equiv\bigvee_{i\in I}\bigwedge_{j\in J}t(\phi_{i,j})[\vec x,\vec a],
\]
with $\vec a$ elements of $D$. Then, for any $i\in I$ there is $j\in J$ such that $\mathbf D\not\models t(\phi_{i,j})[\vec x,\vec a]$. So, without loss of generality, for some $a_p,a_q\in D$, either
\begin{align*}
& a_p\sqcap a_q\neq a_p\text{ if }\phi_{i,j}\text{ is of the form }x\leq y\text{, or} \\
& a_p\sqcap a_q=a_p\text{ if }\phi_{i,j}\text{ is of the form }\lnot(x\leq y).
\end{align*}
Then, for any $i\in I$ there is $j\in J$ such that, $\mathbb P(\mathbf D)\not\models\phi_{i,j}$. The converse direction is analogous.
\end{proof}

The next result shows that if a class $\mathcal C_0$ of bounded posets with antitone involution induced by a class $\mathcal C$ of bounded directoids with antitone involution is universal, then $\mathcal C$ is universal.

\begin{corollary}\label{lm:C->C_0}
If $\mathcal C_0$ is a universal class, then $\mathcal C$ is universal.
\end{corollary}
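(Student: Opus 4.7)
The plan is to lift a universal axiomatization of $\mathcal{C}_0$ from the poset signature $(\leq,{}',0,1)$ to the directoid signature $(\sqcap,{}',0,1)$ using the syntactic translation $t$ from (7), and then invoke Lemma~\ref{lm:n->t(n)}. The starting observation is that, since $\mathcal{C}$ is assigned to $\mathcal{C}_0$, a bounded commutative directoid with antitone involution $\mathbf{D}$ lies in $\mathcal{C}$ if and only if $\mathbb{P}(\mathbf{D})\in\mathcal{C}_0$: the forward implication is immediate from $\mathbb{P}(\mathbb{D}(\mathbf{P}))=\mathbf{P}$, while the converse holds because $\mathbf{D}$ is then itself a commutative directoid assigned to $\mathbb{P}(\mathbf{D})\in\mathcal{C}_0$.

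Assuming $\mathcal{C}_0$ is universal, I would fix a set $\Sigma$ of universal sentences in $(\leq,{}',0,1)$ axiomatizing $\mathcal{C}_0$. Placing each $\eta\in\Sigma$ in prenex form with a matrix in disjunctive normal form, the translation $t$ rewrites each atomic $x\leq y$ as $x\sqcap y\approx x$ (and its negation accordingly) without introducing new quantifiers, so $t(\eta)$ remains a universal sentence, now phrased in the directoid signature.

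By Lemma~\ref{lm:n->t(n)}, any bounded commutative involutive directoid $\mathbf{D}$ satisfies $t(\eta)$ iff $\mathbb{P}(\mathbf{D})\models\eta$. Combining this with the first step, $\mathbf{D}\in\mathcal{C}$ iff $\mathbf{D}$ satisfies $t(\Sigma)$ together with the equational axioms making $\mathbf{D}$ a bounded commutative directoid with antitone involution (idempotence, commutativity, weak associativity, $x''\approx x$, $(x\sqcap y)'\sqcap y'\approx y'$, and the axioms for the bounds). All of these axioms are universal, so $\mathcal{C}$ is axiomatized by universal formulas and is therefore a universal class. The only mildly delicate point --- that several non-isomorphic directoids may be assigned to the same poset --- is already absorbed into Lemma~\ref{lm:n->t(n)}, so no genuine obstacle remains beyond that lemma; the work here is essentially a bookkeeping exercise in universal-algebraic translation.
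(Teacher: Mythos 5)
Your proposal is correct and follows essentially the same route as the paper: translate a universal axiomatization of $\mathcal C_0$ via $t$, apply Lemma~\ref{lm:n->t(n)} in both directions, and use the fact that $\mathcal C$ is assigned to $\mathcal C_0$ to convert $\mathbb P(\mathbf D)\in\mathcal C_0$ into $\mathbf D\in\mathcal C$. Your explicit remark that the translated sentences must be taken together with the (universal) directoid axioms is a small bookkeeping refinement the paper leaves implicit, but it is not a different argument.
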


\begin{proof}
Let us assume that $\mathcal C_0$ is a universal class. Then there exists a family of universal formulae $\{\eta_i\}_{i\in I}$ such that $\mathbf P\in\mathcal C_0$ if and only if $\mathbf P\models\eta_i$, for any $i\in I$. Now, for any $i\in I$, let $t(\eta_i)$ be the formula obtained by applying the translation in condition (7). We claim that $\{t(\eta_i)\}_{i\in I}$ defines $\mathcal C$. Consider $\mathbf D\in\mathcal C$. By assumption, for all $i\in I$, $\mathbb P(\mathbf D)\models\eta_i$. Then, by Lemma~\ref{lm:n->t(n)}, we have that $\mathbf D\models t(\eta_i)$. Conversely, if $\mathbf D\models t(\eta_i)$ (for any $i\in I$), then $\mathbb P(\mathbf D)\models\eta_i$, for any $i\in I$, again by Lemma~\ref{lm:n->t(n)}. Hence, $\mathbb P(\mathbf D)\in\mathcal C_0$ and, since $\mathcal C$ is assigned to $\mathcal C_0$, one has $\mathbf D\in\mathcal C$.
\end{proof}

Note that, in general, the converse of Corollary~\ref{lm:C->C_0} need not hold. In fact, paraorthomodular lattices form a quasivariety of directoids. However, it is easily seen that the class of their $\{\leq,{}',0,1\}$-reducts is not closed under substructures.

Indeed, we have the following:

\begin{theorem}\label{prop:pardirectnotunivclass}
The class $\mathcal D^{\mathcal P}$ does not form a universal class.
\end{theorem}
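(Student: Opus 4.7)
The plan is to show that $\mathcal D^{\mathcal P}$ fails to be closed under subalgebras; since every universal class is closed under substructures, this rules out universality. Concretely, I aim to exhibit $\mathbf D \in \mathcal D^{\mathcal P}$ together with a subalgebra $\mathbf D' \subseteq \mathbf D$ (closed under $\sqcap$, under $'$, and containing $0,1$) such that the induced poset $\mathbb P(\mathbf D')$ is isomorphic to the forbidden configuration $\mathbf B_6$ of Fig.~4 and hence violates (P*).

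The first step is to build a paraorthomodular poset $\mathbf P$ in which $\mathbf B_6=\{0,a,b,a',b',1\}$ embeds as a subposet closed under $'$. Since $\mathbf B_6$ alone violates (P*) at the pair $a\le b$ (where $L_{\mathbf B_6}(a',b)=\{0\}$), I enlarge it with auxiliary pairs $e,e'$ and $f,f'$, placing $e,f$ strictly below both $a'$ and $b$, pairwise incomparable and incomparable with $a, b'$ (so $e',f'$ sit strictly above both $a$ and $b'$, pairwise incomparable). Now $L_{\mathbf P}(a',b)=\{0,e,f\}\ne\{0\}$ restores (P*) at the critical pair, and a routine case-by-case verification of the remaining strict inequalities confirms that $\mathbf P$ is paraorthomodular. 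By construction the inherited order on $\{0,a,b,a',b',1\}$ is still exactly that of $\mathbf B_6$.

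Next, I form an assigned commutative directoid $\mathbf D:=\mathbb D(\mathbf P)$. The crucial observation is that $e$ and $f$ are incomparable maximal elements of $L_{\mathbf P}(a',b)$, so $a'\wedge b$ does \emph{not} exist in $\mathbf P$; the definition of commutative directoid therefore lets me freely choose $a'\sqcap b\in L_{\mathbf P}(a',b)$, and I set $a'\sqcap b:=0$. Dually I set $a\sqcap b':=0$. Every other pair within $S:=\{0,a,b,a',b',1\}$ is either comparable (whence $\sqcap$ returns the smaller element, which lies in $S$) or has meet $0$ already in $\mathbf P$ (as with $a\sqcap a'$, $b\sqcap b'$, $a\sqcap b'$). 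Hence $S$ is closed under $\sqcap$ and under $'$, so $\mathbf D':=(S,\sqcap,',0,1)$ is a subalgebra of $\mathbf D$. But $\mathbb P(\mathbf D')=\mathbf B_6$ is not paraorthomodular, so $\mathbf D'\notin\mathcal D^{\mathcal P}$ while $\mathbf D\in\mathcal D^{\mathcal P}$, contradicting the hypothetical closure under subalgebras.

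The main obstacle is the design of $\mathbf P$: one must simultaneously (i) preserve the $\mathbf B_6$ pattern on $\{0,a,b,a',b',1\}$ (no spurious order relations among these six elements), and (ii) ensure that $a'\wedge b$ and $a\vee b'$ fail to exist in $\mathbf P$ so that the directoid has enough slack at those pairs to land inside $S$. This is also why the counterexample necessarily uses a paraorthomodular \emph{poset} that is not a lattice: in a paraorthomodular lattice the meet $a'\wedge b$ would be a forced element of $\mathbf P$, and then either $\mathbf B_6$ could not embed as a subposet or the subalgebra would be dragged outside $S$ to include the witnesses $e,f$ — precisely matching the remark following Corollary~\ref{lm:C->C_0} that the $\{\le,{}',0,1\}$-reducts of paraorthomodular lattices are not closed under substructures.
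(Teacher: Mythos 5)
Your proof is correct and follows essentially the same strategy as the paper's: exhibit a paraorthomodular directoid whose subalgebra on $\{0,a,b,a',b',1\}$ induces the forbidden configuration $\mathbf B_6$, so that $\mathcal D^{\mathcal P}$ fails to be closed under substructures and hence cannot be universal. The only difference is the concrete witness: the paper takes the eight-element lattice of Fig.~5 and simply declares $a\sqcap b'=a'\sqcap b=0$ on the incomparable pairs, whereas you enlarge $\mathbf B_6$ by incomparable elements $e,f$ below $a'$ and $b$ so that $a'\wedge b$ genuinely fails to exist --- with the mild advantage that your directoid is assigned to its poset in the strict sense of the paper's definition ($\sqcap$ agreeing with $\wedge$ wherever the meet exists), which the Fig.~5 example does not satisfy since $a\wedge b'=c'$ there.
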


\begin{proof}
Consider the paraorthomodular poset (lattice) $\mathbf P=(P,\leq,{}',0,1)$ depicted in Fig.~5:

\vspace*{-2mm}

\begin{center}
\setlength{\unitlength}{7mm}
\begin{picture}(4,12)
\put(2,1){\circle*{.3}}
\put(2,3){\circle*{.3}}
\put(1,5){\circle*{.3}}
\put(3,5){\circle*{.3}}
\put(1,7){\circle*{.3}}
\put(3,7){\circle*{.3}}
\put(2,9){\circle*{.3}}
\put(2,11){\circle*{.3}}
\put(2,1){\line(0,1)2}
\put(2,3){\line(-1,2)1}
\put(2,3){\line(1,2)1}
\put(1,5){\line(0,1)2}
\put(3,5){\line(0,1)2}
\put(2,9){\line(-1,-2)1}
\put(2,9){\line(1,-2)1}
\put(2,9){\line(0,1)2}
\put(1.85,.25){$0$}
\put(2.4,2.85){$c'$}
\put(.3,4.85){$b'$}
\put(3.4,4.85){$a$}
\put(.3,6.85){$a'$}
\put(3.4,6.85){$b$}
\put(2.4,8.85){$c$}
\put(1.3,11.4){$1=0'$}
\put(1.2,-.75){{\rm Fig.\ 5}}
\end{picture}
\end{center}

\vspace*{4mm}


Now, consider the involutive directoid $\mathbb D(\mathbf P)$ assigned to $\mathbf {P}$ such that
\[
b'\sqcap a=a\sqcap b'=a'\sqcap b=b\sqcap a'=0,
\]
and $x\sqcap y=\min\{x,y\}$ for any other pair of elements. Of course, $\mathbb D(\mathbf P)$ is a paraorthomodular directoid. However, it can be easily seen that the sub-involutive directoid of $\mathbb D(\mathbf P)$ generated by $a,b$ is not. Hence, the class $\mathcal D^\mathcal P$ is not closed under substructures and therefore it is not a universal class.
\end{proof}

Actually, Theorem~\ref{prop:pardirectnotunivclass} suggests that, in order to assign (at least) a quasivariety of directoids to the class $\mathcal P$ of paraorthomodular posets, a stricter policy by means of which the operation $\sqcap$ is defined must be adopted. In other words, the class $\mathcal D^\mathcal P$ is ``too large''.

Let $\mathbf P=(P,\leq,{}',0,1)$ be a paraorthomodular poset. Define, for any $x,y\in P$,
\[
x\sqcap y:=\left\{
\begin{array}{ll}
\min\{x,y\}                                         & \text{if }x\not\parallel y, \\
\text{an arbitrary element of }L(x,y)\setminus\{0\} & \text{if }x'<y\text{ and }L(x,y)\neq\{0\}, \\
\text{an arbitrary element of }L(x,y)               & \text{otherwise}.
\end{array}
\right.
\]
Clearly, $\mathbb D(\mathbf P)=(P,\sqcap,{}',0,1)$ is a bounded directoid with an antitone involution. A bounded directoid $\mathbf D$ with an antitone involution obtained from a bounded poset $\mathbf P$ with an antitone involution by means of the above procedure will be said to be {\em canonically} assigned to $\mathbf P$. If a bounded directoid $\mathbf D$ with an antitone involution is canonically assigned to a bounded poset $\mathbf P$ with an antitone involution, we will write $\mathbf D=\mathbb D^c(\mathbf P)$. Given a class $\mathcal C$ of bounded posets with an antitone involution, and a class $\mathcal D$ of bounded directoids with an antitone involution, we will say that $\mathcal D$ is canonically assigned to $\mathcal C$ if
\begin{align*}
\mathcal C & =\{\mathbf P\mid\text{there exists some }\mathbf D\in\mathcal D\text{ with }\mathbb P(\mathbf D)=\mathbf P\}\text{ and} \\
\mathcal D & =\{\mathbf D\mid\text{there exists some }\mathbf P\in\mathcal C\text{ with }\mathbb D^c(\mathbf P)=\mathbf D\}.
\end{align*}
From now on, we will denote by $\mathcal D_c^\mathcal P$ the class of bounded directoids with an antitone involution canonically assigned to $\mathcal P$.

\begin{proposition}\label{prop:dpc form a quasivar}
$\mathcal D_c^\mathcal P$ forms a quasivariety axiomatized by {\rm(4)} and {\rm(5)} of Proposition~\ref{prop1}, and the following quasi-identity
\begin{enumerate}
\item[{\rm(8)}] $(x\sqcap y)'\sqcap y=0$ implies $x\sqcap y=y$.
\end{enumerate}
\end{proposition}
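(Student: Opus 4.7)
The plan is to establish both implications of the quasi-equational characterization: first that every canonical assignment $\mathbb D^c(\mathbf P)$ with $\mathbf P\in\mathcal P$ satisfies (4), (5), and (8); and conversely that every bounded commutative directoid $\mathbf D$ with unary operation $'$ satisfying these axioms belongs to $\mathcal D_c^\mathcal P$.

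For the forward direction, identities (4) and (5) are immediate from Proposition~\ref{prop1}, since the canonical recipe is a particular instance of an assignment in the sense of that result. To verify the quasi-identity (8), I would suppose $(a\sqcap b)'\sqcap b=0$ in $\mathbb D^c(\mathbf P)$, set $c:=a\sqcap b$ so that $c\leq b$ holds in $\mathbf P$, and derive a contradiction from $c<b$ by analysing the value of $c'\sqcap b$ through the three clauses of the canonical definition. If $c'$ and $b$ are comparable, the rule forces $c'\sqcap b=\min\{c',b\}$, and the assumption $c'\sqcap b=0$ yields either $c=1$ or $b=0$, each contradicting $c<b$. If $c'\parallel b$, then since $c=c''<b$, the second clause produces a nonzero $c'\sqcap b$ whenever $L(c',b)\neq\{0\}$ (contradicting the hypothesis); while if $L(c',b)=\{0\}$, then paraorthomodularity (P*) applied to $c\leq b$ forces $c=b$, again contradicting $c<b$.

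For the reverse direction, set $\mathbf P:=\mathbb P(\mathbf D)$; Proposition~\ref{prop1} together with (4) and (5) ensures $'$ is an antitone involution on $\mathbf P$. To see $\mathbf P$ is paraorthomodular, if $a\leq b$ and $L(a',b)=\{0\}$, then $a'\sqcap b$, being a lower bound of $\{a',b\}$, must be $0$; since $a\leq b$ gives $a\sqcap b=a$, the quasi-identity (8) applied to the pair $(a,b)$ delivers $a=a\sqcap b=b$. To see $\sqcap$ qualifies as a valid canonical assignment to $\mathbf P$, only the second clause is non-automatic: if $a\parallel b$, $a'<b$ and $L(a,b)\neq\{0\}$, one needs $a\sqcap b\neq 0$. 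Assuming $a\sqcap b=0$, the fact that $a'\leq b$ gives $a'\sqcap b=a'$, so $(a'\sqcap b)'\sqcap b=a\sqcap b=0$; applying (8) produces $a'\sqcap b=b$, i.e.\ $a'=b$, contradicting $a'<b$.

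The main obstacle is conceptual rather than technical: the universally-quantified condition (6) of Theorem~\ref{thm:pardirectelementclass} is replaced here by a plain quasi-identity, and this succeeds because the canonical rule enforces $a\sqcap b\neq 0$ precisely when $L(a,b)\setminus\{0\}$ is nonempty, so the ``witnesses'' $z$ are absorbed into the value of $\sqcap$ itself. Once this is identified, both directions amount to systematic case analyses driven by the three clauses of the canonical definition.
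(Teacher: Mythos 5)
Your proof is correct and follows essentially the same route as the paper: (4) and (5) via Proposition~\ref{prop1}, the quasi-identity (8) from the canonical clauses together with (P*), and for the converse the observation that $x\le y$ gives $x'\sqcap y=(x\sqcap y)'\sqcap y$ plus the same contradiction argument showing $\mathbf D=\mathbb D^c(\mathbb P(\mathbf D))$. Your three-case analysis in the forward direction merely spells out more explicitly what the paper compresses into the remark that $(x\sqcap y)''=x\sqcap y\leq y$ forces $L((x\sqcap y)',y)=\{0\}$.
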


\begin{proof}
Suppose that $\mathbf D=\mathbb D^c(\mathbf P)$, for some $\mathbf P\in\mathcal P$. If $(x\sqcap y)'\sqcap y=0$, since $(x\sqcap y)''=x\sqcap y\leq y$, then one must have $L((x\sqcap y)',y)=\{0\}$. Since $\mathbf P$ is paraorthomodular, we have $x\sqcap y=y$. Conversely, suppose that $\mathbf D$ is a bounded directoid with an antitone involution satisfying (8). Clearly $\mathbb P(\mathbf D)$ is a bounded lattice with an antitone involution. Moreover, if $x\leq y$ and $L(x',y)=\{0\}$, then $x'\sqcap y=(x\sqcap y)'\sqcap y=0$ and, by (8), $x\sqcap y=y$, i.e.\ $x=y$. Therefore $\mathbb P(\mb D)$ is paraorthomodular. Let us show $\mathbf D=\mathbb D^c(\mathbb P(\mathbf D))$. To this aim, just note that, assuming by way of contradiction that $x'<y$, $L(x,y)\neq\{0\}$ and $x\sqcap y=0$ one has, by (8), $x'=y$.
\end{proof}

\begin{proposition}\label{prop: dcanonical not var}
$\mathcal D_c^\mathcal P$ does not form a variety.
\end{proposition}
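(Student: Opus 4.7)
My plan is to exploit the fact that $\mathcal{D}_c^{\mathcal{P}}$ is already known to be a quasivariety (by Proposition~\ref{prop:dpc form a quasivar}), so in order to conclude it is not a variety, by Birkhoff's theorem it suffices to exhibit a single surjective homomorphism $h\colon\mathbf{D}\to\mathbf{D}'$ of bounded directoids with antitone involution such that $\mathbf{D}\in\mathcal{D}_c^{\mathcal{P}}$ but $\mathbf{D}'\notin\mathcal{D}_c^{\mathcal{P}}$. Equivalently, I will find a congruence $\theta$ on some member of $\mathcal{D}_c^{\mathcal{P}}$ such that the quasi-identity (8) of Proposition~\ref{prop:dpc form a quasivar} fails in the quotient.

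For the concrete example I will reuse the paraorthomodular lattice $\mathbf{P}=(P,\le,{}',0,1)$ depicted in Fig.~5. Since $\mathbf{P}$ is a lattice, the canonical assignment is forced: $\mathbb{D}^c(\mathbf{P})=(P,\wedge,{}',0,1)$, and this structure trivially belongs to $\mathcal{D}_c^{\mathcal{P}}$. I then consider the equivalence relation $\theta$ on $P$ whose only non-singleton blocks are $\{0,c'\}$ and $\{c,1\}$. The key structural feature of Fig.~5 that makes this work is that $c'$ is the \emph{unique} atom and $c$ the \emph{unique} coatom of $\mathbf{P}$.

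I next verify that $\theta$ is a congruence of $(P,\wedge,{}',0,1)$. The involution is compatible because $'$ swaps the two blocks: $0'=1$, $(c')'=c$. For $\wedge$, observe that $c'\wedge x=c'$ whenever $x\neq 0$ (because $c'\le x$) and $c'\wedge 0=0$, so $0\wedge x$ and $c'\wedge x$ always lie in $\{0,c'\}$, and dually $c\wedge x=x$ for $x\neq 1$ while $c\wedge 1=c$, so $c\wedge x$ and $1\wedge x$ are $\theta$-related. A direct computation then identifies the quotient poset $\mathbb{P}(\mathbb{D}^c(\mathbf{P})/\theta)$ on the six classes $[0],[a],[b],[a'],[b'],[1]$: the two chains $[0]<[a]<[b]<[1]$ and $[0]<[b']<[a']<[1]$ remain, while all cross-meets (such as $[a]\wedge[b']=[c']=[0]$ and $[a]\wedge[a']=[c']=[0]$) collapse to $[0]$. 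Hence the quotient is ortho-isomorphic to $\mathbf{B}_6$ of Fig.~4.

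By the forbidden configuration theorem established earlier, $\mathbf{B}_6$ is not paraorthomodular; indeed $[a]\le [b]$ and $L([a]',[b])=L([a'],[b])=\{[0]\}$, yet $[a]\neq[b]$. Equivalently, (8) fails in $\mathbb{D}^c(\mathbf{P})/\theta$ at $x=[a],y=[b]$, since $([a]\sqcap[b])'\sqcap[b]=[a']\sqcap[b]=[0]$ while $[a]\sqcap[b]=[a]\neq[b]$. Thus $\mathbb{D}^c(\mathbf{P})/\theta\notin\mathcal{D}_c^{\mathcal{P}}$, and the canonical projection witnesses that $\mathcal{D}_c^{\mathcal{P}}$ is not closed under homomorphic images. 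I expect the only non-routine step to be the verification that $\theta$ is a congruence, but this is genuinely short because extremality of $c'$ and $c$ reduces the compatibility check to two symmetric cases; once the quotient is identified as $\mathbf{B}_6$, the conclusion is an immediate appeal to the results already obtained in the paper.
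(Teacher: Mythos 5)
Your proof is correct, but it takes a genuinely different route from the paper's. The paper argues indirectly: assuming $\mathcal D_c^\mathcal P$ were equationally definable by some set $\mathcal E$, it adjoins associativity and the regularity identity to obtain a variety that is then shown to coincide with the quasivariety $\mathbb{PL}$ of paraorthomodular lattices, contradicting the known fact (\cite[Theorem~3.2]{GLP16}) that $\mathbb{PL}$ is not closed under homomorphic images. You instead give a direct, self-contained witness to the failure of $H$-closure: the lattice of Fig.~5 with $\sqcap=\wedge$ is a legitimate canonical assignment (one should check, as you implicitly do, that $b\sqcap a'=c'\neq 0$, as required by the clause for $x'<y$), the relation collapsing $\{0,c'\}$ and $\{c,1\}$ is a congruence because $c'$ is the unique atom and $c$ the unique coatom, and the quotient is ortho-isomorphic to $\mathbf B_6$, where quasi-identity (8) visibly fails; since any equational class is closed under quotients, $\mathcal D_c^\mathcal P$ is not a variety. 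This is arguably cleaner, as it avoids reliance on the external result from \cite{GLP16} and on the intermediate identification $\mathbb{PL}=\mathcal L$; what the paper's argument buys in exchange is the structural observation that adding associativity and regularity to any putative equational base for $\mathcal D_c^\mathcal P$ would axiomatize exactly the paraorthomodular lattices. One small inaccuracy: the canonical assignment is not ``forced'' to be $\wedge$ on a lattice (for incomparable pairs outside the $x'<y$ clause an arbitrary element of the lower cone may be chosen); but $\wedge$ is one admissible choice, which is all your argument needs.
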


\begin{proof}
If $\mathcal D_c^\mathcal P$ forms a variety, then there exists a (possibly infinite) set of equations $\mathcal E$ such that $\mathcal D_c^\mathcal P=\{\mathbf D\mid\mathbf D\models\mathcal E\}$. Now, consider the set of equations $\mathcal E':=\mathcal E\cup\{\text{(ass),(reg)}\}$, where
\begin{enumerate}
\item[(ass)] $x\sqcap(y\sqcap z)\approx(x\sqcap y)\sqcap z$,
\item[(reg)] $(x\sqcap x')\sqcap(y\sqcup y')\approx x\sqcap x'$.
\end{enumerate}
Clearly, the class $\mathcal L=\{\mathbf D\mid\mathbf D\models\mathcal E'\}$ of bounded directoids with an antitone involution is again a variety. Now, let us denote by $\mathbb{PL}$ the class (quasivariety) of paraorthomodular lattices (in the sense of \cite{GLP16}). We show that $\mathbb{PL}$ coincides with $\mathcal L$. Firstly, note that, for any $\mathbf L\in\mathbb{PL}$, setting $x\land y=x\sqcap y$, for any $x,y\in L$, $\mathbf L$ is a bounded directoid with an antitone involution satisfying (8), (ass) and (reg). Hence, by hypothesis it must satisfy $\mathcal E'$. Therefore, we conclude $\mathbb{PL}\subseteq\mathcal L$. Conversely, since $\mathcal L$ satisfies (ass), by \cite{CL}, any member $\mathbf D$ of $\mathcal L$ is a meet-semilattice and, since $'$ is an antitone involution, it is also a lattice, by De Morgan's laws. Moreover, since $\mathbf D$ satisfies (8) and (reg), it is a paraorthomodular lattice. Therefore, we conclude that $\mathbb{PL}=\mathcal L$. However, by \cite[Theorem~3.2]{GLP16}, $\mathbb{PL}$ is not closed under homomorphic images and therefore it does not form variety, a contradiction. We conclude that $\mathcal D_c^\mathcal P$ does not form a variety.
\end{proof}

\begin{proposition}
The variety generated by $\mathcal D^\mathcal P_c$ coincides with the class of bounded directoids with an antitone involution.
\end{proposition}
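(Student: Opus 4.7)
The proof proceeds by sandwiching $V(\mathcal D_c^\mathcal P)$ between two copies of the class $\mathcal V$ of bounded directoids with antitone involution. First, observe that $\mathcal V$ is itself a variety: in the signature $(\sqcap,{}',0,1)$ it is axiomatised by idempotence, commutativity and weak associativity of $\sqcap$, by the antitone-involution identities (4) and (5) of Proposition~\ref{prop1}, and by the boundedness identities $x\sqcap0\approx0$, $x\sqcap1\approx x$, $0'\approx1$, $1'\approx0$. Since $\mathcal D_c^\mathcal P\subseteq\mathcal V$ by construction, the inclusion $V(\mathcal D_c^\mathcal P)\subseteq\mathcal V$ is automatic.

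For the reverse inclusion I would show that every $\mathbf D=(D,\sqcap,{}',0,1)\in\mathcal V$ is a homomorphic image of a member of $\mathcal D_c^\mathcal P$. Form $\mathbf D^\ast$ by adjoining two fresh elements $0^\ast,1^\ast$ to $D$: the involution is extended by $(0^\ast)':=1^\ast$ and $(1^\ast)':=0^\ast$, the operation by $0^\ast\sqcap x:=0^\ast$ and $1^\ast\sqcap x:=x$ for every $x$, and $0^\ast,1^\ast$ are declared to be the new bounds. The induced poset $\mathbb P(\mathbf D^\ast)$ is then $\mathbb P(\mathbf D)$ with a fresh bottom stacked below and a fresh top stacked above, hence paraorthomodular by the remark in Section~\ref{sec:1}.

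Membership $\mathbf D^\ast\in\mathcal D_c^\mathcal P$ requires checking that the extended $\sqcap$ is an admissible canonical choice. Comparable pairs---including every pair involving $0^\ast$ or $1^\ast$---are forced to their minimum by the very definition of $\leq$ in a directoid, so nothing needs to be verified there. For incomparable $x,y\in D$ the lower cone $L_{\mathbb P(\mathbf D^\ast)}(x,y)$ contains both the new bottom $0^\ast$ and the old $0\in D$, so it is never equal to $\{0^\ast\}$, and the inherited value $x\sqcap y\in D\subseteq L_{\mathbb P(\mathbf D^\ast)}(x,y)\setminus\{0^\ast\}$ automatically satisfies whichever of the two incomparable clauses of $\mathbb D^c$ applies. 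The argument is then completed by the collapse map $h\colon\mathbf D^\ast\to\mathbf D$ given by $h(0^\ast):=0$, $h(1^\ast):=1$ and $h|_D:=\mathrm{id}$: surjectivity and preservation of $'$, $0$, $1$ are immediate, and the only $\sqcap$-preservation conditions needing verification are those involving $0^\ast$ or $1^\ast$, which reduce to the boundedness identities $0\sqcap z=0$ and $1\sqcap z=z$ valid in $\mathbf D$. Hence $\mathbf D\in H(\mathcal D_c^\mathcal P)\subseteq V(\mathcal D_c^\mathcal P)$.

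The only delicate point in the whole argument is the canonicity verification: the operation inherited from $\mathbf D$ is a priori an arbitrary directoid meet, not necessarily canonical, so one must confirm that it does not clash with the non-zero requirement attached to the ``$x'<y$'' clause of $\mathbb D^c$. This is precisely where the trick of stacking a fresh bottom pays off---the old $0\in D$ is demoted to a strictly positive element of $\mathbb P(\mathbf D^\ast)$, so the awkward situation $L(x,y)=\{0\}$ never arises for pairs coming from $D$.
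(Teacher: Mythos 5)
Your proof is correct and follows essentially the same strategy as the paper: adjoin two fresh elements to an arbitrary bounded involutive directoid so as to obtain a member of $\mathcal D^{\mathcal P}_c$, then recover the original algebra as a quotient by the congruence collapsing each new element with the adjacent old bound. The only (cosmetic) difference is that you stack a new bottom and top outside the old bounds, whereas the paper inserts a new atom $a$ above $0$ and a new coatom $a'$ below $1$ and reroutes all old zero-meets to $a$; the two constructions yield isomorphic algebras and the same quotient map.
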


\begin{proof}
In order to prove the statement, let us show that any bounded directoid $\mathbf D$ with an antitone involution is a quotient of some $\mathbf M\in\mathcal D^\mathcal P_c$ by mimicking the same technique exploited in the proof of \cite[Theorem~3.2]{GLP16}. So, let $\mathbf D$ be a bounded directoid with an antitone involution and $a,a'\notin D$. Furthermore, define
\[
\mathbf M:=(D\cup\{a,a'\},\sqcap^\mathbf M,{}{'}^\mathbf M,0,1)
\]
such that:
\begin{equation*}
x{'}^\mathbf M=
\begin{cases}
x{'}^\mathbf D & \text{if }x\in D, \\
a'             & \text{if }x=a, \\
a              & \text{if }x=a',
\end{cases}
\end{equation*}
\begin{equation*}
x\sqcap^\mathbf My=
\begin{cases}
0                  & \text{if }0\in\{x,y\}, \\
x\sqcap^\mathbf Dy & \text{if }\{x,y\}\subseteq D\text{ and }x\sqcap^\mathbf Dy\neq0, \\
a                  & \text{if }\{x,y\}\subseteq D\setminus\{0\},x\sqcap^\mathbf Dy=0, \\
a                  & \text{if }a\in\{x,y\}\text{ and }0\notin\{x,y\}, \\
x(y)               & \text{if }a'=y\ (a'=x),\text{and }1\notin\{x,y\}.
\end{cases}
\end{equation*}
Obviously, $(D\cup\{a,a'\},\sqcap^\mathbf M,{}{'}^\mathbf M,0,1)$ is a bounded directoid with an antitone involution. Moreover, $\mathbf M$ satisfies (8). In fact, observe that, for any $x,y\in M$, $x\sqcap y=0$ if and only if $x=0$ or $y=0$. Hence, if $(x\sqcap y)'\sqcap y=0$, then $(x\sqcap y)'=0$ or $y=0$. In the first case, $(x\sqcap y)'=0$ entails $x\sqcap y=1$, since $'$ is an antitone involution. Therefore, we have $x=y=1$ and so $x\sqcap y=1\sqcap1=1=y$. In the second case our conclusion follows trivially upon recalling that $0$ is the least element. We conclude that $\mathbf M\in\mathcal D^\mathcal P_c$. Finally, consider the equivalence relation $\Theta$ defined as follows:
\begin{align*}
[0]\Theta & :=\{0,a\}, \\
[1]\Theta & :=\{1,a'\}, \\
[x]\Theta & :=\{x\}\text{ for }a<x<a'.
\end{align*}
Clearly, $\Theta$ is a congruence. Moreover, it is evident that $\mathbf M/\Theta\cong\mathbf D$.
\end{proof}

There arises the natural question whether there exists a largest quasivariety of paraorthomodular directoids. Namely, one might ask if there exists a quasivariety $\mathcal C$ of paraorthomodular directoids such that, if $\mathcal C_1$ is any other quasivariety of bounded directoids with an antitone involution whose induced bounded posets with an antitone involution are paraorthomodular, then $\mathcal C_1\subseteq\mathcal C$. The next proposition solves the problem in the positive.

\begin{proposition}
\
\begin{enumerate}[{\rm(i)}]
\item The largest quasivariety $\mathcal{PD}$ of paraorthomodular directoids assigned to $\mathcal P$ is axiomatized by $(4)$ and $(5)$ of Proposition~\ref{prop1} and the following quasi-identity:
\begin{enumerate}
\item[{\rm(9)}] $((x\sqcap y)'\sqcap y)\sqcup(y'\sqcap(x\sqcap y))=0$ implies $x\sqcap y=y$.
\end{enumerate}
\item $\mathcal{PD}$ does not form a variety.
\end{enumerate}
\end{proposition}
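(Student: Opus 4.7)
The plan for (i) proceeds in four steps. First, $\mathcal{PD}$ is a quasivariety since it is the model class of two identities and one quasi-identity. Second, I would verify $\mathcal{PD} \subseteq \mathcal{D}^\mathcal{P}$: given $\mathbf{D} \models (4),(5),(9)$, Proposition~\ref{prop1} ensures $'$ is an antitone involution on $\mathbb{P}(\mathbf{D})$, and if $x \leq y$ with $L(x',y) = \{0\}$, then setting $c := x \sqcap y = x$ yields $(x\sqcap y)' \sqcap y = x' \sqcap y \in L(x',y) = \{0\}$ and $y' \sqcap (x \sqcap y) = y' \sqcap x \in L(y',x) \subseteq L(x',y) = \{0\}$ (the inclusion uses $y' \leq x'$ and $x \leq y$), so the antecedent of (9) holds and forces $x = y$. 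Third, $\mathbb{P}(\mathcal{PD}) = \mathcal{P}$: every $\mathbf{P} \in \mathcal{P}$ is induced by $\mathbb{D}^c(\mathbf{P})$, which by Proposition~\ref{prop:dpc form a quasivar} satisfies (8); as (8) has a weaker hypothesis than (9) with the same conclusion, it implies (9), so $\mathbb{D}^c(\mathbf{P}) \in \mathcal{PD}$.

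The maximality step is the technical core. Let $\mathcal{C}$ be any quasivariety with $\mathcal{C} \subseteq \mathcal{D}^\mathcal{P}$ and $\mathbb{P}(\mathcal{C}) = \mathcal{P}$, and let $\mathbf{D} \in \mathcal{C}$. Axioms (4), (5) hold by Proposition~\ref{prop1} since $\mathbb{P}(\mathbf{D})$ carries an antitone involution. Supposing (9) fails at some $a, b \in D$, with $c := a \sqcap b$ one has $c' \sqcap b = 0$, $b' \sqcap c = 0$, and $c < b$ in $\mathbb{P}(\mathbf{D})$. Closure under substructures places the sub-involutive-directoid $\mathbf{E}$ generated by $\{c, b\}$ inside $\mathcal{C} \subseteq \mathcal{D}^\mathcal{P}$. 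The $\sqcap$-operations on $\{0, c, b', b, c', 1\}$ are essentially determined: $c \sqcap b = c$, $c' \sqcap b' = b'$ (forced by $b' \leq c'$), $c' \sqcap b = 0$, $c \sqcap b' = 0$, and the values involving $0, 1$ are trivial; so this six-element set is closed under $\sqcap$ and $'$ and forms an ortho-isomorphic copy of $\mathbf{B}_6$ (Fig.~4), which the forbidden configuration theorem rules out of $\mathcal{P}$---contradicting $\mathbb{P}(\mathbf{E}) \in \mathcal{P}$.

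For (ii), I would argue by contradiction in the style of Proposition~\ref{prop: dcanonical not var}. If $\mathcal{PD}$ were a variety, then since $\mathcal{D}_c^\mathcal{P} \subseteq \mathcal{PD}$ by step three of (i), the variety generated by $\mathcal{D}_c^\mathcal{P}$ would be contained in $\mathcal{PD}$. By the immediately preceding proposition, that variety exhausts the class of bounded directoids with antitone involution, so $\mathcal{PD}$ would coincide with this entire class. But $\mathcal{PD} \subseteq \mathcal{D}^\mathcal{P}$, while any bounded directoid with antitone involution induced from $\mathbf{B}_6$ has non-paraorthomodular induced poset and hence does not belong to $\mathcal{D}^\mathcal{P}$---a contradiction.

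The principal obstacle lies in the maximality argument of (i): the sub-involutive-directoid $\mathbf{E}$ generated by $\{c, b\}$ may carry auxiliary $\sqcap$-values (e.g.\ $c \sqcap c'$, $b \sqcap b'$, or involutions thereof) that, if nonzero in $\mathbf{D}$, could enlarge $E$ and contribute elements to $L^{\mathbf{E}}(c', b) \setminus \{0\}$, thereby rescuing paraorthomodularity and blocking the $\mathbf{B}_6$-configuration. Handling this requires either iterating the subalgebra argument to a new pair of elements at which (9) must still fail, or exploiting closure of $\mathcal{C}$ under direct products and ultraproducts to reduce to a minimal generating configuration amenable to the clean six-element case described above.
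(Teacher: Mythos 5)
Your proof of (i) follows the paper's route step for step: the soundness direction (every model of (4), (5), (9) induces a paraorthomodular poset), which the paper explicitly leaves to the reader and which you fill in correctly; the assignment direction via $\mathbb D^c(\mathbf P)\models(8)$ and the observation that (8) implies (9); and the maximality step via a forbidden $\mathbf B_6$ inside the subdirectoid generated by $\{a\sqcap b,b\}$. For (ii) you take a genuinely different route: the paper reruns the argument of Proposition~\ref{prop: dcanonical not var} (adjoining associativity and regularity to a putative equational base and using that paraorthomodular lattices are not closed under homomorphic images), whereas you combine $\mathcal D_c^{\mathcal P}\subseteq\mathcal{PD}$ with the preceding proposition that the variety generated by $\mathcal D_c^{\mathcal P}$ is the whole class of bounded involutive directoids, together with the fact that the lattice $\mathbf B_6$, viewed as a directoid, lies outside $\mathcal D^{\mathcal P}$. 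Your version of (ii) is correct and arguably cleaner.

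The obstacle you flag in the maximality step is a genuine gap, but you should know that the paper does not close it either: its entire justification is the sentence that it is ``easily noticed'' that the subdirectoid generated by $\{a\sqcap b,b\}$ induces a copy of $\mathbf B_6$, which is precisely the claim you distrust. Writing $c:=a\sqcap b$, the hypotheses give $c<b$, $c'\sqcap b=0$ and $b'\sqcap c=0$, but they say nothing about $c\sqcap c'$ or $b\sqcap b'$; since $c\le b$, every element of $L(c,c')$ lies in $L(c',b)$, so if $\mathbf D$ happens to choose $c\sqcap c'$ to be a nonzero element $u$ of $L(c,c')$, then $u$ belongs to the generated subdirectoid and witnesses $L(c',b)\neq\{0\}$ there. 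This can actually occur: take the eight-element poset with $0<u<c<b<1$, $b'<c'<u'$ and $u<c'$, so that $L(c',b)=\{0,u\}$; it is paraorthomodular, and the directoid obtained by choosing $c'\sqcap b=b'\sqcap c=0$ but $c\sqcap c'=u$ is a paraorthomodular directoid violating (9) at $(c,b)$, while the subdirectoid generated by $\{c,b\}$ is the whole algebra and its induced poset is the original paraorthomodular poset, not $\mathbf B_6$. So the maximality argument as it stands breaks on such examples; repairing it requires producing a non-paraorthomodular member of the generated quasivariety by other means (subalgebras of powers, say), and neither your proposal nor the published proof does this. In short: every step you completed is correct, your (ii) is a valid alternative, and the one step you could not finish is exactly the step the paper asserts without proof.
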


\begin{proof}
\
\begin{enumerate}[(i)]
\item Verifying that, for any $\mathbf D\in\mathcal{PD}$, $\mathbb P(\mathbf D)$ is a paraorthomodular poset is straightforward and therefore it is left to the reader. Now let $\mathbf P$ be a paraorthomodular poset. Let us consider $\mathbb D^c(\mathbf P)$. By Proposition~\ref{prop:dpc form a quasivar}, $\mathbb D^c(\mathbf P)$ satisfies (8) and so one can easily see that it satisfies (9), too. We conclude $\mathbb D^c(\mathbf P)\in\mathcal{PD}$ and so $\mathcal{PD}$ is assigned to $\mathcal P$. Finally, assume by way of contradiction that $\mathcal D$ is a quasi-variety of paraorthomodular directoids such that, for some $\mathbf D\in\mathcal D$, $\mathbf D$ does not satisfy (9). This means that there exist $a,b\in D$ such that
\[
((a\sqcap b)'\sqcap b)\sqcup(b'\sqcap(a\sqcap b))=0,
\]
but $a\sqcap b<a$. It is easily noticed that the bounded poset with involution induced by the involutive subdirectoid of $\mathbf D$ generated by $\{a\sqcap b,b\}$ is ortho-isomorphic to $\mathbf B_6$. Hence, $\mathcal D$ is not closed under substructures and thus it does not form a quasivariety contradicting our assumptions. We conclude that $\mathcal D\subseteq\mathcal{PD}$.
\item This claim follows by applying the same argument from Proposition~\ref{prop: dcanonical not var}.
\end{enumerate}
\end{proof}

In what follows we present another and stronger generalization of paraorthomodular lattices.

\begin{definition}\label{def1}
A {\em sharply paraorthomodular poset} is a bounded poset $(P,\leq,{}',0,1)$ with an antitone involution $'$ satisfying for all $x,y\in P$ the conditions
\begin{enumerate}
\item[{\rm(10)}] $x\leq y'$ implies that $x\vee y$ exists,
\item[{\rm(11)}] $x\leq y$ and $x'\wedge y=0$ imply $x=y$.
\end{enumerate}
\end{definition}

Let us note that (10) and (11) coincide with (2) and (P), respectively.

Of course, every sharply paraorthomodular poset is paraorthomodular.

It is easy to see that in every poset $(P,\leq,{}')$ with an antitone involution the following conditions are equivalent:
\begin{enumerate}
\item[(12)] $x\leq y'$ implies that $x\vee y$ exists,
\item[(13)] $x\leq y$ implies that $x'\wedge y$ exists.
\end{enumerate}
Examples of sharply paraorthomodular posets abound in quantum structures literature. In fact, any orthomodular poset and any $\{\leq,',0,1\}$-reduct $\mc P(\alga)$ of a lattice effect algebra $\mb A$ are obviously sharply paraorthomodular, by virtue of Lemma \ref{lem: ompparaorthomod} and Proposition \ref{prop}, respectively. Furthermore, any $\{\leq,',0,1\}$-reduct $\mc P(\alga)$ of a $4$-loop pasting $\mb A$ (of an admissible system $\mc F$) of MV-algebras \cite[Definition 12]{FCHO} is an example of a proper sharply paraorthomodular poset. Indeed, by \cite[pp. 356--357]{FCHO}, $\mb A$ is an effect algebra. So, by Proposition \ref{prop}, $\mc P(\alga)$ is paraorthomodular. Moreover, by \cite[Theorem 16]{FCHO}, although $\mb A$ is not lattice-ordered, joins of orthogonal elements always exist in $A$. Hence, $\mc P(\alga)$ is  sharply paraorthomodular.

We are going to characterize sharply paraorthomodular posets by means of assigned commutative directoids.

\begin{lemma}\label{lem3}
Let $\mathbf P=(P,\leq,{}',0,1)$ be a bounded poset with an antitone involution $'$ and $\mathbb D(\mathbf P)=(P,\sqcap,{}',0,1)$ an assigned commutative directoid. Then $\mathbf P$ satisfies {\rm(10)} if and only if $\mathbb D(\mathbf P)$ satisfies the identity
\begin{enumerate}
\item[{\rm(14)}] $z\leq((x\sqcup z)\sqcup(y\sqcup z)')\sqcap(y\sqcup z)$.
\end{enumerate}
\end{lemma}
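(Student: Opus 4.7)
My plan is to unpack (14) using the abbreviations $A := x\sqcup z$, $B := y\sqcup z$, $C := A\sqcup B'$, and to exploit three basic facts valid in any commutative directoid assigned to a bounded poset with antitone involution: (i) $z\leq A\leq C$ and $z\leq B$, so $z$ is a common lower bound of $C$ and $B$; (ii) $B'\leq C$, whence $C'\leq B$ by antitony of $'$; (iii) $u\sqcap v = u\wedge v$ whenever the poset meet $u\wedge v$ exists. Throughout I also invoke the standard De Morgan equivalence: $u\vee v$ exists if and only if $u'\wedge v'$ exists, and then $(u\vee v)' = u'\wedge v'$.

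For the forward direction, assume (10) in $\mathbf P$. The relation $C'\leq B=(B')'$ lets me apply (10) to the pair $(C',B')$, so $C'\vee B'$ exists in $\mathbf P$; by De Morgan, $C\wedge B$ exists. Since $z$ is a common lower bound of $C$ and $B$, we have $z\leq C\wedge B = C\sqcap B$, which is precisely (14).

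For the reverse direction, assume that $\mathbb D(\mathbf P)$ satisfies (14) and let $u\leq v'$ in $\mathbf P$. Set $\alpha:=u'$ and $\beta:=v'$, so that $\beta'=v\leq u'=\alpha$. The decisive substitution is $x:=\alpha$, $y:=\beta$, $z:=w$, with $w$ any common lower bound of $\alpha$ and $\beta$. Then $A=\alpha\sqcup w=\alpha$ and $B=\beta\sqcup w=\beta$; moreover $\beta'\leq\alpha$ collapses $C=\alpha\sqcup\beta'$ to $\alpha$. Hence $C\sqcap B=\alpha\sqcap\beta$, and (14) rewrites as $w\leq\alpha\sqcap\beta$. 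Since $\alpha\sqcap\beta$ is itself a common lower bound of $\alpha$ and $\beta$, this forces it to be their greatest common lower bound, so $\alpha\wedge\beta=u'\wedge v'$ exists in $\mathbf P$. De Morgan then yields the existence of $u\vee v$, establishing (10).

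The main subtlety is recognizing the substitution in the reverse direction: $\beta'\leq\alpha$ forces $\alpha\sqcup\beta'$ to equal $\alpha$ regardless of how $\sqcup$ was chosen on non-existing joins, and this is exactly what reduces (14) to the unambiguous inequality $w\leq\alpha\sqcap\beta$, pinning down the meet. Once this substitution is spotted, both directions are short calculations.
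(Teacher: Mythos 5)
Your proof is correct and follows essentially the same route as the paper's: in the forward direction you use the orthogonality $C'\leq B$ to force the existence of the meet $C\wedge B$ (the paper phrases this via the equivalent condition (13) applied to $d'\leq e$), and in the reverse direction you specialize (14) at $x:=u'$, $y:=v'$ and an arbitrary common lower bound $z:=w$ so that the identity collapses to $w\leq u'\sqcap v'$, pinning down $u'\sqcap v'$ as the greatest lower bound. The only cosmetic difference is that you carry out the De Morgan conversion between (10) and (13) inline instead of citing the stated equivalence of (12) and (13).
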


\begin{proof}
Let $a,b,c\in P$. First assume that $\mathbf P$ to satisfy condition (10) which is equivalent to (13). Put
\begin{align*}
d & :=(a\sqcup c)\sqcup(b\sqcup c)', \\
e & :=b\sqcup c.
\end{align*}
Then $e'\leq d$, i.e.\ $d'\leq e$ and hence by (13) $d\wedge e=d''\wedge e$ exists, thus $d\sqcap e=d\wedge e$, and because of $c\leq d,e$ we have
\[
c\leq d\wedge e=d\sqcap e=((a\sqcup c)\sqcup(b\sqcup c)')\sqcap(b\sqcup c),
\]
i.e.\ (14) holds. Conversely, assume (14) holds. If $a\leq b$ and $c\leq a',b$ then
\[
c\leq((a'\sqcup c)\sqcup(b\sqcup c)')\sqcap(b\sqcup c)=(a'\sqcup b')\sqcap b=a'\sqcap b
\]
which means $a'\sqcap b=a'\wedge b$ and hence $a'\wedge b$ exists, i.e.\ (13) and therefore also (10) holds.
\end{proof}

Now, we characterize sharply paraorthomodular posets by an identity and a quasi-identity of an assigned commutative directoid as follows.

\begin{theorem}\label{thm: sharply paraorthdir}
Let $\mathbf P=(P,\leq,{}',0,1)$ be a bounded poset with an antitone involution $'$ and $\mathbb D(\mathbf P)=(P,\sqcap,{}',0,1)$ an assigned commutative directoid. Then $\mathbf P$ is sharply para\-ortho\-mod\-u\-lar if and only if $\mathbb D(\mathbf P)$ satisfies identity {\rm(14)} and quasi-identity {\rm(8)}.
\end{theorem}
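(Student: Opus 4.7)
The plan is to decompose sharp paraorthomodularity into its two defining conditions (10) and (11), handle (10) by invoking Lemma~\ref{lem3} (which already equates it with identity (14)), and show that under (10) the surviving condition (11) is equivalent to the quasi-identity (8). This reduction is natural because (8) is stated purely in terms of $\sqcap$, whereas the original (11) mentions $\wedge$, so one needs (10) (equivalently, (13)) as the ``existence provider'' translating $\sqcap$ into an actual meet in the relevant situation.

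For the \emph{only if} direction, I would first apply Lemma~\ref{lem3} to conclude that $\mathbb D(\mathbf P)$ satisfies (14). To derive (8), suppose $(a\sqcap b)'\sqcap b=0$. Since $a\sqcap b\leq b$, condition (13) (equivalent to (10), which holds by assumption) guarantees that $(a\sqcap b)'\wedge b$ exists. By the definition of an assigned commutative directoid, whenever the meet of two elements exists it coincides with $\sqcap$; hence $(a\sqcap b)'\wedge b=(a\sqcap b)'\sqcap b=0$. Applying (11) to the pair $a\sqcap b\leq b$ then yields $a\sqcap b=b$, as required.

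For the \emph{if} direction, Lemma~\ref{lem3} again gives (10) from (14). To verify (11), assume $a\leq b$ and $a'\wedge b=0$. From $a\leq b$ we have $a\wedge b=a$, and since this meet exists, $a\sqcap b=a$, so $(a\sqcap b)'=a'$. Because $a'\wedge b$ also exists, $a'\sqcap b=a'\wedge b=0$, hence $(a\sqcap b)'\sqcap b=0$. The quasi-identity (8) then yields $a\sqcap b=b$, i.e.\ $a=b$. Thus $\mathbf P$ satisfies (11), completing the proof.

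I do not expect a serious obstacle: the whole argument is a careful bookkeeping exercise translating between $\wedge$ and $\sqcap$. The only delicate point is to make sure that the meets appearing on the poset side actually exist before identifying them with the corresponding $\sqcap$-expressions, which is exactly what condition (10) (equivalently (13), equivalently (14)) guarantees in each direction. Once this is said cleanly, both implications reduce to a one-line application of the relevant axiom.
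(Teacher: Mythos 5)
Your proposal is correct and follows essentially the same route as the paper's proof: both directions use Lemma~\ref{lem3} to dispose of (10) via (14), and then translate between $\wedge$ and $\sqcap$ exactly as the paper does, using (13) to guarantee existence of the relevant meet before identifying it with the $\sqcap$-expression. No gaps.
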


\begin{proof}
Let $a,b\in P$. First assume $\mathbf P$ to be sharply paraorthomodular. Then, because of Lemma~\ref{lem3}, (14) holds. If $(a\sqcap b)'\sqcap b=0$ then, since $a\sqcap b\leq b$ and (13) is equivalent to (10), we have that $(a\sqcap b)'\wedge b$ exists and hence $(a\sqcap b)'\wedge b=(a\sqcap b)'\sqcap b=0$ whence $a\sqcap b=b$ according to (11), i.e.\ (8) holds. Conversely, assume $\mathbb D(\mathbf P)$ satisfies (14) and (8). Then, because of Lemma~\ref{lem3}, (10) holds. If $a\leq b$ and $a'\wedge b=0$ then $(a\sqcap b)'\sqcap b=a'\sqcap b=a'\wedge b=0$ whence by (8) $a=a\sqcap b=b$, i.e.\ (11) holds.
\end{proof}

\begin{corollary}
The class of commutative directoids assigned to the class of sharply paraorthomodular posets forms a quasivariety of algebras.
\end{corollary}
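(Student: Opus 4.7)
My plan is to invoke the preceding Theorem~\ref{thm: sharply paraorthdir} directly and observe that the axiomatization it provides is in fact a list of quasi-identities, so the class in question is definable by quasi-identities and hence is a quasivariety.

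First I would recall what a commutative directoid assigned to a sharply paraorthomodular poset looks like. By definition, such an algebra $(D,\sqcap,{}',0,1)$ is a bounded commutative directoid (the identities of idempotence, commutativity, and weak associativity are satisfied), equipped with a unary $'$ and constants $0,1$ such that $\mathbb P(\mathbf D)$ is a bounded poset with antitone involution which is sharply paraorthomodular. By Proposition~\ref{prop1}, the antitone involution requirement is equivalent to identities (4) and (5). By Theorem~\ref{thm: sharply paraorthdir}, the sharp paraorthomodularity is then equivalent to the conjunction of identity (14) and quasi-identity (8). Conversely, I would point out that these conditions are sufficient: any bounded commutative directoid with a unary operation satisfying (4), (5), (14) and (8) induces, via $x\leq y$ iff $x\sqcap y=x$, a sharply paraorthomodular poset, and the directoid is (trivially) assigned to it.

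Therefore the class in question is axiomatized by the list
\[
\{\text{idempotence, commutativity, weak associativity, }(4),(5),(14),(8)\}.
\]
All items except (8) are identities; (8) itself has the form $\phi\Rightarrow\psi$ where both $\phi$ and $\psi$ are single equations, hence a quasi-identity in the usual sense. Identities are quasi-identities with empty premise, so the entire axiomatization consists of quasi-identities.

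The only subtle point, and what I would verify carefully, is that the class is really closed under isomorphism and that the constants $0,1$ and unary operation $'$ can be treated uniformly as part of the signature (so that substructures and products are taken in the signature $(\sqcap,{}',0,1)$). This is routine: the reducts are preserved, $0$ and $1$ appear only in (14) as part of the formulation through $\sqcup$, which is itself a derived operation from $\sqcap$ and $'$ via De Morgan, so everything stays within the signature. Being definable by a (countable) set of quasi-identities, the class is a quasivariety by Mal'cev's theorem, which completes the proof. The main ``obstacle''---if there is one---is simply confirming that the sharp paraorthomodularity axiom (11) is captured purely by quasi-identity (8) and not by a stronger form of implication; but Theorem~\ref{thm: sharply paraorthdir} already takes care of this, so the corollary is essentially a bookkeeping consequence.
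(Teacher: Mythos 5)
Your proof is correct and is exactly the argument the paper intends (the corollary is stated there without proof, as an immediate consequence of Theorem~\ref{thm: sharply paraorthdir}): the class is defined by the directoid identities together with (4), (5), (14) and the quasi-identity (8), all of which are quasi-identities in the signature $(\sqcap,{}',0,1)$. The only point you wave through with ``(trivially) assigned'' is the converse inclusion, namely that every model of these axioms is an assigned directoid of its induced poset; this is unproblematic under the paper's de facto (loose) use of ``assigned,'' and for the meets actually relevant to sharp paraorthomodularity --- those of the form $x'\wedge y$ with $x\le y$ --- identity (14) itself forces $\sqcap$ to compute them, as the converse direction of the proof of Lemma~\ref{lem3} shows.
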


In what follows, we will call any member from the quasivariety of commutative directoids assigned to sharply paraorthomodular posets a \emph{paraorthomodular directoid}.

\subsection{Sharp quantum structures as subvarieties of $\mathcal D^\mathcal P_c$}
As it has been recalled in Section \ref{sec:1}, given an effect algebra $\mb A=(A,\oplus,0,1)$ and $x,y\in P$, $x\oplus y$ need not exist provided that $x\not\leq y'$. Therefore, $\mb A$ is a partial algebra. In \cite{CKH}, the first author, R. Hala\v s and J. K\"uhr  show that any effect algebra $\mb A$ can be made into a total algebra by means of organizing its underlying induced poset into a commutative directoid and then term-defining a new total binary operation $\overline{\oplus}$ in terms of $\sqcap$ and $\oplus$. Furthermore, if $\mb A$ is an orthomodular poset, then one can do more. In fact, any orthomodular poset $\mb P$ can be arranged into a total algebra by assigning a bounded commutative directoid $\mathbb{D}(\mb P)$ with antitone involution to $\mb P$ satisfying further equational conditions.

\begin{definition}[cf.\ \cite{CK}]\label{def: orthomodulardirectoid} An \emph{orthomodular} directoid is a bounded commutative directoid with antitone involution $\mb D = (D,\sqcap,{}',0,1)$ satisfying \begin{enumerate}
\item $x\sqcup x'\approx 1$,
\item $(((x\sqcup z)\sqcup(y\sqcup z)')\sqcap(y\sqcup z))\sqcap z\approx z$,
\item $(x\sqcap y)\sqcup((x\sqcap y)\sqcup y')'\approx y$,
\end{enumerate}
where $x\sqcup y = (x'\sqcap y')'$.
\end{definition}
Since a directoid $\mb D=(D,\sqcap,{}')$ with antitone involution is a lattice if and only if $\sqcap$ is associative (see \cite{CL}), it is easily seen that the variety of orthomodular lattices $\mc{OML}$ coincides with the variety of associative orthomodular directoids.
 
Before stating and proving the main results of this section, with the next proposition we clarify the relationship between orthomodular and sharply paraorthomodular directoids (posets). 
\begin{proposition} A sharply paraorthomodular directoid $\mb D$ is orthomodular if and only if it satisfies $1.$ of Definition \ref{def: orthomodulardirectoid}.
\end{proposition}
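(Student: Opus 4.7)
The plan is to prove the biconditional by splitting on direction, with the converse reducing first to establishing that the induced poset $\mathbf{P}:=\mathbb{P}(\mathbf{D})$ is an orthomodular poset. The forward direction is immediate from Definition \ref{def: orthomodulardirectoid}. For the converse, I would verify the three defining identities of an orthomodular directoid. Identity 1 holds by hypothesis. Identity 2 is merely the restatement of (14) via the equivalence $u\leq v\iff v\sqcap u=u$ in a commutative directoid, and since every sharply paraorthomodular directoid satisfies (14) by Theorem \ref{thm: sharply paraorthdir}, identity 2 comes for free. Thus only identity 3 requires genuine work.

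My first concrete step would be to show that $'$ is a complementation on $\mathbf{P}$. The assumption $x\sqcup x'=1$ unfolds to $x\sqcap x'=0$ via the definition $u\sqcup v=(u'\sqcap v')'$. Now (14) is the directoid avatar of (10), which is equivalent to its dual (13); applied with $x\leq x$, (13) guarantees that $x\wedge x'$ exists in $\mathbf{P}$, and since $\sqcap$ agrees with $\wedge$ whenever the latter exists, $x\wedge x'=x\sqcap x'=0$, and dually $x\vee x'=1$. Next I would derive the orthomodular law in $\mathbf{P}$: given $a\leq b$, set $m:=b\wedge a'$ (exists by (13)) and $j:=a\vee m$ (exists by (10), since $a\leq m'$). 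By another use of (13), $j'\wedge b$ exists, and since $j'=(a\vee m)'=a'\wedge m'$, this meet equals the infimum of $\{a',m',b\}$; any lower bound of this set is $\leq a'$ and $\leq b$, hence $\leq m$, and also $\leq m'$, so $\leq m\wedge m'=0$. Thus $j'\wedge b=0$, and (11) forces $j=b$, which is the orthomodular law.

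With $\mathbf{P}$ now known to be an orthomodular poset, identity 3 follows quickly. Setting $a:=x\sqcap y$, De Morgan rewrites the left-hand side of identity 3 as $a\sqcup(a'\sqcap y)$. Since $a\leq y$, (13) ensures $a'\wedge y$ exists, so $a'\sqcap y=a'\wedge y$; and the orthomodular law yields $a\vee(a'\wedge y)=y$, which in particular exists. Since $\sqcup$ reproduces the poset join whenever the latter exists (dualizing the corresponding property of $\sqcap$), we obtain $a\sqcup(a'\sqcap y)=a\vee(a'\wedge y)=y$, as required.

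The main obstacle I foresee is the middle paragraph---the derivation of the orthomodular law in $\mathbf{P}$ from (11) plus complementation. The subtlety is that (11) can only be applied once the relevant meets are known to exist; one must repeatedly invoke (13) to secure the existence first of $m=b\wedge a'$ and then of $j'\wedge b$ before the annihilation argument via $m\wedge m'=0$ becomes available. In particular, this is the step that uses \emph{sharp} paraorthomodularity (through (10)/(13)) rather than the weaker condition (P*); without the existence guarantees of the sharp version, one cannot manipulate meets freely enough to put (11) to work.
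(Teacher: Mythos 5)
Your proof is correct, and its mathematical core coincides with the paper's: both arguments reduce identity 3 of Definition~\ref{def: orthomodulardirectoid} to an application of paraorthomodularity ((8), equivalently (11)) to the element $w=(x\sqcap y)\sqcup((x\sqcap y)\sqcup y')'$, after showing $w\le y$ and annihilating $w'\wedge y$ by rearranging a triple meet so that the complementary pair supplied by identity 1 forces it to $0$. The difference is one of decomposition. The paper verifies identity 3 directly by a term computation inside the directoid: it uses Theorem~\ref{thm: sharply paraorthdir} to see that $w$ is a genuine join of orthogonal elements below $y$, and then computes $w'\sqcap y=((x\sqcap y)\sqcup y')'\sqcap((x\sqcap y)\sqcup y')=0$. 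You instead first prove that the induced poset $\mathbb P(\mathbf D)$ is an orthomodular poset --- deriving $x\wedge x'=0$ and $x\vee x'=1$ from identity 1, and then the orthomodular law $b=a\vee(b\wedge a')$ via the existence bookkeeping provided by (10)/(13) --- and only afterwards translate back to the directoid identity using the fact that $\sqcap$ and $\sqcup$ reproduce existing meets and joins. Your detour is slightly longer but makes explicit the order-theoretic content of the proposition (a sharply paraorthomodular poset whose involution is a complementation is orthomodular), which the paper leaves implicit; the paper's computation is shorter but stays entirely at the level of terms. You are also right that the delicate point is securing the existence of the relevant meets before (11) can be invoked --- that is precisely where sharpness, i.e.\ (10)/(13), is consumed in both arguments, and your handling of it (including the observation that $(u\wedge v)\wedge w$ is the infimum of $\{u,v,w\}$ whenever both binary meets exist) is sound.
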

\begin{proof}
Concerning the non-trivial direction, just note that $(x\sqcap y)\sqcup((x\sqcap y)\sqcup y')'=(x\sqcap y)\lor((x\sqcap y)\sqcup y')'\leq y$, since $(x\sqcap y)\leq ((x\sqcap y)\sqcup y')'' =((x\sqcap y)\sqcup y')$,  $'$ is an antitone involution and by Theorem \ref{thm: sharply paraorthdir}. Hence $(x\sqcap y)\sqcup((x\sqcap y)\sqcup y')'\sqcap y=(x\sqcap y)\sqcup((x\sqcap y)\sqcup y')'$. Moreover, it is easily seen that 
\begin{align*}
 [(x\sqcap y)\sqcup((x\sqcap y)\sqcup y')']'\sqcap y=& [(x\sqcap y)'\sqcap ((x\sqcap y)\sqcup y')]\sqcap y\\
 =&  [(x\sqcap y)'\land ((x\sqcap y)\sqcup y')]\land y\\
 =& ((x\sqcap y)'\land y)\land((x\sqcap y)\sqcup y')\\
 =& ((x\sqcap y)\sqcup y')'\sqcap((x\sqcap y)\sqcup y') = 0,
\end{align*}
by $1.$ upon noticing that $x\sqcap x' = (x\sqcup x')' = 1' = 0$. Therefore, by Theorem \ref{thm: sharply paraorthdir}, one has $(x\sqcap y)\sqcup((x\sqcap y)\sqcup y')'=(x\sqcap y)\sqcup((x\sqcap y)\sqcup y')'\sqcap y = y$.
\end{proof}
The next results are easy adaptations of Theorem 2 and Theorem 3 from \cite{CK}. Firstly, we have the following

\begin{theorem}\label{orthomodulardir->orthomodularpos} Let $\mb D=(D,\sqcap,{}',0,1)$ be an orthomodular directoid and $\leq$ be its induced order. Then $\mathbb{P}(\mb D)=(D,\leq,{}',0,1)$ is an orthomodular poset where for orthogonal elements $x,y\in D$ we have \[x\sqcup y = x\lor y.\]
\end{theorem}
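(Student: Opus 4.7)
My plan is to route through sharp paraorthomodularity (Theorem~\ref{thm: sharply paraorthdir}), upgrade the resulting poset to an orthomodular poset by establishing complementation from axiom~(1) of Definition~\ref{def: orthomodulardirectoid}, derive the orthomodular law, and finally identify $x\sqcup y$ with $x\vee y$ on orthogonal pairs using the argument of Lemma~\ref{lem3}. The three axioms of an orthomodular directoid correspond, in effect, to complementation, sharp paraorthomodularity, and the orthomodular law respectively, so the proof largely consists in extracting these from the equational presentation.

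First I would observe that axioms~(2) and~(3) of Definition~\ref{def: orthomodulardirectoid} deliver the hypotheses of Theorem~\ref{thm: sharply paraorthdir}. Axiom~(2) is identity~(14) verbatim; axiom~(3), after rewriting the second summand via De~Morgan and the involution as $(x\sqcap y)'\sqcap y$, reads $(x\sqcap y)\sqcup((x\sqcap y)'\sqcap y)\approx y$, from which quasi-identity~(8) follows at once: if $(x\sqcap y)'\sqcap y=0$, then $y=(x\sqcap y)\sqcup 0=x\sqcap y$. Hence $\mathbb P(\mathbf D)$ is sharply paraorthomodular, supplying conditions~(10) and~(11) of the orthomodular poset definition.

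The decisive step is showing that $'$ is a complementation. Axiom~(1) gives $x\sqcup x'=1$ and, by De~Morgan, $x\sqcap x'=0$. To upgrade $x\sqcap x'=0$ to the genuine meet $x\wedge x'=0$, I would take an arbitrary $z\in L(x,x')$ and substitute $y:=x'$ in axiom~(2): since $z\leq x,x'$ forces $x\sqcup z=x$ and $x'\sqcup z=x'$, the left-hand side collapses to $(x\sqcap x')\sqcap z=0\sqcap z=0$, which must equal $z$, so $z=0$. Hence $x\wedge x'=0$, and applying $'$ gives $x\vee x'=1$. For the orthomodular law, fix $x\leq y$; condition~(13) yields $x'\wedge y$, condition~(10) then gives $w:=x\vee(x'\wedge y)$ with $x\leq w\leq y$, and I would verify $w=y$ via~(11): any common lower bound $t$ of $w'$ and $y$ satisfies $t\leq x'$, $t\leq(x'\wedge y)'$ and $t\leq y$, hence $t\leq(x'\wedge y)\wedge(x'\wedge y)'=0$ by the complementation just established.

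Finally, for orthogonal $x\leq y'$, equivalently $y\leq x'$, I would reprise the calculation in the proof of Lemma~\ref{lem3} with $a:=y$, $b:=x'$: any $c\in L(y',x')$ satisfies $c\leq y'\sqcap x'=x'\sqcap y'$, so $x'\sqcap y'=x'\wedge y'$ is the infimum. Since $'$ is an antitone involution on the induced poset, $(x\vee y)'=x'\wedge y'$, and therefore $x\sqcup y=(x'\sqcap y')'=(x'\wedge y')'=x\vee y$. The main obstacle is the complementation paragraph: recognising that the cluttered axiom~(2), under the specific substitution $y:=x'$ together with $z$ an arbitrary common lower bound of $\{x,x'\}$, collapses to the telegraphic equality $0\sqcap z=z$, which alone forces $z=0$.
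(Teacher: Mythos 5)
Your proof is correct, but it does not follow the paper's route for the simple reason that the paper gives none: Theorem~\ref{orthomodulardir->orthomodularpos} is stated as an ``easy adaptation'' of Theorem~2 of \cite{CK} and left unproved. Your argument is therefore a genuinely different, self-contained derivation built from the paper's own machinery, and every step checks out. The reduction to Theorem~\ref{thm: sharply paraorthdir} is sound: axiom~(2) of Definition~\ref{def: orthomodulardirectoid} is identity~(14) read as an identity in the induced order, and axiom~(3), rewritten via $y''\approx y$ and De~Morgan as $(x\sqcap y)\sqcup((x\sqcap y)'\sqcap y)\approx y$, yields quasi-identity~(8) because $a\sqcup 0=a$; this gives conditions~(10) and~(11). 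The genuinely nontrivial contribution is your extraction of complementation, and it is valid: for $z\in L(x,x')$ the substitution $y:=x'$ in axiom~(2) collapses, using $x\sqcup z=x$, $x'\sqcup z=x'$, $x\sqcup x=x$ and $x\sqcap x'=(x\sqcup x')'=1'=0$, to $0\sqcap z\approx z$, forcing $L(x,x')=\{0\}$ and dually $U(x,x')=\{1\}$. The orthomodular law then follows as you describe, since any $t\in L(w',y)$ with $w=x\vee(x'\wedge y)$ satisfies $t\le x'\wedge y$ and $t\le(x'\wedge y)'$, hence $t=0$, and~(11) applies; and the identification $x\sqcup y=x\vee y$ on orthogonal pairs via the converse computation of Lemma~\ref{lem3} (showing $x'\sqcap y'$ is the greatest element of $L(x',y')$) is exactly right. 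What your route buys is a proof entirely internal to the paper, at the modest cost of the delicate observation that complementation is hidden in axiom~(2) rather than postulated outright.
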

Conversely, one has also
\begin{theorem}\label{orthomodularpos->orthomodulardir} Let $\mb P=(P,\leq,{}',0,1)$ be an orthomodular poset. Define a binary operation $\sqcup$ on $P$ as follows:
\begin{equation*}
y\sqcup x=x\sqcup y:=
\begin{cases}
x\lor y         & \text{if } x\lor y\ \text{exists}, \\
z\in U(x,y) & \text{otherwise}.
\end{cases}
\end{equation*}
Then $\mathbb{D}(\mb P)=(P,\sqcap,',0,1)$, where $x\sqcap y=(x'\sqcup y')'$, is an orthomodular directoid.
\end{theorem}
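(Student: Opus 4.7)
The plan is to verify, clause by clause, each defining property of an orthomodular directoid for the algebra $\mathbb{D}(\mb P)=(P,\sqcap,{}',0,1)$, exploiting the orthomodular law throughout to eliminate the apparent arbitrariness in the definition of $\sqcup$.

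I would first confirm that $\mathbb{D}(\mb P)$ is a bounded commutative directoid with antitone involution assigned to $(P,\leq)$. Idempotence of $\sqcup$ is immediate since $x\lor x=x$ always exists, commutativity is built into the definition, and weak associativity follows from the observation that $a\geq z$ forces $a\lor z=a$, hence $a\sqcup z=a$ (applied with $a:=x\sqcup(y\sqcup z)$); all these transfer to $\sqcap$ by De~Morgan. Moreover, in any poset with antitone involution $x\land y$ exists if and only if $x'\lor y'$ does, and then $(x'\lor y')'=x\land y$; this shows that $\sqcap$ genuinely extends the partial meet of $\mb P$. That $'$ is an antitone involution on the directoid is then the content of Proposition~\ref{prop1}, whose identities $(4)$ and $(5)$ are trivially satisfied.

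It remains to verify the three axioms of Definition~\ref{def: orthomodulardirectoid}. Axiom~1 is immediate, since $'$ is a complementation in $\mb P$, so $x\lor x'=1$ exists and equals $x\sqcup x'$. For Axiom~3, set $u:=x\sqcap y$; since $u\leq y$, condition $(2)$ of the orthomodular poset definition gives that $u\lor y'$ exists, whence $u\sqcup y'=u\lor y'$ and $(u\sqcup y')'=u'\land y$ by De~Morgan. The orthomodular law applied to $u\leq y$ yields $y=u\lor(y\land u')$, i.e.\ $u\sqcup(u'\land y)=y$, as required.

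The main technical point lies in Axiom~2. Putting $A:=(x\sqcup z)\sqcup(y\sqcup z)'$ and $B:=y\sqcup z$, we have $(y\sqcup z)'\leq A$, that is, $A'\leq B$. I would then invoke the orthomodular law in the form $B=A'\lor(B\land A)$ to conclude that $B\land A$ exists in $\mb P$, so that $A\sqcap B=A\land B$ and any potential ambiguity in the directoid operation collapses. Since clearly $z\leq A$ and $z\leq B$, we get $z\leq A\sqcap B$, and therefore $(A\sqcap B)\sqcap z=z$. This is precisely the hard part: once one recognizes that orthomodularity guarantees the existence of $B\land A$, the arbitrary choice of an element of $L(A,B)$ is forced to be the genuine meet and automatically dominates $z$; the remainder of the argument is routine bookkeeping with De~Morgan's laws.
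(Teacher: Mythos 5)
Your verification is correct. Note, however, that the paper offers no proof of this theorem at all: it merely remarks that Theorems~\ref{orthomodulardir->orthomodularpos} and \ref{orthomodularpos->orthomodulardir} are ``easy adaptations of Theorem 2 and Theorem 3 from [CK]'' (Chajda--Kola\v r\'ik), so there is no in-paper argument to compare against; what you have written supplies exactly the verification that the paper delegates to that reference, and it is the natural one. Your identification of the crux is right: in Axioms 2 and 3 the a priori arbitrary value of the directoid operation is forced to be the genuine meet (resp.\ join) because orthomodularity guarantees that the relevant meet exists, after which the computation is routine.

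One small point of attribution you should tighten: in Axiom 2 you say you ``invoke the orthomodular law in the form $B=A'\lor(B\land A)$ to conclude that $B\land A$ exists.'' The existence of $B\land A$ is not delivered by the orthomodular law (3) -- that law presupposes the meet -- but by axiom (2) of orthomodular posets, equivalently condition (13) of the paper: $A'\le B$ implies that $A''\land B=A\land B$ exists. The same remark applies in your Axiom 3 step, where $u\lor(u'\land y)$ exists because $u\le(u'\land y)'$ and one applies (2) again. Since the paper explicitly records that (2) makes the expressions in (3) well defined, this is a matter of citing the right axiom rather than a gap; with that adjustment the argument is complete.
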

By Theorem \ref{orthomodularpos->orthomodulardir}, to every orthomodular poset $\mb P=(P,\leq,{}',0,1)$ there can be assigned a total algebra $\mathbb{D}(\mb P)$ which is an orthomodular directoid. By Theorem \ref{orthomodulardir->orthomodularpos}, one can assign an orthomodular poset $\mathbb{P}(\mathbb{D}(\mb P))$ to $\mathbb{D}(\mb P)$. Since the underlying bounded poset with antitone involution $(P,\leq,{}',0,1)$ coincides in $\mb P$, $\mathbb{D}(\mb P)$ and $\mathbb{P}(\mathbb{D}(\mb P))$, we conclude that $\mb P= \mathbb{P}(\mathbb{D}(\mb P))$. However, given an orthomodular directoid $\mb D$, then $\mb D=\mathbb{D}(\mathbb{P}(\mb D))$ need not hold. In fact, by the definition of $\sqcup$, $x\sqcup y$ can be defined in many different ways provided that $x\lor y$ does not exist.

Recall that an {\em orthoalgebra} is an effect algebra the corresponding poset of which is an orthoposet. It is well known that orthoalgebras generalize orthomodular posets with respect to effect algebras as follows. If $\mb A=(A,\oplus,',0,1)$ is an orthoalgebra, then for any $x,y\in A$ such that $x\leq y'$, $x\oplus y$ is a least upper bound of $x$ and $y$ in $A$ although it need not be their supremum. Naturally the question 
arises whether Theorem \ref{orthomodulardir->orthomodularpos} and Theorem \ref{orthomodularpos->orthomodulardir} can be generalized to orthoalgebras, with the aim of proving that any orthoalgebra $\mb A$ can be made into a total algebra by suitably assigning a bounded commutative directoid $\mathbb{D}(\mb A)$ with antitone involution  to $\mb A$. The next results solve the aforementioned problem in the positive.
\begin{definition}\label{def: odir}
An {\em ortho-directoid} is an algebra $(D,\sqcap,{}',0,1)$ of type $(2,1,0,0)$ satisfying the following identities:
\begin{enumerate}[{\rm(i)}]
\item $x\sqcap y\approx y\sqcap x$,
\item $x\sqcup x'\approx 1$,
\item $0\sqcup x\approx x$,
\item $((x\sqcap y')\sqcup y)\sqcup(((x\sqcap y')\sqcup y)\sqcup z)'\approx(x\sqcap y')\sqcup (y\sqcup(((x\sqcap y')\sqcup y)\sqcup z)')$,
\item $(x\sqcap y)\sqcup((x\sqcap y)\sqcup y')'\approx y$,
\item $(x\sqcap y')\sqcup(y\sqcup(((x\sqcap y')\sqcup y)\sqcup z)')'\approx(y\sqcup(((x\sqcap y')\sqcup y)\sqcup z)')'$
\end{enumerate}
where $x\sqcup y:=(x'\sqcap y')'$ for all $x,y\in D$.
\end{definition}

Note that, by the definition of $\sqcup$ in any ortho-directoid we have $x\sqcup y\approx y\sqcup x$. Let us denote by $\mc{OD}$ the variety of ortho-directoids.

In order to prove the next lemma, we will freely make use of axioms from Definition~\ref{def: odir}.

\begin{lemma}\label{lem: auxo-direct}
Let $\mathbf D=(D,\sqcap,{}',0,1)$ be an ortho-directoid. Then {\rm(a)} and {\rm(b)} hold:
\begin{enumerate}[{\rm(a)}]
\item $\mathbf D$ satisfies the following identities and quasi-identity:
\begin{enumerate}[{\rm(i)}]
\item $0'\approx1$,
\item $x''\approx x$,
\item $1'\approx0$,
\item $x\sqcap x'\approx x\sqcap 0\approx0$, $x\sqcap1\approx x$, $(x\sqcap y)\sqcup y\approx(x\sqcup y)\sqcap y\approx y$, $x\sqcup1\approx1$,
\item $(x\sqcap y)'\sqcap y'\approx y'$,
\item $(x\sqcap y)'\sqcap y=0\text{ implies }x\sqcap y=y$,
\item $x\sqcap x\approx x$,
\item $x\sqcap((x\sqcap y)\sqcap z)\approx((x\sqcap y)\sqcap z)$.
\end{enumerate}
\item $\mathbb P(\mathbf D)$ is an orthoposet.
\end{enumerate}
\end{lemma}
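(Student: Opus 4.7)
My plan is to establish the items of (a) in a carefully ordered cascade — boundaries, then involution, then De~Morgan, then the remaining identities and quasi-identity — after which part (b) follows by a routine verification that the induced order is an orthoposet.

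First I would settle (i)--(iii). From axiom (ii) of Definition~\ref{def: odir} at $x=0$ we get $0\sqcup 0'=1$, while axiom (iii) at $x=0'$ combined with the commutativity of $\sqcup$ inherited from (i) gives $0\sqcup 0'=0'$; hence $0'=1$, which is (i). The crux is then the involution $x''=x$. The intended route is to specialise axiom (v) of Definition~\ref{def: odir} at $y=x'$, obtaining $(x\sqcap x')\sqcup((x\sqcap x')\sqcup x'')'=x'$, and to combine this with the identity $x'\sqcap x''=0$ read off axiom (ii) via the definition $x\sqcup x'=(x'\sqcap x'')'=1$, using the $0$-absorption provided by (iii) to collapse the nested terms and force $x''=x$. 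With involution in hand, (iii) follows from $1'=0''=0$, and both De~Morgan laws become formal consequences of the defining equation $x\sqcup y=(x'\sqcap y')'$.

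Next, the identities in (iv) fall into place in sequence: $x\sqcap x'=(x'\sqcup x)'=1'=0$ by De~Morgan and involution; $x\sqcap 0=0$ and $x\sqcap 1=x$ are obtained by specialising axiom (v) at $y\in\{0,1\}$ using the boundary identities; and the absorption $(x\sqcap y)\sqcup y=y$ is extracted from axiom (v) of Definition~\ref{def: odir} by applying $'$ to both sides and using De~Morgan with involution, which yields $(x\sqcap y)'\sqcap((x\sqcap y)\sqcup y')=y'$ and rearranges to the desired absorption. The dual absorption $(x\sqcup y)\sqcap y=y$ is then immediate by De~Morgan, and $x\sqcup 1=(x'\sqcap 0)'=0'=1$. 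Identity (v) of (a) is simply the De~Morgan transform of absorption; idempotence (vii) is the $y=x$ instance of absorption; and (viii) follows from (vii) together with iterated absorption. For the quasi-identity (vi), assume $(x\sqcap y)'\sqcap y=0$: then $((x\sqcap y)\sqcup y')'=(x\sqcap y)'\sqcap y''=(x\sqcap y)'\sqcap y=0$, and substituting this into axiom (v) of Definition~\ref{def: odir} yields $(x\sqcap y)\sqcup 0=y$, whence $x\sqcap y=y$.

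For part (b), define $x\leq y$ iff $x\sqcap y=x$. Reflexivity is (vii), antisymmetry is commutativity (i), and transitivity uses a renamed instance of (viii): if $x\leq y\leq z$, then $(z\sqcap y)\sqcap x=y\sqcap x=x$ by substitution and commutativity, and $z\sqcap x=z\sqcap((z\sqcap y)\sqcap x)=(z\sqcap y)\sqcap x=x$ by (viii), so $x\leq z$. Boundedness comes from (iv), antitonicity of $'$ from identity (v) of (a), and the orthocomplementation conditions $x\wedge x'=0$, $x\vee x'=1$ follow from $x\sqcap x'=0$ and $x\sqcup x'=1$ together with the fact that $\sqcap$ and $\sqcup$ reduce to meet and join in the induced order whenever these exist. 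The main obstacle is the clean derivation of $x''=x$: the axioms (iv)--(vi) of Definition~\ref{def: odir} are evidently designed so that involution is forced, but disentangling the precise combinatorial bootstrap from (v) of Definition~\ref{def: odir} and the $0$-absorption of (iii) is the subtle technical step on which essentially everything that follows depends.
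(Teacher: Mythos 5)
Your overall architecture (boundary identities, then involution, then De Morgan, then the remaining identities, then the induced orthoposet) matches the paper's, and several items ((i), (iii), the absorption laws, (v) and (vi) of part (a)) are handled essentially as the paper handles them. But there are genuine gaps at exactly the two places where the real work lies. First, the involution. You propose to read $x'\sqcap x''=0$ off the axiom $x\sqcup x'=(x'\sqcap x'')'\approx 1$; but passing from $(x'\sqcap x'')'=1=0'$ to $x'\sqcap x''=0$ already requires injectivity of $'$, i.e.\ the involution you are trying to prove. Moreover, axiom (v) of Definition~\ref{def: odir} at $y=x'$ produces the term $(x\sqcap x')\sqcup((x\sqcap x')\sqcup x'')'$, whose leading subterm is $x\sqcap x'$, not $x'\sqcap x''$, and even if that subterm collapsed to $0$ the resulting identity would be $(x'')'\approx x'$, i.e.\ $x'''\approx x'$, which is strictly weaker than $x''\approx x$. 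The paper's derivation is arranged precisely to dodge both problems: axiom (v) (with the roles of $x$ and $y$ interchanged) is first unwound into pure $\sqcap/{}'$ form as $x\approx((x\sqcap y)'\sqcap((x\sqcap y)'\sqcap x'')''')'$, then instantiated at $x:=x'$, $y:=x''$ so that the \emph{already primed} subterm $(x'\sqcap x'')'$ appears and can be replaced by $1=0'$ directly from axiom (ii), with no injectivity needed; simplifying via $(0'\sqcap z')'\approx z$ (axiom (iii)) gives $x'''\approx x'$, and only a second instantiation ($x\mapsto x''$, $y\mapsto x$, using commutativity and $x''''\approx x''$) yields $x''\approx x$.

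Second, item (viii) (weak associativity) does not ``follow from (vii) together with iterated absorption.'' Absorption gives $x\sqcap y\le y$ and, via commutativity, $x\sqcap y\le x$, but to conclude $x\sqcap((x\sqcap y)\sqcap z)=(x\sqcap y)\sqcap z$ from $(x\sqcap y)\sqcap z\le x\sqcap y\le x$ you would need transitivity of the relation defined by $a\le b$ iff $a\sqcap b=a$ --- and transitivity is exactly what weak associativity is needed to secure, so the argument is circular. In the paper this is the bulk of the proof: auxiliary identities (15)--(17) are extracted from axioms (iv) and (vi) of Definition~\ref{def: odir} and combined through a long chain of substitutions. Tellingly, your proposal never uses axioms (iv) and (vi) for this purpose, although they are the axioms carrying the associativity content of an ortho-directoid. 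Until (viii) is actually established, the transitivity step in your part (b) is also unsupported.
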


\begin{proof}
\
\begin{enumerate}[(a)]
\item
\begin{enumerate}[(i)]
\item $0'\approx0\sqcup 0'\approx1$.
\item Observe that, by the definition of $\sqcup$, one has
\[
1\approx x\sqcup x'\approx(x'\sqcap x'')'
\]
and, since
\[
(x\sqcap y)\sqcup((x\sqcap y)\sqcup x')'\approx x,
\]
we have
\[
x\approx((x\sqcap y)'\sqcap((x\sqcap y)'\sqcap x'')''')'.
\]
Hence, we compute
\begin{align*}
x' & \approx((x'\sqcap x'')'\sqcap ((x'\sqcap x'')'\sqcap x''')''')'\approx((1\sqcap((1\sqcap x''')''')'\approx \\
   & \approx((0'\sqcap((0'\sqcap x''')''')'\approx0\sqcup(0'\sqcap x''')''\approx(0'\sqcap x''')''\approx(0\sqcup x'')'\approx x'''.
\end{align*}
So, we have
\[
x\approx((x\sqcap x'')'\sqcap((x\sqcap x'')'\sqcap x'')''')'\approx((x''\sqcap x)'\sqcap((x''\sqcap x)'\sqcap x'''')''')'\approx x''.
\]
\item Just note that $1'\approx0''\approx0$.
\item We have
\[
(x\sqcap x')'\approx(x''\sqcap x')'\approx x'\sqcup x\approx1.
\]
Therefore,
\[
x\sqcap x'\approx(x\sqcap x')''\approx1'\approx0.
\]
Furthermore, observe that
\[
x\approx x\sqcup0\approx(x'\sqcap0')'\approx(x'\sqcap1)'.
\]
By applying $'$ to both sides of the previous identity, one has that $x'\approx x'\sqcap1$ and so $x\approx x\sqcap1$, since $'$ is an involution. Now, note that
\[
(((x\sqcap y')\sqcup y)\sqcup((x\sqcap y')\sqcup y)')'\approx0.
\]
Therefore, one has 
\begin{align*}
(x\sqcap y')\sqcup y' & \approx(x\sqcap y')\sqcup(y\sqcup 0)'\approx \\
                      & \approx(x\sqcap y')\sqcup(y\sqcup(((x\sqcap y')\sqcup y)\sqcup((x\sqcap y')\sqcup y)')')'\approx \\
                      & \approx(y\sqcup(((x\sqcap y')\sqcup y)\sqcup((x\sqcap y')\sqcup y)')')'\approx y'.
\end{align*}
Hence, we conclude $(x\sqcap y)\sqcup y\approx y$. The dual form of the obtained equation can be derived by definition of $\sqcup$ and the properties of $'$. However, we have also
\[
x\sqcap0\approx(x\sqcap0)\sqcup0\approx0
\]
as well as $x\sqcup1\approx1$, by De Morgan's laws.
\item Note that
\[
((x\sqcap y)'\sqcap y')'\approx(x\sqcap y)\sqcup y\approx y.
\]
Therefore we obtain $(x\sqcap y)'\sqcap y'\approx y'$ since $'$ is an involution.
\item This easily follows upon noticing that
\[
y\approx(x\sqcap y)\sqcup((x\sqcap y)\sqcup y')'\approx(x\sqcap y)\sqcup((x\sqcap y)'\sqcap y).
\]
\item We have
\[
x\sqcap x\approx(x\sqcup(x\sqcap x))\sqcap x\approx x.
\]
\item Observe that 
\begin{align*}
(x\sqcap(x\sqcap(x\sqcap y)')')' & \approx x'\sqcup(x'\sqcup(x\sqcap y)'')'\approx \\
                                 & \approx((x\sqcap y)'\sqcap x')\sqcup(((x\sqcap y)'\sqcap x')\sqcup(x\sqcap y)'')'\approx(x\sqcap y)'.
\end{align*}
The above derivation proves that 
\begin{enumerate}
\item[(15)] $x\sqcap(x\sqcap(x\sqcap y)')'\approx x\sqcap y$.
\end{enumerate}
Furthermore, we have
\begin{enumerate}
\item[(16)] $(x\sqcap y)'\sqcap(y\sqcap(x\sqcap y)')'\approx y'$
\end{enumerate}
since
\[
y'\approx((x\sqcap y)\sqcup((x\sqcap y)\sqcup y')')'.
\]
Now, by Definition~\ref{def: odir}, one has
\[
((x\sqcap y')\sqcup(y\sqcup(((x\sqcap y')\sqcup y)\sqcup z)'))'\approx(((x\sqcap y')\sqcup y)\sqcup(((x\sqcap y')\sqcup y)\sqcup z)')'.
\]
Note that
\begin{align*}
(((x\sqcap y')\sqcup y)\sqcup(((x\sqcap y')\sqcup y)\sqcup z)')' & \approx((x\sqcap y')'\sqcap y')\sqcap(((x\sqcap y')\sqcup y)'\sqcup z)\approx \\
                                                                 & \approx((x\sqcap y')'\sqcap y')\sqcap(((x\sqcap y')'\sqcap y')\sqcap z')'
\end{align*}
as well as
\[
((x\sqcap y')\sqcup(y\sqcup(((x\sqcap y')\sqcup y)\sqcup z)'))'\approx(x\sqcap y')'\sqcap(y'\sqcap(((x\sqcap y')'\sqcap y')'\sqcap z')').
\]
Hence, we derive
\begin{enumerate}
\item[(17)] $((x\sqcap y')'\sqcap y')\sqcap(((x\sqcap y')'\sqcap y')\sqcap z')'\approx(x\sqcap y')'\sqcap(y'\sqcap(((x\sqcap y')'\sqcap y')'\sqcap z')')$.
\end{enumerate}
Now, let us set $y:=x\sqcap(y\sqcap x)'$ in (17). Then we obtain
\begin{align*}
& ((x\sqcap(x\sqcap(y\sqcap x)')')'\sqcap(x\sqcap(y\sqcap x)')')\sqcap \\
& \sqcap(((x\sqcap(x\sqcap(y\sqcap x)')')'\sqcap(x\sqcap(y\sqcap x)'))\sqcap z')'\approx \\
& \approx(x\sqcap(x\sqcap(y\sqcap x)')')\sqcap \\
& \sqcap((x\sqcap(y\sqcap x)')'\sqcap(((x\sqcap(x\sqcap(y\sqcap x)')')'\sqcap(x\sqcap(y\sqcap x)')')'\sqcap z')').
\end{align*}
By (15) one has
\begin{align*}
& (((y\sqcap x)'\sqcap(x\sqcap(y\sqcap x)')')\sqcap(((y\sqcap x)'\sqcap(x\sqcap(y\sqcap x)')')\sqcap z')'\approx \\
& \approx(y\sqcap x)'\sqcap((x\sqcap(y\sqcap x)')'\sqcap(((y\sqcap x)'\sqcap(x\sqcap(y\sqcap x)')')'\sqcap z')'.
\end{align*}
By (16), the last identity can be rewritten as
\[
x'\sqcap(x'\sqcap z')'\approx(y\sqcap x)'\sqcap((x\sqcap(y\sqcap x)')'\sqcap(x\sqcap z')').
\]
Hence, we have
\[
(x'\sqcap(x'\sqcap z')')'\approx((y\sqcap x)'\sqcap((x\sqcap(y\sqcap x)')'\sqcap(x\sqcap z')'))'.
\]
The last identity entails
\[
(x\sqcap y)\sqcap(x'\sqcap (x'\sqcap z')')'\approx(x\sqcap y)\sqcap((y\sqcap x)'\sqcap((x\sqcap(y\sqcap x)')'\sqcap(x\sqcap z')'))'.
\]
Therefore, by (v), one has
\[
(x\sqcap y)\sqcap(x'\sqcap(x'\sqcap z')')'\approx x\sqcap y.
\]
Now, replacing $z$ by $x'\sqcap z'$, we have
\[
(y\sqcap x)\sqcap(x'\sqcap(x'\sqcap(x'\sqcap z')')')'\approx y\sqcap x,
\]
namely
\[
(y\sqcap x)\sqcap(x'\sqcap z')'\approx x\sqcap y,
\]
by (15). Thus, by definition of $\sqcup$, we have
\[
(y\sqcap x)\sqcap(x\sqcup z)\approx x\sqcap y.
\]
Finally, setting $x:=x\sqcap z$, the last identity yields
\[
(y\sqcap(x\sqcap z))\sqcap((x\sqcap z)\sqcup z)\approx(x\sqcap z)\sqcap y,
\]
i.e.\
\[
(y\sqcap(x\sqcap z))\sqcap z\approx(x\sqcap z)\sqcap y,
\]
by (iv).
\end{enumerate}
\item In order to prove that $\mathbb P(\mathbf D)=(D,\leq,{}',0,1)$ is an orthoposet, just note that $x\sqcup0\approx x$ and $x\sqcup1\approx1$ entail that $0\leq x\leq1$, for any $x\in D$, respectively. Furthermore, since $\mathbf D$ is a directoid with an antitone involution (by (ii) and (v)), one has that $\mathbb P(\mathbf D)$ is a bounded poset with an antitone involution. Finally, if $x\leq x'$ then $x=0$, by (iv). The last observation entails that $L(x,x')=\{0\}$ as well as $U(x,x')=\{1\}$, for any $x\in D$.
\end{enumerate}
\end{proof}

By Lemma~\ref{lem: auxo-direct}, we conclude that any ortho-directoid is a bounded directoid with an antitone involution satisfying further conditions whose ratio will be clear soon. Moreover, $\mc{OD}\subseteq\mc{D}^{\mc P}_{c}$.

\begin{theorem}
Let $\mathbf D=(D,\sqcap,{}',0,1)$ be an ortho-directoid. Let us define a partial binary operation as follows, for any $x,y\in D$:
\begin{equation*}
x\oplus y:=
\begin{cases}
x\sqcup y         & \text{if }x\leq y', \\
\text{undefined,} & \text{otherwise}.
\end{cases}
\end{equation*}
Then $\mathcal O(\mathbf D):=(D,\oplus,{}',0,1)$ is an orthoalgebra.
\end{theorem}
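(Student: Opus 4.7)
The strategy is to verify axioms (E1)--(E4) of Definition~\ref{defea} for $\mathcal O(\mathbf D)$, and then to check that the induced poset is an orthoposet. Throughout I will use Lemma~\ref{lem: auxo-direct} freely: in particular $x''\approx x$, $x\sqcap x'\approx 0$, $x\sqcup x'\approx 1$, the absorption $(x\sqcup y)\sqcap y\approx y$, and the quasi-identity $(x\sqcap y)'\sqcap y=0\Rightarrow x\sqcap y=y$, together with the standard directoid observations that $a\leq b$ iff $a\sqcap b=a$ iff $a\sqcup b=b$, and that $a\leq b$ iff $b'\leq a'$.

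Axioms (E1), (E3), (E4) should go through routinely. For (E1) I would use commutativity of $\sqcup$ (inherited from that of $\sqcap$ via the definition $x\sqcup y:=(x'\sqcap y')'$) together with the symmetry $x\leq y'\Leftrightarrow y\leq x'$, which shows the domain of $\oplus$ is symmetric and forces $x\oplus y=y\oplus x$. For (E3), existence is immediate since $x\leq x=(x')'$ gives $x\oplus x'=x\sqcup x'=1$; for uniqueness, if $x\oplus y=1$ then $y\leq x'$ and $x'\sqcap y'=(x\sqcup y)'=0$, so applying Lemma~\ref{lem: auxo-direct}(a)(vi) with the roles $x\mapsto y$, $y\mapsto x'$ yields $y=y\sqcap x'=x'$. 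For (E4), $x\oplus 1$ defined forces $x\leq 1'=0$, whence $x=x\sqcap 0=0$.

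The real work is associativity (E2). Assume $x\oplus y$ and $(x\oplus y)\oplus z$ exist, that is, $x\leq y'$ (so $x\sqcap y'=x$) and $x\sqcup y\leq z'$ (so $(x\sqcup y)\sqcup z'=z'$). From $y\leq x\sqcup y\leq z'$ we deduce that $y\oplus z=y\sqcup z$ is defined. The plan is then to specialise identities (iv) and (vi) of Definition~\ref{def: odir} by substituting the universally quantified variable $z$ with the element $z'$. Using $(z')'=z$ together with $x\sqcap y'=x$ and $(x\sqcup y)\sqcup z'=z'$, axiom (iv) simplifies to
\[
(x\sqcup y)\sqcup z = x\sqcup(y\sqcup z),
\]
while axiom (vi) simplifies to
\[
x\sqcup(y\sqcup z)' = (y\sqcup z)',
\]
i.e., $x\leq(y\sqcup z)'$. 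Hence $x\oplus(y\oplus z)$ is defined and equals $(x\oplus y)\oplus z$. Pinpointing this particular substitution is the only real obstacle; once it is found the calculation becomes purely mechanical.

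Finally, I will check that $\mathbb P(\mathcal O(\mathbf D))$ is an orthoposet. First, if $a\leq_{\mathrm{eff}} b$ in $\mathcal O(\mathbf D)$, i.e., $a\oplus c=b$ for some $c$, then $b=a\sqcup c\geq a$ in $\mathbf D$, so the effect-algebra order refines the order of $\mathbf D$. By Lemma~\ref{lem: auxo-direct}(b), $\mathbb P(\mathbf D)$ is already an orthoposet, so $L(x,x')=\{0\}$ and $U(x,x')=\{1\}$ in $\mathbf D$; these conditions are then inherited by the refinement $\leq_{\mathrm{eff}}$. Therefore $\mathcal O(\mathbf D)$ is an effect algebra whose induced poset is an orthoposet, which by definition means it is an orthoalgebra.
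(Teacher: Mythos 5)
Your proof is correct and follows essentially the same route as the paper's: commutativity and the unit axioms are routine, and the crux of associativity is exactly the substitution of $z'$ for $z$ in axioms (iv) and (vi) of Definition~\ref{def: odir}, simplified via $x\sqcap y'=x$ and $(x\sqcup y)\sqcup z'=z'$. Your closing observation that the effect-algebra order is contained in the directoid order, so that $L(x,x')=\{0\}$ and $U(x,x')=\{1\}$ transfer to $\mathcal O(\mathbf D)$, is a small but welcome point of extra care that the paper leaves implicit.
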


\begin{proof}
Suppose that $x\oplus y$ is defined. Then $x\leq y'$ and $x\oplus y=x\sqcup y$. Since $'$ is an antitone involution on the poset induced by $\mathbf D$, $y\oplus x$ is defined and so
\[
x\oplus y=x\sqcup y=y\sqcup x=y\oplus x.
\]
Now, if $x\oplus y$ and $(x\oplus y)\oplus z$ are defined, then $x\leq y'$ and $x\sqcup y=x\oplus y\leq z'$. Hence, we have also $y\leq z'$ and so $y\oplus z=y\sqcup z$ is defined. By Definition~\ref{def: odir} (vi),
\[
x\sqcup (y\sqcup z)'=(x\sqcap y')\sqcup(y\sqcup(((x\sqcap y')\sqcup y)\sqcup z')')'=(y\sqcup(((x\sqcap y')\sqcup y)\sqcup z')')'=(y\sqcup z)'.
\]
Hence, we conclude that
\[
x\sqcup (y\sqcup z)=x\oplus(y\sqcup z)=x\oplus(y\oplus z)
\]
is defined and so, by Definition~\ref{def: odir} (iv), one has
\begin{align*}
(x\oplus y)\oplus z & =(x\sqcup y)\sqcup z=((x\sqcap y')\sqcup y)\sqcup(((x\sqcap y')\sqcup y)\sqcup z')'= \\
& =(x\sqcap y')\sqcup (y\sqcup(((x\sqcap y')\sqcup y)\sqcup z')')=x\sqcup (y\sqcup z)=x\oplus (y\oplus z).
\end{align*}
Furthermore, note that $x\leq x''$. Therefore, $x\oplus x'$ is defined and so $x\oplus x'=x\sqcup x'=1$, by Definition~\ref{def: odir} (ii). Now, if $x\leq y'$ and $(x'\sqcap y')'=x\sqcup y=1$, one has $x'\sqcap y'=1'=0$, by Lemma~\ref{lem: auxo-direct} (iii), upon noticing that $'$ is an involution. Hence,
\[
0=x'\sqcap y'=(x\sqcap y')'\sqcap y'
\]
and, by Lemma~\ref{lem: auxo-direct} (vi), $x\sqcap y'=y'$, i.e.\ $y'\leq x\leq y'$. We conclude that $y=x'$, since $'$ in an involution. The fact that, if $x\oplus1$ is defined, then $x=0$, trivially follows upon noticing that if $x\leq 1'=0$ (by Lemma~\ref{lem: auxo-direct} (iii)), then $x=0$ since $0$ is the least element in the poset induced by $\sqcap$, by Definition~\ref{def: odir} (iii). We conclude that $\mathcal O(\mathbf D):=(D,\oplus,{}',0,1)$ is an effect algebra. Moreover, $\mathcal O(\mathbf D)$ is also an orthoalgebra, since, by Lemma~\ref{lem: auxo-direct}, $\mathbb P(\mathbf D)=(D,\leq,{}',0,1)$ is an orthoposet.
%
\end{proof}

Also, conversely, to every orthoalgebra an ortho-directoid can be assigned, see the next result.

\begin{theorem} Let $\mathbf A=(A,\oplus,{}',0,1)$ be an orthoalgebra. Let us set, for any $x,y\in A:$
\begin{equation*}
y\sqcup x=x\sqcup y:=
\begin{cases}
x\oplus y   & \text{if }x\leq y', \\
\max\{x,y\} & \text{if }x\not\leq y'\text{ and }(x\leq y\text{ or }y\leq x),  \\
z\in U(x,y) & \text{if }x\not\leq y'\text{ and }x\parallel y.
\end{cases}
\end{equation*}
Furthermore, define $x\sqcap y:=(x'\sqcup y')'$ for all $x,y\in A$. Then $\mathcal D(\mathbf A):=(A,\sqcap,{}',0,1)$ is an ortho-directoid. Moreover, $\mathcal O(\mathcal D (\mathbf A))=\mathbf A$.
\end{theorem}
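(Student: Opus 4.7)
The plan is to verify the six axioms (i)--(vi) of Definition~\ref{def: odir} for $\mathcal D(\mathbf A)$ and then to establish $\mathcal O(\mathcal D(\mathbf A))=\mathbf A$ by identifying the two induced orders. Axiom (i) is immediate from the symmetric definition of $\sqcup$. For (ii) and (iii), since $x\leq x''=(x')'$ and $0\leq x'$ always, the first clause of the definition yields $x\sqcup x'=x\oplus x'=1$ by (E3) and $0\sqcup x=0\oplus x=x$ by (F3). For (v), set $a:=x\sqcap y=(x'\sqcup y')'$; from $x',y'\leq x'\sqcup y'$ we get $a\leq y$, hence $a\leq(y')'$ and so $a\sqcup y'=a\oplus y'$ by the first clause. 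Using the standard orthoalgebra identity $y\ominus a=(a\oplus y')'$ together with $a\leq y=(y\ominus a)'$, a further application of the first clause gives $a\sqcup(y\ominus a)=a\oplus(y\ominus a)=y$, which is precisely (v).

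The heart of the argument lies in axioms (iv) and (vi), which encode the associativity of $\oplus$ within the directoid language. Setting $u:=x\sqcap y'$ and $v:=y$, we have $u=(x'\sqcup y)'\leq y'=v'$, whence $w:=u\sqcup v=u\oplus v$; with $t:=(w\sqcup z)'$ the directoid inequality $w\leq w\sqcup z$ gives $t\leq w'\leq u',v'$. Hence all of $u\oplus v$, $w\oplus t$, $v\oplus t$, and $u\oplus(v\oplus t)$ are defined, and by (E1)--(E2) we have $u\oplus(v\oplus t)=(u\oplus v)\oplus t=w\oplus t$. The first clause collapses $v\sqcup t=v\oplus t$ and $w\sqcup t=w\oplus t$, so the two sides of (iv) read $w\oplus t$ and $u\sqcup(v\oplus t)=u\oplus(v\oplus t)$ (the latter since $u\leq(v\oplus t)'$), which coincide by orthoalgebraic associativity. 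For (vi) the same set-up gives $u\leq(v\oplus t)'$; since $L((v\oplus t),(v\oplus t)')=\{0\}$ in the induced orthoposet, we cannot have $u\leq v\oplus t$ unless $u=0$, so the second clause of the definition (or, when $u=0$, axiom (iii)) gives $u\sqcup(v\oplus t)'=\max\{u,(v\oplus t)'\}=(v\oplus t)'$, as required.

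For the final assertion $\mathcal O(\mathcal D(\mathbf A))=\mathbf A$, one first observes that the canonical orthoalgebra order on $\mathbf A$ coincides with the order induced by $\sqcap$ on $\mathcal D(\mathbf A)$: if $a\leq b$ in $\mathbf A$ with $a\neq 0$ then $a\not\leq b'$ (orthoposet property), and the second clause of the definition yields $a\sqcup b=\max\{a,b\}=b$; the converse and the case $a=0$ are immediate. Thus the pairs $(x,y)$ with $x\leq y'$ are the same in both structures, and on such pairs the partial operation $\oplus$ of $\mathcal O(\mathcal D(\mathbf A))$ equals $x\sqcup y=x\oplus y$ by the first clause. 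The chief technical difficulty is the bookkeeping required in (iv) and (vi) to make sure that every $\sqcup$-subterm appearing in these identities involves arguments which are orthogonal in $\mathbf A$, so that the non-deterministic third clause of the definition of $\sqcup$ never enters the computation and the value of each subterm is canonically an $\oplus$-sum.
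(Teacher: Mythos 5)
Your proof is correct and follows essentially the same route as the paper's: for each of axioms (i)--(vi) you check that every $\sqcup$-subterm has orthogonal (or, in (vi), comparable) arguments, so that the first (respectively second) clause of the definition applies and the identity reduces to an effect-algebra computation via (E1)--(E2), with the same case split ($x\sqcap y'=0$ versus the $\max$ clause) in (vi) that the paper uses; your treatment of $\mathcal O(\mathcal D(\mathbf A))=\mathbf A$ merely fills in details the paper declares straightforward. One small slip: in your argument for (v) the asserted equality $y=(y\ominus a)'$ is false --- what you actually need (and what holds trivially) is $a\leq (y\ominus a)'=a\oplus y'$, which is exactly the condition for the first clause to apply to $a\sqcup(y\ominus a)$.
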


\begin{proof}
Let us prove items (i) -- (vi) of Definition~\ref{def: odir}.
\begin{enumerate}[(i)]
\item Just note that $x\sqcap y\approx(x'\sqcup y')'\approx(y'\sqcup x')'\approx y\sqcap x$, by the definition of $\sqcup$.
\item Observe that, since $'$ is an involution, then $x\leq x''$. Therefore, $x\sqcup x'\approx x\oplus x'\approx1$.
\item $0\leq x'$ entails $0\sqcup x\approx0\oplus x\approx x$.
\item Note that $x\sqcap y'\leq y'$. Moreover,
\[
(x\sqcap y')\sqcup y\leq(((x\sqcap y')\sqcup y)\sqcup z)=(((x\sqcap y')\sqcup y)\sqcup z)''.
\]
Hence, we have
\begin{align*}
((x\sqcap y')\sqcup y)\sqcup(((x\sqcap y')\sqcup y)\sqcup z)' & \approx((x\sqcap y')\oplus y)\oplus(((x\sqcap y')\oplus y)\sqcup z)'\approx \\
                                                              & \approx(x\sqcap y')\oplus(y\oplus(((x\sqcap y')\oplus y)\sqcup z)').
\end{align*}
However,
\[
y\leq(((x\sqcap y')\oplus y)\sqcup z)
\]
and so
\[
y\oplus(((x\sqcap y')\oplus y)\sqcup z)'\approx y\sqcup(((x\sqcap y')\sqcup y)\sqcup z)'
\]
as well as
\[
(x\sqcap y')\leq(y\oplus(((x\sqcap y')\oplus y)\sqcup z)')'.
\]
Hence, we conclude
\[
(x\sqcap y')\oplus(y\oplus(((x\sqcap y')\oplus y)\sqcup z)')\approx(x\sqcap y')\sqcup(y\sqcup(((x\sqcap y')\sqcup y)\sqcup z)').
\]
\item Just note that $x\sqcap y\leq y=y''$ and
\[
x\sqcap y\leq((x\sqcap y)\sqcup y')''.
\]
We conclude that
\[
(x\sqcap y)\sqcup((x\sqcap y)\sqcup y')'\approx(x\sqcap y)\oplus((x\sqcap y)\oplus y')'\approx y.
\]
\item Since $x\sqcap y'\leq y'$ we have
\[
(x\sqcap y')\sqcup y\approx(x\sqcap y')\oplus y.
\]
Furthermore,
\[
y\leq(((x\sqcap y')\oplus y)\sqcup z)''=((x\sqcap y')\oplus y)\sqcup z.
\]
We have
\[
y\sqcup(((x\sqcap y')\sqcup y)\sqcup z)'\approx y\oplus(((x\sqcap y')\oplus y)\sqcup z)'.
\]
Moreover,
\[
((x\sqcap y')\oplus y)\leq(((x\sqcap y')\oplus y)\sqcup z)
\]
entails that
\[
((x\sqcap y')\oplus y)\oplus(((x\sqcap y')\oplus y)\sqcup z)'
\]
is defined as well as
\[
(x\sqcap y')\oplus(y\oplus(((x\sqcap y')\oplus y)\sqcup z)').
\]
Therefore,
\[
(x\sqcap y')\leq(y\oplus(((x\sqcap y')\oplus y)\sqcup z)')'.
\]
If
\[
(x\sqcap y')\leq y\oplus(((x\sqcap y')\oplus y)\sqcup z)'=(y\oplus(((x\sqcap y')\oplus y)\sqcup z)')'',
\]
then $x\sqcap y'=0$ and so the desired result follows trivially. Otherwise, we have
\begin{align*}
(x\sqcap y')\sqcup(y\sqcup(((x\sqcap y')\sqcup y)\sqcup z)')' & \approx\max\{x\sqcap y',(y\sqcup(((x\sqcap y')\sqcup y)\sqcup z)')'\}\approx \\
                                                              & \approx(y\sqcup(((x\sqcap y')\sqcup y)\sqcup z)')'.
\end{align*}
\end{enumerate}
The equality $\mathcal O(\mathcal D(\mathbf A))=\mathbf A$ is straightforward.
\end{proof}

The next corollary is immediate. It fully characterizes orthoposets induced by orthoalgebras in terms of their assigned directoids.

\begin{corollary}
Let $\mathbf P=(P,\leq,{}',0,1)$ be a bounded poset with an antitone involution. Then $\mathbf P$ is the bounded poset with antitone involution induced by an orthoalgebra if and only if there exists an assigned directoid $\mathbb D(\mathbf P)$ which is an ortho-directoid.
\end{corollary}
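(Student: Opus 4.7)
The statement is essentially an immediate consolidation of the two preceding theorems, so the plan is to spell out how each direction reduces to one of them.

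For the forward direction, I would start from the assumption that $\mathbf P=(P,\leq,{}',0,1)$ is the bounded involution poset induced by an orthoalgebra $\mathbf A=(P,\oplus,{}',0,1)$. Then the previous theorem (which constructs $\mathcal D(\mathbf A)$) yields an ortho-directoid $\mathcal D(\mathbf A)=(P,\sqcap,{}',0,1)$ on the same carrier, whose operations $\sqcap,{}'$ are defined via a choice function on upper cones together with the partial operation $\oplus$. What I need to check is that $\mathcal D(\mathbf A)$ is in fact an \emph{assigned} directoid to $\mathbf P$, i.e.\ that its induced order coincides with $\leq$. This is immediate from the definition of $\sqcup$ in that theorem: when $x\leq y$ we have $x\sqcup y=x\oplus(y\ominus x)=y$, and in the incomparable case $x\sqcup y\in U(x,y)$ is strictly above both, so the order recovered from $\sqcap$ agrees with the original order on $\mathbf A$. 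Hence $\mathbb D(\mathbf P):=\mathcal D(\mathbf A)$ is the desired ortho-directoid.

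For the converse, assume some assigned directoid $\mathbb D(\mathbf P)=(P,\sqcap,{}',0,1)$ is an ortho-directoid. Apply the other theorem of this subsection to obtain the orthoalgebra $\mathcal O(\mathbb D(\mathbf P))=(P,\oplus,{}',0,1)$. Its induced poset, by definition of $\oplus$ and property (F1) recalled in Section~\ref{sec:1}, coincides with the poset induced by $\sqcap$, which in turn coincides with $(P,\leq,{}',0,1)=\mathbf P$ because $\mathbb D(\mathbf P)$ was \emph{assigned} to $\mathbf P$. Therefore $\mathbf P$ is the bounded involution poset induced by the orthoalgebra $\mathcal O(\mathbb D(\mathbf P))$.

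There is really no hard step here; the only thing to be slightly careful about is the bookkeeping verifying that the carrier and the antitone involution $'$ are the same throughout all three objects $\mathbf P$, $\mathbb D(\mathbf P)$ and $\mathcal O(\mathbb D(\mathbf P))$ (or $\mathbf A$, $\mathcal D(\mathbf A)$ and $\mathbf P$ in the other direction), and that the partial order recovered at each step agrees with the given one. Both these checks are read off from the constructions of $\mathcal D(\cdot)$ and $\mathcal O(\cdot)$ in the preceding two theorems, together with the identity $\mathcal O(\mathcal D(\mathbf A))=\mathbf A$ established there. Thus the corollary follows by a two-line argument citing those theorems.
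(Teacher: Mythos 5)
Your argument is correct and matches the paper's route exactly: the paper states this corollary without proof as ``immediate'' from the two preceding theorems, and your two-direction reduction via $\mathcal D(\cdot)$ and $\mathcal O(\cdot)$, plus the bookkeeping that the carrier, the involution and the induced order are preserved at each step, is precisely what that abbreviates. One cosmetic slip: in the forward direction $x\sqcup y$ is defined as $x\oplus y$ when $x\leq y'$ (and as $\max\{x,y\}$ when the two are comparable but not orthogonal), not as $x\oplus(y\ominus x)$; the conclusion that the induced order of $\mathcal D(\mathbf A)$ coincides with $\leq$ nevertheless holds.
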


Moreover, since an orthoalgebra $\mb A$ is an orthomodular poset if and only if $x\oplus y=x\lor y$, for any pair of orthogonal elements $x,y\in A$ (see e.g.\ \cite{FoBe}), by Lemma \ref{lem3} one has also the following

\begin{corollary} Let $\mb A=(A,\oplus,',0,1)$ be an orthoalgebra. Then $\mb A$ is an orthomodular poset if and only if $\mc D(\mb A)$ satisfies $2.$ of Definition \ref{def: orthomodulardirectoid}.
\end{corollary}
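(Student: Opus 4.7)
The corollary follows essentially at once from Lemma~\ref{lem3} together with the classical characterization of orthomodular posets among orthoalgebras that is recalled immediately above its statement; my plan is simply to spell out the two-step reduction and to indicate the small verification needed along the way.

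First, I would translate the OMP condition on $\mathbf A$ into an order-theoretic condition on $\mathbb P(\mathbf A)$. It is standard that $\mathbf A$ is an orthomodular poset if and only if $x\oplus y=x\vee y$ for every orthogonal pair $x,y\in A$ (i.e.\ every pair with $x\leq y'$). Since $x\oplus y$ is always a minimal upper bound of $x$ and $y$ whenever it is defined, the equality $x\oplus y=x\vee y$ is equivalent to the mere existence of the join $x\vee y$ in $\mathbb P(\mathbf A)$. Hence $\mathbf A$ is an OMP exactly when $\mathbb P(\mathbf A)$ satisfies condition~(10) of Definition~\ref{def1}.

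Second, I would invoke Lemma~\ref{lem3}. By the preceding theorem, $\mathcal D(\mathbf A)$ is a bounded commutative directoid with antitone involution whose induced poset is $\mathbb P(\mathbf A)$. Lemma~\ref{lem3} then gives $\mathbb P(\mathbf A)\models(10)$ if and only if $\mathcal D(\mathbf A)$ satisfies identity~(14), namely $z\leq((x\sqcup z)\sqcup(y\sqcup z)')\sqcap(y\sqcup z)$; rewriting $z\leq w$ as $w\sqcap z=z$, this is exactly item~$2$ of Definition~\ref{def: orthomodulardirectoid}. Chaining the two equivalences gives the corollary.

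The only point deserving a brief check -- not a real obstacle, since Lemma~\ref{lem3} is already proved -- is that its argument applies to the particular directoid $\mathcal D(\mathbf A)$. The critical place is the identification $d\sqcap e=d\wedge e$ when $d'\leq e$: by De~Morgan this reduces to $d'\sqcup e'=d'\vee e'$ for the orthogonal pair $d',e'$, and the defining recipe of $\sqcup$ on $\mathcal D(\mathbf A)$ turns this into the equality $d'\oplus e'=d'\vee e'$. In the forward direction of the corollary this is granted by the OMP hypothesis on $\mathbf A$; in the converse direction one first obtains condition~(10) from Lemma~\ref{lem3} and then notes that, since $x\oplus y$ is minimal among the upper bounds of orthogonal $x,y$, one must have $x\oplus y=x\vee y$ whenever $x\vee y$ exists. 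This completes the plan.
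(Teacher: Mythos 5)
Your argument is correct and is essentially the paper's own: the corollary is stated there without a separate proof, being derived exactly as you do from the classical fact that an orthoalgebra is an orthomodular poset iff $x\oplus y=x\vee y$ for orthogonal $x,y$ (equivalently, condition (10) holds) together with Lemma~\ref{lem3}. Your additional check that Lemma~\ref{lem3} really applies to $\mathcal D(\mathbf A)$ --- via the maximality/minimality argument identifying $d\sqcap e$ with $d\wedge e$ when $d'\leq e$ --- is a worthwhile detail the paper leaves implicit.
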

In other words, we have a full equational characterization of the class of orthomodular posets within the class of orthoalgebras by means of their assigned varieties of bounded directoids with antitone involution which, in turn, are subvarieties of $\mc D^{\mc P}_{c}$. The results of this section, as far as we know, establish novel connections between orthomodular posets (lattices), orthoalgebras and paraorthomodular posets.
\section{Dedekind-MacNeille completions}\label{sec:4}

In this section we investigate bounded posets with an antitone involution whose Dedekind-MacNeille completion are paraorthomodular lattices.

It is well-known that every poset $(P,\leq)$ can be embedded into a complete lattice $\mathbf L$. We frequently take the so-called Dedekind-MacNeille completion $\BDM(P,\leq)$ for this $\mathbf L$.

Hence, let $\mathbf P=(P,\leq)$ be a poset. Put $\DM(\mathbf P):=\{L(A)\mid A\subseteq P\}$. Then $\BDM(\mathbf P):=(\DM(\mathbf P),\subseteq)$ is a complete lattice and $x\mapsto L(x)$ an embedding from $\mathbf P$ to $\BDM(\mathbf P)$ preserving all existing joins and meets, and an order isomorphism between the posets $\mathbf P$ and $(\{L(x)\mid x\in P\},\subseteq)$. We usually identify $P$ with $\{L(x)\mid x\in P\}$.

It is easy to see that if $B,C\subseteq P$ such that $B\leq C$ then $\bigvee_{\BDM(\mathbf P)}B=\bigwedge_{\BDM(\mathbf P)}C$ if and only if $L(C)\leq U(B)$.

By Banaschewski (\cite{bana}) and Schmidt (\cite{schmidt}) the Dedekind-MacNeille completion of a poset $\mathbf P$ is (up to isomorphism) any complete lattice $\mathbf L$ into which $\mathbf P$ can be supremum-densely and infimum-densely embedded (i.e.\, for every element $x\in L$ there exist $M,Q\subseteq P$ such that $x=\bigvee\varphi(M)=\bigwedge\varphi(Q)$, where $\varphi\colon{}P\to L$ is the embedding).

In what follows, if $\mathbf P=(P,\leq,{}',0,1)$ is a bounded poset with an antitone involution $'$ and $X,Y \subseteq P$, we define:
\begin{itemize}
\item $X':=\{x'\in P\mid x\in X\}$,
\item $X^\bot:=\{a\in P\mid a\leq x'\text{ for all }x\in X\}=L(X')$,
\item $X\vee_{\bot\bot}Y:=(X\cup Y)^{\bot\bot}$.
\end{itemize}


A well-known result by Rie\v canov\'a (\cite{riecanova}) characterizes those effect algebras admitting an effect algebraic Dedekind-MacNeille completion in terms of strong $D$-continuity. Since we intend to characterize those involutive posets admitting a paraorthomodular Dedekind-MacNeille completion we will need a slightly weaker notion of weak $D$-continuity.

\begin{definition}\label{xDefinition2.1}
A bounded poset $(P,\leq,{}',0,1)$ with an antitone involution $'$ is called {\em weakly $D$-continuous} if and only if for all $B,C\subseteq P$ with $B\le C$ the following condition is satisfied:
\begin{list}{\labelitemi}{\leftmargin=0.5em}
\item[{\rm(WDC)}] If $\bigwedge\nolimits_{\mathbf P}(C\cup B')=0$ then $L(C)\leq U(B)$.
\end{list}
\end{definition}

In the following, we establish a characterization of involutive bounded posets with para\-ortho\-mod\-u\-lar Dedekind-MacNeille completion.

\begin{theorem}\label{xTheorem4.4}
Let $\mathbf P=(P,\leq,{}',0,1)$ be a bounded poset with an antitone involution $'$. Then $\BDM(\mathbf P)$ is paraorthomodular if and only if $\mathbf P$ is weakly $D$-continuous.
\end{theorem}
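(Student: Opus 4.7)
The first thing I would do is fix how the involution $'$ on $\mathbf P$ extends to $\mathbf{DM}(\mathbf P)$. For $X\in\DM(\mathbf P)$ set $X^{\bot}:=L(X')=\{a\in P\mid a\leq x' \text{ for all } x\in X\}$, and observe the standard identities
\[
X^{\bot}\in\DM(\mathbf P),\qquad X^{\bot\bot}=LU(X)=X,\qquad L(x)^{\bot}=L(x'),
\]
so $\bot$ is an antitone involution on $\mathbf{DM}(\mathbf P)$ extending $'$. The meet in $\mathbf{DM}(\mathbf P)$ is intersection, the bottom element is $\{0\}$, and every LU-closed set $Y$ equals $L(U(Y))$. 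Hence the paraorthomodular condition on $\mathbf{DM}(\mathbf P)$ reads: for $X,Y\in\DM(\mathbf P)$,
\[
X\subseteq Y\ \text{and}\ X^{\bot}\cap Y=\{0\}\ \Longrightarrow\ X=Y.
\]

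For the ``if'' direction, I would assume \textup{(WDC)} and pick $X\subseteq Y$ in $\DM(\mathbf P)$ with $X^{\bot}\cap Y=\{0\}$. Setting $B:=X\subseteq P$ and $C:=U(Y)\subseteq P$ makes $B\leq C$ (every $b\in X\subseteq Y$ sits below every $c\in U(Y)$). A short computation gives
\[
L(C\cup B')=L(U(Y))\cap L(X')=Y\cap X^{\bot}=\{0\},
\]
so $\bigwedge_{\mathbf P}(C\cup B')=0$. Applying \textup{(WDC)} yields $L(C)\leq U(B)$, i.e.\ $L(U(Y))\subseteq LU(X)$, i.e.\ $Y\subseteq X$, and so $X=Y$.

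For the ``only if'' direction, suppose $\mathbf{DM}(\mathbf P)$ is paraorthomodular, take $B,C\subseteq P$ with $B\leq C$ and $\bigwedge_{\mathbf P}(C\cup B')=0$, and set $X:=LU(B)=B^{\bot\bot}$ and $Y:=L(C)$; both lie in $\DM(\mathbf P)$. From $B\leq C$ one gets $C\subseteq U(B)$, hence $X=LU(B)\subseteq L(C)=Y$. From $\bigwedge_{\mathbf P}(C\cup B')=0$ one obtains $L(C\cup B')=\{0\}$ (as the only element $\leq 0$ in $P$ is $0$), and therefore
\[
X^{\bot}\cap Y=B^{\bot}\cap L(C)=L(B')\cap L(C)=L(B'\cup C)=\{0\}.
\]
Paraorthomodularity forces $X=Y$, that is $LU(B)=L(C)$, which is equivalent to $L(C)\leq U(B)$; hence \textup{(WDC)} holds.

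The main obstacle is purely bookkeeping: one must carefully translate between the two lattices, remembering that ``the meet in $\mathbf P$ equals $0$'' is the \emph{stronger} statement that the entire set $L(C\cup B')$ collapses to $\{0\}$ in $\mathbf{DM}(\mathbf P)$, and that $L(C)\leq U(B)$ is exactly $L(C)\subseteq LU(B)$. Once these dictionary entries are in place, the rest is direct verification.
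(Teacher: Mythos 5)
Your proof is correct and follows essentially the same route as the paper's: both directions use the same substitutions ($B:=X$, $C:=U(Y)$ for deducing (WDC) from paraorthomodularity of the completion, and $X:=\bigvee_{\BDM(\mathbf P)}B=LU(B)$, $Y:=\bigwedge_{\BDM(\mathbf P)}C=L(C)$ for the converse), with your version merely spelling out the computations ($X^{\bot}=L(X')$, $L(C\cup B')=Y\cap X^{\bot}$, and the equivalence of $\bigvee B=\bigwedge C$ with $L(C)\leq U(B)$) that the paper leaves implicit. There are no gaps.
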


\begin{proof}
First assume $\BDM(\mathbf P)$ to be paraorthomodular, $B,C\subseteq P$, $B\le C$ and $\bigwedge\nolimits_{\mathbf P}(C\cup B')=0$. Let $X:=\bigvee_{\BDM(\mathbf P)}B$ and $Y:=\bigwedge_{\BDM(\mathbf P)}C$. Then $X\subseteq Y$ and $X^\bot\wedge Y=0$. Since $\BDM(\mathbf P)$ is paraorthomodular we obtain $X=Y$. We conclude that $L(C)\leq U(B)$, i.e.\, $\mathbf P$ is weakly $D$-continuous. Conversely, assume $\mathbf P$ to be weakly continuous. Since $\BDM(\mathbf P)$ is a complete lattice with antitone involution $^\bot$ it is enough to check the following condition:
\begin{center}
If $X,Y\in\DM(\mathbf P)$, $X\subseteq Y$ and $X^\bot\wedge Y=0$ then $X=Y$.
\end{center}
We put $B:=X$ and $C:=U(Y)$. Then $B\leq C$ and $\bigwedge\nolimits_{\mathbf P}(C\cup B')=0$. We conclude from (WDC) that $X=\bigvee_{\BDM(\mathbf P)}B=\bigwedge_{\BDM(\mathbf P)}C=Y$. Hence $\BDM(\mathbf P)$ is paraorthomodular.
\end{proof}

\begin{corollary}\label{corstr}
Every weakly $D$-continuous involutive bounded poset is para\-ortho\-mod\-u\-lar. 
\end{corollary}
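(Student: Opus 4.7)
The plan is to deduce the corollary directly from Theorem~\ref{xTheorem4.4} by transferring the paraorthomodular condition along the canonical embedding $\varphi\colon\mathbf P\to\BDM(\mathbf P)$, $x\mapsto L(x)$. Since this embedding is an order-isomorphism onto its image and converts the involution ${}'$ of $\mathbf P$ into the involution ${}^\bot$ of $\BDM(\mathbf P)$ (in the sense that $L(x)^\bot=L(x')$, as $(L(x))'=U(x')$ and hence $L(U(x'))=L(x')$), the hypothesis (P*) in $\mathbf P$ translates into the lattice version (P) inside the complete lattice $\BDM(\mathbf P)$.

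Concretely, I would begin by assuming $\mathbf P=(P,\leq,{}',0,1)$ is weakly $D$-continuous, and invoke Theorem~\ref{xTheorem4.4} to conclude that $\BDM(\mathbf P)$ is a paraorthomodular lattice. Next, fix $x,y\in P$ with $x\leq y$ and $L_{\mathbf P}(x',y)=\{0\}$; the goal is $x=y$. Via the embedding $\varphi$, the first hypothesis becomes $L(x)\subseteq L(y)$ in $\BDM(\mathbf P)$. For the second, I would compute the meet in $\BDM(\mathbf P)$:
\[
L(x)^\bot\wedge L(y)=L(x')\cap L(y)=L(x',y)=\{0\}=0_{\BDM(\mathbf P)},
\]
using that the meet in $\DM(\mathbf P)$ is set-theoretic intersection and that $L(x)^\bot=L(x')$.

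Applying condition (P) in the paraorthomodular lattice $\BDM(\mathbf P)$ yields $L(x)=L(y)$, and since $\varphi$ is injective this gives $x=y$; hence (P*) holds in $\mathbf P$, as desired. The proof is essentially a one-line deduction once Theorem~\ref{xTheorem4.4} is granted: the main thing to verify carefully is simply that the embedding $\varphi$ interacts correctly with the involution and with lower cones, so that the hypothesis $L(x',y)=\{0\}$ of (P*) corresponds exactly to the hypothesis $L(x)^\bot\wedge L(y)=0$ of (P) in the completion. There is no real obstacle here; the only subtle point is making the identification $L(x)^\bot=L(x')$ explicit, which is a routine consequence of the definitions of $({-})'$ and $({-})^\bot$ on subsets.
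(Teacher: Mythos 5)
Your argument is correct and is essentially the derivation the paper intends: the corollary is stated immediately after Theorem~\ref{xTheorem4.4} precisely because weak $D$-continuity gives a paraorthomodular $\BDM(\mathbf P)$, and the identities $L(x)^\bot=LU(x')=L(x')$ and $L(x)\wedge L(y)=L(x)\cap L(y)=L(x,y)$ let condition (P) in the completion pull back to (P*) in $\mathbf P$, exactly as you verify. (For what it is worth, one can also bypass the completion entirely by applying (WDC) to the singletons $B=\{x\}$, $C=\{y\}$, since $L(y)\leq U(x)$ forces $y\leq x$.)
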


The next definition and theorem are suggested by a similar result of Niederle for Boolean posets (\cite[Theorem~17]{Niederle}).

\begin{definition}\label{doubly}
Let $\mathbf P=(P,\leq,{}',0,1)$ be a bounded poset with an antitone involution $'$. A subset $X$ of $P$ is called {\em involution-closed and doubly dense in $\mathbf P$} if the following conditions are satisfied:
\begin{enumerate}[{\rm(i)}]
\item $a=\bigvee_{\mathbf P}(L(a)\cap X)=\bigwedge_{\mathbf P}(U(a)\cap X)$ for all $a\in P$,
\item $X'\subseteq X$,
\item $0,1\in X$.
\end{enumerate}
\end{definition}

\begin{remark}\label{ddcd}
Recall that any involution-closed and doubly dense subset $X$ in $\mathbf P$ is a bounded poset with induced order and involution. Moreover, if $\mathbf P=(P,\leq,{}',0,1)$ is a bounded poset with an antitone involution $'$ then $P$ is an involution-closed and doubly dense subset in its Dedekind-MacNeille completion $\BDM(\mathbf P)$. This can be shown by the same arguments as in {\rm(\cite[Theorem 16]{Niederle})}, so we omit it.
\end{remark}

\begin{theorem}\label{Embnied}
{\em Embedding  theorem for weakly $D$-continuous paraorthomodular posets.} \\
Let $\mathbf P=(P,\leq,{}',0,1)$ be a bounded poset with an antitone involution $'$. Then $\mathbf P$ is a weakly $D$-continuous paraorthomodular poset if and only if there exists a complete paraorthomodular lattice $\mathbf L=(L,\vee,\wedge,{}',0,1)$ such that $P$ is an involution-closed and doubly dense subset of $L$.
\end{theorem}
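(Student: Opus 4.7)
The approach is to take $\BDM(\mathbf P)$ as the canonical witness in the forward direction, and to exploit double density to pull conclusions back from $\mathbf L$ to $\mathbf P$ in the backward direction.

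For $(\Rightarrow)$, if $\mathbf P$ is weakly $D$-continuous (and hence, by Corollary~\ref{corstr}, paraorthomodular), I set $\mathbf L:=\BDM(\mathbf P)$. This $\mathbf L$ is a complete lattice by construction; it is paraorthomodular by Theorem~\ref{xTheorem4.4}; and by Remark~\ref{ddcd} the subset $P$, identified with $\{L(x):x\in P\}$, is involution-closed and doubly dense in $\mathbf L$.

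For $(\Leftarrow)$, suppose such an $\mathbf L$ exists. I verify weak $D$-continuity of $\mathbf P$ directly; paraorthomodularity of $\mathbf P$ then follows via Corollary~\ref{corstr} (alternatively, by the same density technique applied to $a'\wedge_{\mathbf L}b$). Let $B,C\subseteq P$ with $B\leq C$ and $\bigwedge\nolimits_{\mathbf P}(C\cup B')=0$. Using completeness of $\mathbf L$, set $X:=\bigvee_{\mathbf L}B$ and $Y:=\bigwedge_{\mathbf L}C$; then $X\leq Y$ in $\mathbf L$. The key step is to show $X'\wedge_{\mathbf L}Y=0$. By De~Morgan in $\mathbf L$, $X'=\bigwedge_{\mathbf L}B'$, so any $x\in P$ with $x\leq_{\mathbf L}X'\wedge_{\mathbf L}Y$ is a common lower bound in $\mathbf P$ of $C\cup B'$, whence $x=0$ by hypothesis. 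Double density of $P$ in $\mathbf L$ therefore forces
\[
X'\wedge_{\mathbf L}Y=\bigvee\nolimits_{\mathbf L}\bigl(L_{\mathbf L}(X'\wedge_{\mathbf L}Y)\cap P\bigr)=0.
\]
Paraorthomodularity of $\mathbf L$ now gives $X=Y$. Finally, for any $c^{*}\in L_{\mathbf P}(C)$ and $b^{*}\in U_{\mathbf P}(B)$, one has $c^{*}\leq_{\mathbf L}Y=X\leq_{\mathbf L}b^{*}$, i.e.\ $c^{*}\leq b^{*}$ in $\mathbf P$, establishing $L_{\mathbf P}(C)\leq U_{\mathbf P}(B)$.

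There is no substantial obstacle beyond executing the transfer step correctly: a set-theoretic meet-zero hypothesis in $\mathbf P$ is translated, via double density, into an honest lattice identity $X'\wedge_{\mathbf L}Y=0$ in $\mathbf L$, after which paraorthomodularity of $\mathbf L$ closes the argument. This is the same pattern as in the proof of Theorem~\ref{xTheorem4.4}, except that it must be made to work for a generic complete paraorthomodular envelope $\mathbf L$ rather than specifically for $\BDM(\mathbf P)$; it is precisely the double density hypothesis that allows this generalization.
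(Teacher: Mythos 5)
Your proposal is correct and follows essentially the same route as the paper: the forward direction invokes Theorem~\ref{xTheorem4.4} together with Remark~\ref{ddcd} to exhibit $\BDM(\mathbf P)$ as the witness, and the backward direction verifies (WDC) by forming $X=\bigvee_{\mathbf L}B$, $Y=\bigwedge_{\mathbf L}C$, using double density to get $X'\wedge_{\mathbf L}Y=0$, and then applying paraorthomodularity of $\mathbf L$ and Corollary~\ref{corstr}. You merely spell out two steps the paper leaves implicit (the density argument for $X'\wedge Y=0$ and the passage from $X=Y$ to $L(C)\leq U(B)$), which is harmless.
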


\begin{proof}
From Theorem~\ref{xTheorem4.4} we know that every weakly $D$-continuous paraorthomodular poset $\mathbf P$ is an involution-closed and doubly dense subset in $\BDM(\mathbf P)$. Conversely, let $\mathbf P$ be an involution-closed and doubly dense subset of a complete paraorthomodular lattice $\mathbf L=(L,\wedge,\vee,{}',0,1)$. Then $\mathbf P$ is an involutive bounded poset. Assume $B,C\subseteq P$, $B\le C$ and $\bigwedge\nolimits_{\mathbf P}(C\cup B')=0$. Put $X:=\bigvee_{\mathbf L}B$ and $Y:=\bigwedge_{\mathbf L}C$. Then $X\leq Y$ and $X'\wedge Y=0$ (since $u\in P$, $u\leq X'\wedge Y$ implies $u\in L(C\cup B')$, i.e.\, $u=0$). Since $\mathbf L$ is paraorthomodular we obtain $X=Y$, i.e.\, $\mathbf P$ is weakly $D$-continuous and from Corollary~\ref{corstr} we have that it is also paraorthomodular.
\end{proof}

In \cite{Finch} Finch proved that an orthomodular poset admits an orthomodular DM-completion if and only if it satisfies the so-called $B$-property. Fazio, Ledda and Paoli gave in \cite[Proposition 2]{fazio} a new characterization of the $B$-property for orthoposets as follows.

\begin{proposition}
{\rm\cite[Proposition 2]{fazio}} An orthoposet $\mathbf P=(P,\leq,{}',0,1)$ has the $B$-property if and only if it satisfies for all $X,Y\subseteq P$

\begin{list}{\labelitemi}{\leftmargin=0.5em}
\item[{\rm(FLP)}] $L(X)\subsetneqq L(Y)$ implies $L(Y)\cap LU(X')\neq\{0\}$,
\end{list}
\end{proposition}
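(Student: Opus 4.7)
The plan is to work inside the Dedekind--MacNeille completion $\BDM(\mathbf P)$, identifying the elements of $\DM(\mathbf P)$ with subsets of the form $L(X)$ for $X\subseteq P$. The guiding observation is that, by its very origin, Finch's $B$-property amounts to $\BDM(\mathbf P)$ being an orthomodular lattice under the orthocomplement $L(X)\mapsto L(X)^{\bot}$, and on such principal-from-below elements one has $L(X)^{\bot}=LU(X')$. Indeed, $LU(X')\subseteq L(X)^{\bot}$ is immediate from the antitonicity of $'$, while the reverse inclusion follows because $c\in U(X')$ implies $c'\in L(X)$, so that if $b\leq a'$ for every $a\in L(X)$ then, choosing $a=c'$, we get $b\leq c$. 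With this identification in place, (FLP) is exactly the orthomodular law on principal elements, read back in the language of cones on $P$.

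For the forward direction, I assume the $B$-property, so that $\BDM(\mathbf P)$ is orthomodular, and suppose $L(X)\subsetneq L(Y)$. Orthomodularity applied to this strict inequality yields a nonzero $Z\in\BDM(\mathbf P)$ with $Z\leq L(Y)\wedge L(X)^{\bot}=L(Y)\cap LU(X')$. By the supremum-density of $P$ in $\BDM(\mathbf P)$ recorded in Remark~\ref{ddcd}, I can extract some $p\in P$ with $0<L(p)\subseteq Z$; this $p$ lies in $L(Y)\cap LU(X')$ and is the witness demanded by (FLP).

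For the converse, I assume (FLP) and let $A\subsetneq B$ in $\DM(\mathbf P)$. Writing $A=L(X)$ and $B=L(Y)$, the hypothesis $L(X)\subsetneq L(Y)$ activates (FLP) and supplies a nonzero $p\in L(Y)\cap LU(X')$. Via the identification $A^{\bot}=LU(X')$, the principal element $L(p)$ is a nonzero member of $A^{\bot}\wedge B$, so that the orthomodular implication $A\subsetneq B\Rightarrow A^{\bot}\wedge B\neq\{0\}$ is verified. Hence $\BDM(\mathbf P)$ is orthomodular, i.e.\ $\mathbf P$ has the $B$-property.

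The step I expect to be the main obstacle is the clean identification $L(X)^{\bot}=LU(X')$ and the verification that $A\mapsto A^{\bot}$ genuinely orthocomplements $\BDM(\mathbf P)$, so that the $B$-property is unambiguously equivalent to the orthomodularity of the completion under this operation; this is precisely where the fact that $\mathbf P$ is an \emph{orthoposet}, and not merely a bounded poset with antitone involution, is indispensable. Once this correspondence is secured, the equivalence between (FLP) and orthomodularity of $\BDM(\mathbf P)$ reduces to the unpacking of cones carried out above, together with a single appeal to supremum-density.
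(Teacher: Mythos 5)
The paper does not actually prove this proposition: it is quoted verbatim from \cite[Proposition 2]{fazio}, so there is no internal proof to compare yours against. The closest internal analogue is Theorem~\ref{parMNC}, whose proof is precisely the computation you carry out: the identification $L(X)^{\bot}=LU(X')$, the fact that meets in $\BDM(\mathbf P)$ are set intersections, and the observation that (FLP) is the contrapositive of condition (P) read on the elements $L(X),L(Y)$ of $\DM(\mathbf P)$. That part of your argument is correct; your extraction of $p$ via supremum-density is even superfluous, since $L(Y)\wedge L(X)^{\bot}$ is literally the set $L(Y)\cap LU(X')$, so its being a nonzero element of the lattice already exhibits a nonzero element of $P$ in that set. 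Your closing remark, that orthoposet-hood is what makes $A\mapsto A^{\bot}$ an orthocomplementation on $\BDM(\mathbf P)$ and hence makes ``$A\subsetneqq B\Rightarrow A^{\bot}\wedge B\neq\{0\}$'' equivalent to orthomodularity, is also exactly right.

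The genuine gap is at the other end: you never engage with the definition of the $B$-property. You replace it, ``by its very origin'', with the assertion that $\BDM(\mathbf P)$ is orthomodular, invoking Finch. But the $B$-property is an intrinsic condition on $\mathbf P$ (a relative of condition (WDC) in Definition~\ref{xDefinition2.1}), not a statement about the completion by definition; the equivalence with orthomodularity of the completion is Finch's \emph{theorem}, and as recorded here (\cite{Finch}) it is stated for orthomodular posets, whereas the proposition concerns arbitrary orthoposets. Establishing that equivalence at the orthoposet level is essentially the content of \cite{fazio}, i.e.\ of the very result you are asked to prove. So what you have actually proved, correctly, is ``(FLP) $\iff$ $\BDM(\mathbf P)$ is orthomodular''; this becomes a proof of the proposition only once ``$B$-property $\iff$ $\BDM(\mathbf P)$ is orthomodular'' is secured for orthoposets, either by an argument parallel to Theorem~\ref{xTheorem4.4} or by working directly from Finch's definition of the $B$-property. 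As it stands, the step you yourself flag as ``the main obstacle'' is precisely the one left unproved.
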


Using this concept we can prove

\begin{theorem}\label{parMNC}
Let $\mathbf P=(P,\leq,{}',0,1)$ be a bounded poset with an antitone involution $'$. Then $\BDM(\mathbf P)$ is paraorthomodular if and only if $\mathbf P$ satisfies {\rm(FLP)} for all $X,Y\subseteq P$.
\end{theorem}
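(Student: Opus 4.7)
The plan is to verify directly that (FLP) is precisely the paraorthomodular law written out inside $\BDM(\mathbf P)$, once one computes the orthocomplement on $\BDM(\mathbf P)$ in terms of the data of $\mathbf P$. Recall that the elements of $\BDM(\mathbf P)$ are exactly the Galois-closed subsets $L(X)$ with $X\subseteq P$, that the order is set inclusion, that binary meets are intersections, and that the bottom element is $L(0)=\{0\}$.

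The key preliminary computation will be the identity
\[
L(X)^{\perp}=LU(X')\qquad\text{for every }X\subseteq P.
\]
To establish it, I would first check that $(L(X))'=U(X')$: the condition $s\le a$ for every $a\in X$ is equivalent, via the antitone involution, to $s'\ge a'$ for every $a\in X$, which is the defining property of $U(X')$. Hence, using the equality $S^{\perp}=L(S')$ from the preliminaries,
\[
L(X)^{\perp}=L((L(X))')=L(U(X'))=LU(X').
\]

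With this identification in hand, paraorthomodularity of $\BDM(\mathbf P)$ reads: for all $X,Y\subseteq P$, if $L(X)\subseteq L(Y)$ and $LU(X')\cap L(Y)=\{0\}$ then $L(X)=L(Y)$. Its contrapositive is exactly (FLP), which gives the right-to-left direction. For the converse, every pair of elements of $\BDM(\mathbf P)$ has the form $(L(X),L(Y))$ for some $X,Y\subseteq P$, so (FLP) applied to arbitrary subsets immediately yields the paraorthomodular law in $\BDM(\mathbf P)$.

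The only substantive step is the explicit identification of $L(X)^{\perp}$ with $LU(X')$; once it is in place, the theorem follows by bookkeeping. This route avoids Theorem~\ref{xTheorem4.4} altogether, although the same conclusion could alternatively be reached by proving that (FLP) is equivalent to weak $D$-continuity (WDC) and then invoking that theorem.
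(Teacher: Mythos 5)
Your proof is correct and follows essentially the same route as the paper's: both hinge on the identification $L(X)^{\perp}=L\bigl((L(X))'\bigr)=LU(X')$ and then read {\rm(FLP)} as the contrapositive of the paraorthomodular law in $\BDM(\mathbf P)$, whose elements are exactly the sets $L(X)$. The only cosmetic difference is that in the converse direction the paper instantiates $X:=U(A)$ for $A\in\DM(\mathbf P)$ and simplifies $LU(U(A)')=LUL(A')=L(A')=A^{\perp}$, whereas you simply choose any $X$ with $A=L(X)$.
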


\begin{proof}
If $\BDM(\mathbf P)$ is paraorthomodular, $X,Y\subseteq P$ and $L(X)\subsetneqq L(Y)$ then
\[
LU(X')\cap L(Y)=L(L(X)')\cap L(Y)=L(X)^\bot\cap L(Y)\neq\{0\}.
\]
If $\mathbf P$ satisfies (FLP) for all $X,Y\subseteq P$, $A,B\in\DM(\mathbf P)$, $A\subseteq B$ and $A^\bot\cap B=\{0\}$ then
\[
B\cap LU(U(A)')=B\cap LUL(A')=B\cap L(A')=B\cap A^\bot=\{0\}
\]
and hence $A=B$ proving paraorthomodularity of $\BDM(\mathbf P)$.
\end{proof}

Summarizing the previous results we obtain

\begin{corollary}\label{cparMNC}
For a bounded poset $\mathbf P=(P,\leq,{}',0,1)$ with an antitone involution $'$ the following conditions are equivalent:
\begin{enumerate}[{\rm(i)}]
\item $\BDM(\mathbf P)$ is paraorthomodular,
\item $\mathbf P$ is weakly $D$-continuous,
\item $\mathbf P$ satisfies {\rm(FLP)} for all $X,Y\subseteq P$.
\end{enumerate}
\end{corollary}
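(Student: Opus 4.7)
The proof is essentially a bookkeeping exercise: the corollary is a consolidation of two theorems already established in this section, so no new mathematical content is required.

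My plan is to simply cite the two previous results and chain them together. The equivalence (i) $\Leftrightarrow$ (ii) is precisely the content of Theorem~\ref{xTheorem4.4}, which states that $\BDM(\mathbf P)$ is paraorthomodular if and only if $\mathbf P$ is weakly $D$-continuous. The equivalence (i) $\Leftrightarrow$ (iii) is precisely the content of Theorem~\ref{parMNC}, which states that $\BDM(\mathbf P)$ is paraorthomodular if and only if $\mathbf P$ satisfies (FLP) for all $X,Y\subseteq P$. Combining these two equivalences via transitivity yields the desired three-way equivalence.

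Thus the proof will consist of one or two sentences invoking Theorems~\ref{xTheorem4.4} and~\ref{parMNC} and noting that condition (i) is the common pivot through which (ii) and (iii) are linked. There is no genuine obstacle here since both nontrivial directions in each equivalence have already been handled in the preceding theorems; the only minor care needed is to make sure the shared condition is (i) (paraorthomodularity of the Dedekind-MacNeille completion), which it is by inspection. The resulting proof is essentially a one-liner: ``This follows immediately from Theorem~\ref{xTheorem4.4} and Theorem~\ref{parMNC}.''
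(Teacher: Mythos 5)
Your proposal is correct and matches the paper exactly: the corollary is stated there with the preamble ``Summarizing the previous results we obtain,'' i.e.\ it is obtained by chaining Theorem~\ref{xTheorem4.4} (which gives (i)~$\Leftrightarrow$~(ii)) with Theorem~\ref{parMNC} (which gives (i)~$\Leftrightarrow$~(iii)) through the common condition~(i). Nothing further is needed.
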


{\bf Compliance with Ethical Standards.} The authors declare that they have no conflict of interest.

{\bf Acknowledgement.} Support of the research of the first and third author by the Austrian Science Fund (FWF), project I~4579-N, and the Czech Science Foundation (GA\v CR), project 20-09869L, entitled ``The many facets of orthomodularity'', as well as by \"OAD, project CZ~02/2019, entitled ``Function algebras and ordered structures related to logic and data fusion'', and, concerning the first author, by IGA, project P\v rF~2020~014, is gratefully acknowledged. D.~Fazio and A.~Ledda gratefully acknowledge the following funding sources: Regione Autonoma della Sardegna, within the project \textquotedblleft Per un'estensione semantica della Logica Computazionale Quantistica - Impatto teorico e ricadute implementative\textquotedblright, RAS: SR40341; MIUR, project PRIN 2017 \textquotedblleft Theory and applications of resource sensitive logics\textquotedblright, CUP: 20173WKCM5; \textquotedblleft Logic and cognition. Theory, experiments, and applications\textquotedblright, CUP: 2013YP4N3; Fondazione di Sardegna, within the project \textquotedblleft Resource sensitive reasoning and logic\textquotedblright, CUP: F72F20000410007. Research of the last author was supported by the project entitled ``Group Techniques and Quantum Information'', No.~MUNI/G/1211/2017, by Masaryk University Grant Agency (GAMU).


Authors' addresses:

Ivan Chajda \\
Palack\'y University Olomouc \\
Faculty of Science \\
Department of Algebra and Geometry \\
17.\ listopadu 12 \\
771 46 Olomouc \\
Czech Republic \\
ivan.chajda@upol.cz

Davide Fazio \\
A.Lo.P.Hi.S Research Group\\
University of Cagliari\\
Via Is Mirrionis, 1\\
09123, Cagliari, Italy\\
dav.faz@hotmail.it

Helmut L\"anger \\
TU Wien \\
Faculty of Mathematics and Geoinformation \\
Institute of Discrete Mathematics and Geometry \\
Wiedner Hauptstra\ss e 8-10 \\
1040 Vienna \\
Austria, and \\
Palack\'y University Olomouc \\
Faculty of Science \\
Department of Algebra and Geometry \\
17.\ listopadu 12 \\
771 46 Olomouc \\
Czech Republic \\
helmut.laenger@tuwien.ac.at

Antonio Ledda \\
A.Lo.P.Hi.S Research Group\\
University of Cagliari\\
Via Is Mirrionis, 1\\
09123, Cagliari, Italy\\
antonio.ledda@unica.it

Jan Paseka \\
Masaryk University Brno \\
Faculty of Science \\
Department of Mathematics and Statistics \\
Kotl\'a\v rsk\'a 2 \\
611 37 Brno \\
Czech Republic \\
paseka@math.muni.cz
\end{document}